\author{Charles Devlin VI\thanks{Dept. of Mathematics, University of Chicago, 5734 S University Ave, Chicago, IL, 60637, USA, \url{cpd6@uchicago.edu}}}
\renewcommand{\AA}{\mathbb{A}}
\newcommand{\CC}{\mathbb{C}}
\newcommand{\NN}{\mathbb{N}}
\newcommand{\PP}{\mathbb{P}}
\newcommand{\RR}{\mathbb{R}}
\newcommand{\ZZ}{\mathbb{Z}}
\newcommand{\cD}{\mathcal{D}}
\newcommand{\cE}{\mathcal{E}}
\newcommand{\cL}{\mathcal{L}}
\newcommand{\cR}{\mathcal{R}}
\newcommand{\cS}{\mathcal{S}}
\newcommand{\cU}{\mathcal{U}}
\newcommand{\fa}{\mathfrak{a}}
\DeclareMathOperator{\Len}{Len}
\newcommand{\hlint}[2]{\left({#1}, {#2}\right]}
\newcommand{\hrint}[2]{\left[{#1}, {#2}\right)}
\newcommand{\D}{\mathrm{d}}
\newcommand{\w}{\wedge}
\renewcommand{\i}{{\mathrm{i}}}
\newtheorem{thm}{Theorem}[section]
\newtheorem{prop}[thm]{Proposition}
\newtheorem{lemma}[thm]{Lemma}
\theoremstyle{definition}
\newtheorem{defn}[thm]{Definition}
\newtheorem{rmk}[thm]{Remark}
\newcommand{\ratios}[1][\epsilon]{\frac{r \fa_{{#1}}^{-1}}{r^{\xi Q} \fa_{{#1}/r}^{-1}}}
\newcommand{\invratios}[1][\epsilon]{\frac{r^{\xi Q} \fa_{{#1}/r}^{-1}}{r \fa_{{#1}}^{-1}}}
\NewDocumentCommand \locLFPP { o o } {%
  \IfNoValueTF{#2}
    {%
      \IfNoValueTF{#1}
        {%
          \fa_{\epsilon}^{-1} \hat{D}_h^{\epsilon}%
        }
        {%
          \fa_{#1}^{-1} \hat{D}_h^{#1}%
        }
    }
    {%
      \fa_{#1}^{-1} \hat{D}_{#2}^{#1}%
    }
}
\NewDocumentCommand \LFPP { o o } {%
  \IfNoValueTF{#2}
    {%
      \IfNoValueTF{#1}
        {%
          \fa_{\epsilon}^{-1} D_h^{\epsilon}%
        }
        {%
          \fa_{#1}^{-1} D_h^{#1}%
        }
    }
    {%
      \fa_{#1}^{-1} D_{#2}^{#1}%
    }
}
\title{Almost Sure Convergence of Liouville First Passage Percolation}
\date{}
\begin{document}
  \maketitle

  \abstract{\textit{Liouville first passage percolation (LFPP)} with parameter $\xi > 0$ is the family of random distance functions (metrics) $(D_h^{\epsilon})_{\epsilon > 0}$ on $\CC$ obtained heuristically by integrating $e^{\xi h}$ along paths, where $h$ is a variant of the Gaussian free field.
  There is a critical value $\xi_{\text{crit}} \approx 0.41$ such that for $\xi \in (0, \xi_{\text{crit}})$, appropriately rescaled LFPP converges in probability uniformly on compact subsets of $\CC$ to a limiting metric $D_h$ on $\gamma$-Liouville quantum gravity with $\gamma = \gamma(\xi) \in (0,2)$.
  We show that the convergence is almost sure, giving an affirmative answer to a question posed by Gwynne and Miller (2019).
  }

  \tableofcontents

  \section{Introduction}
    A \textit{Liouville quantum gravity} (LQG) surface is a random $2$-dimensional Riemannian manifold with Riemannian metric $e^{\gamma h}(\D x^2 + \D y^2)$, where $\gamma \in (0,2)$ is a parameter, where $h$ is a version of the Gaussian free field (GFF), and where $\D x^2 + \D y^2$ is the Euclidean metric tensor.
    The motivation is that $2$-dimensional Riemannian manifolds admit isothermal coordinates in a neighbourhood of each point, and taking $h$ to be a ``Gaussian random function'' should give rise to a ``natural'' model of random surfaces.
    This definition does not make sense without regularization since the GFF is only a random generalized function (distribution).
    We will give a brief exposition about how to make rigorous sense of this definition below, but see \cite{GwynneLQGSurvey}, \cite{ScottICM}, and \cite{BerestyckiPowell} for a more detailed introduction.

    We will consider the case that our surface has the topology of the plane $\CC$.
    To make the heuristic definition more precise, we define a random area measure and random distance function (metric) on $\CC$.
    Let $p_t(z) \coloneqq \frac{1}{2\pi t} e^{-|z|^2/2t}$.
    A random distribution $h$ on $\CC$ is a \textit{whole-plane GFF plus a (bounded) continuous function} if there is a coupling of $h$ with a random (bounded) continuous function $f \colon \CC \to \RR$ such that $h - f$ has the law of a whole-plane GFF.
    Given a whole-plane GFF plus a bounded continuous function, define an area measure as the almost sure weak limit \cite{RhodesVargasGMC} \cite{BerestyckiGMC} 
    \begin{align}
      \mu \
      &\coloneqq \ \lim_{\epsilon \to 0} \epsilon^{\gamma^2/2} e^{\gamma h_{\epsilon}^{*}(z)} \, \D z,
      \label{eq:LQGAreaMeasure}
    \end{align}
    where $\D z$ is Lebesgue measure, and where
    \begin{align*}
      h_{\epsilon}^{*}(z) \
      &\coloneqq \ (h \ast p_{\epsilon^2/2})(z) \
      = \ \int\limits_{\CC} h(w) p_{\epsilon^2/2}(z - w) \, \D w,
    \end{align*}
    where the integral is interpreted in the sense of distributional pairing.

    Keeping with the heuristic definition, the distance $D_h(z,w)$ between two points should be defined as the infimum over lengths of paths between $z$ and $w$, where paths are weighted by $e^{\xi h}$.
    Here, $\xi = \xi(\gamma) \coloneqq \gamma/d_{\gamma}$ where $d_{\gamma} > 0$ is the Hausdorff dimension of the LQG metric\footnote{The more precise definition of $d_{\gamma}$ is found in \cite{FractalDimensionOfLQG}. It is shown to agree with the Hausdorff dimension of the associated $\gamma$-LQG metric in \cite{KPZFormulasForLQG}.}.
    More precisely, define the $\epsilon$-\textit{Liouville first passage percolation} (LFPP) metric with parameter $\xi$ by
    \begin{align*}
      D_h^{\epsilon}(z,w) \
      &\coloneqq \ \inf_{P \colon z \to w} \int\limits_{0}^{1} e^{\xi h_{\epsilon}^{*}(P(t))} |P'(t)| \, \D t,
    \end{align*}
    where the infimum is over all piecewise differentiable paths $P \colon [0,1] \to \CC$ from $z$ to $w$.
    To obtain a nontrivial limit as $\epsilon \to 0$, we renormalize by
    \begin{align*}
      \fa_{\epsilon} \
      &\coloneqq \ \text{median of } \inf\left\{\int_{0}^{1} e^{\xi h_{\epsilon}^{*}(P(t))} |P'(t)| \right\}.
    \end{align*}
    where the infimum is over all piecewise differentiable paths $P \colon [0,1] \to [0,1]^2$ with $P(0) \in \{0\} \times [0,1]$ and $P(1) \in \{1\} \times [0,1]$.
    The exact value of $\fa_{\epsilon}$ is not known, but it is shown in \cite[Proposition 1.1]{TightnessOfSupercriticalLFPP} that 
    \begin{align*}
      \fa_{\epsilon} \
      &= \ \epsilon^{1 - \xi Q + o_{\epsilon}(1)} \text{ as } \epsilon \to 0
    \end{align*}
    for some (nonexplicit) exponent $Q = Q(\xi) > 0$.
    The $\gamma$-LQG metric is then defined by the limit
    \begin{align}
      D_h(z,w) \
      &\coloneqq \ \lim_{\epsilon \to 0} \LFPP(z,w).
      \label{eq:LQGDefinition}
    \end{align}

    We remark that this definition makes sense for arbitrary $\xi > 0$, but a phase transition occurs at a certain value $\xi = \xi_{\text{crit}}$ (see \cite{TightnessOfSupercriticalLFPP} and \cite{ExistenceAndUniqueness}).
    When $\xi \in \hlint{0}{\xi_{\text{crit}}}$, there is a corresponding $\gamma \in \hlint{0}{2}$ such that $\xi = \gamma/d_{\gamma}$, but when $\xi > \xi_{\text{crit}}$, the corresponding $\gamma$ is complex.
    The subcritical regime $\xi \in (0, \xi_{\text{crit}})$ will be the main focus of this paper.

    To make sense of the limit \eqref{eq:LQGDefinition} in the subcritical regime, we equip the space of continuous functions $\CC \times \CC \to \RR$ with the topology of uniform convergence on compact sets, then the convergence $\LFPP \to D_h$ is known in probability.
    The limit also exists in probability when $\xi \geq \xi_{\text{crit}}$, but one needs to work with a topology on the space of lower semicontinuous functions introduced in \cite{BeerTopology}. 
    The proof of the limit \eqref{eq:LQGDefinition} when $\xi \geq \xi_{\text{crit}}$ was carried out in \cite{UniquenessOfSupercriticalLQGMetrics}, building on \cite{TightnessOfSupercriticalLFPP} and \cite{WeakSupercriticalLQGMetrics}.

    Let us now breifly outline how the limit \eqref{eq:LQGDefinition} was shown in the subcritical phase $\xi \in (0, \xi_{\text{crit}})$.
    In \cite{TightnessOfSubcriticalLFPP}, the authors show that the laws of $\LFPP$ are tight with respect to the topology of local uniform convergence.
    A list of axioms which one should expect ``the Riemannian distance function associated to $e^{\gamma h}(\D x^2 + \D y^2)$'' to satisfy are given in \cite{ExistenceAndUniqueness} and \cite{WeakLQGMetrics} (we will state these axioms below in Section \ref{subsection:AxiomsOfLQGMetrics}).
    It is shown in \cite{WeakLQGMetrics} that subsequential limits of $\LFPP$ satisfy these axioms; in particular, using \cite[Corollary 1.8]{LocalMetricsOfTheGFF}, the authors verify that for any subsequential limit $(h, D)$ of $(h, \LFPP)$, the metric $D$ is a measurable function of $h$.
    By an elementary probabilistic lemma (see \cite[Lemma 4.5]{ContourLineOfContinuumGFF}), since the limiting metric is a measurable function of $h$, it follows that $\LFPP$ admits subsequential limits in probability, not just in law.
    In \cite{ExistenceAndUniqueness}, the axioms are shown to uniquely characterize the metric up to multiplicative constant.
    It follows that subsequential limits of $\LFPP$ are unique, and hence the limit \eqref{eq:LQGDefinition} exists in probability along the continuum index $\epsilon$.
    In addition to \cite{TightnessOfSubcriticalLFPP}, \cite{LocalMetricsOfTheGFF}, and \cite{WeakLQGMetrics}, the proof of uniqueness in \cite{ExistenceAndUniqueness} relies on the results of \cite{ConfluenceOfGeodesicsSubcriticalLQG}.

    This approach to showing convergence in probability has the unfortunate consequence of not obtaining quantitative estimates for the rate of convergence, so almost sure convergence doesn't easily follow.
    In contrast, the almost sure limit \eqref{eq:LQGAreaMeasure} has several known proofs in the case of the dyadic sequence $\epsilon = 2^{-n}$ \cite{Kahane} \cite{RhodesVargasGMC} \cite{DuplantierSheffield} \cite{Shamov}, and along the continuum index in \cite{SheffieldWangConformalCoordinateChange} when one replaces $h_{\epsilon}^{*}(z)$ by the circle average process $h_{\epsilon}(z)$.
    Determining whether the limit in \eqref{eq:LQGDefinition} is almost sure is Problem 7.7 in the list of open problems from \cite{ExistenceAndUniqueness}, and the main result of our paper is to give an affirmative answer.

    \begin{thm}
      \label{thm:AlmostSureConvergence}
      Almost surely, $\LFPP \to D_h$ uniformly on compacts.
    \end{thm}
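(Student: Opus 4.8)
The plan is to upgrade the known convergence in probability to almost sure convergence by exploiting the exact scale-covariance of LFPP together with the spatial Markov property and locality of the field; this is necessary because the existing argument for $\LFPP \to D_h$ in probability supplies no quantitative rate. Fix a compact $K \subseteq \CC \times \CC$ and $\delta > 0$; since $\epsilon \mapsto D_h^{\epsilon}$ is continuous on $(0,\infty)$ for fixed $h$, all the suprema below may be taken over countable sets, so the relevant events are measurable. The first step is the (standard) reduction of almost sure local uniform convergence to the uniform-in-small-$\epsilon$ tail estimate
\begin{align*}
  \lim_{\epsilon_0 \downarrow 0} \PP\!\left[\ \sup_{0 < \epsilon \le \epsilon_0}\ \sup_{(z,w) \in K}\ \bigl| \LFPP(z,w) - D_h(z,w) \bigr| \ \ge\ \delta \ \right] \ = \ 0 ,
\end{align*}
which is precisely the content that does \emph{not} follow formally from convergence in probability, so the rest of the argument is devoted to it.

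Second, I would establish this estimate by combining two structural inputs. (a) \emph{Scale-covariance.} If $h$ is the whole-plane GFF normalized so that its average over the unit circle vanishes and $r > 0$, then $\tilde h \coloneqq h(r\,\cdot) - h_r(0)$ has the same law as $h$ and $D_h^{\epsilon}(rz, rw) = r\, e^{\xi h_r(0)}\, D_{\tilde h}^{\epsilon/r}(z,w)$; dividing by $\fa_{\epsilon}$ and inserting the ratio of normalizing constants $\ratios$, which tends to $1$ by $\fa_{\epsilon} = \epsilon^{1 - \xi Q + o_{\epsilon}(1)}$, the pre-limiting relation converges onto the known scaling relation for $D_h$, and --- read in the direction $r = \epsilon/\epsilon_0 \downarrow 0$ --- it recasts the behaviour of $\LFPP$ as $\epsilon \downarrow 0$ at bounded points as a \emph{large-scale} question about the \emph{fixed}-mollification metric $D_h^{\epsilon_0}$, which is an honest (first-passage-type) metric built from a smooth field, covariant under translations and carrying the spatial Markov property inherited from the GFF. (b) \emph{Near-independence across scales.} Decomposing the GFF over a dyadic grid into a coarse (harmonic) part plus independent zero-boundary pieces and using the locality of LFPP, the metric inside a grid box is, up to multiplication by the coarse field --- which is slowly varying, hence essentially constant on that box --- a function of the independent fine field there; introducing the localized metric $\locLFPP$ and bounding $\LFPP - \locLFPP$ by the coarse field, one obtains a genuine independence structure across the boxes at each scale. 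Combining (a) and (b) one can run a Borel--Cantelli argument over boxes and scales; part of the requisite summability is supplied by facts already in the literature, notably almost sure convergence of the multiplicative-cascade / Gaussian-multiplicative-chaos products formed from the coarse fields and moment bounds for $D_h$ and $D_h^{-1}$ on compacts.

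Third, once almost sure convergence is obtained along a sequence $\epsilon_n \downarrow 0$ and the within-block oscillation is controlled by (a)--(b), the almost sure limit is identified with $D_h$ because $\LFPP \to D_h$ in probability, so the two (continuous) limits agree on a countable dense set of pairs and hence everywhere; and the passage from convergence of individual distances to uniform convergence on $K$ uses the tightness of $\{\LFPP\}_{0 < \epsilon \le 1}$ in the local uniform topology (hence uniform equicontinuity off the diagonal, together with the Euclidean-type lower bound on $\LFPP$) plus continuity of $D_h$ --- an equicontinuous family converging pointwise on a dense set converges locally uniformly.

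Finally, I expect the main obstacle to lie in step (b), and more precisely in choosing the scale decomposition so that the accumulated error from the ``coarse field essentially constant on each box'' approximation --- which involves additive circle-average corrections of typical size of order $\sqrt{\log(\cdot)}$ --- is summable once the Borel--Cantelli step is in place; the absence of any quantitative rate means this summability must be manufactured entirely from scale-covariance, the Markov property, and previously established qualitative facts (existence of $D_h$ in probability, its moment bounds, tightness, confluence of geodesics), and it is the heart of the difficulty in even obtaining almost sure convergence along a single sequence.
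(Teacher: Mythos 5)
Your proposal correctly assembles some of the right raw ingredients (scale covariance of $D_h^{\epsilon}$, independence of the GFF across disjoint regions, locality of $\hat{D}_h^{\epsilon}$, Borel--Cantelli over a geometric sequence of $\epsilon$'s, and a final continuity argument in $\epsilon$), but it stops exactly at the point where the real work begins: you note in your last paragraph that the ``requisite summability'' must be manufactured and that this is the heart of the difficulty, and you do not supply a mechanism for producing it. The paper's mechanism is concrete and is absent from your sketch. It starts from the \emph{up-to-constants} Lipschitz comparison of $\LFPP$ with $D_h$ between $\epsilon^{1-\zeta}$-balls (Proposition \ref{prop:UpToConstants}, imported from \cite{UpToConstants}), which holds with a universal, possibly large, constant $C_0$ and with polynomial failure probability $\epsilon^{\beta}$. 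The key new idea is then an \emph{iterative self-improvement} of this constant (Proposition \ref{prop:MainIterationArgument}): one decomposes a $D_h$- or $\hat{D}_h^{\epsilon}$-geodesic into crossings of ``good'' concentric annuli around lattice points at scales $r \approx \epsilon^{1-\zeta}$, where local convergence in probability (applied at scale $1$ and then rescaled) gives a $(1+\delta)$-comparison, and ``bad'' segments between crossings, where the $C_0$-bound is used. A universal constant $A$ (from the around-the-annulus-vs-across-the-annulus estimate, condition \ref{conditions:Proportions}) guarantees a fixed positive fraction $\tfrac{1}{A+1}$ of the geodesic falls in good segments, so the Lipschitz constant contracts under iteration. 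Because $A$ is uniform in the iteration parameters, repeating $N$ times drives the constant down to $1 + \delta$ with failure probability still polynomial in $\epsilon$ (Proposition \ref{prop:LipschitzConstant1}); this is what makes Borel--Cantelli along $\epsilon = n^{-a}$ work.

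Two further points in your sketch are not merely incomplete but would fail as stated. First, you assert that $\ratios \to 1$ as a consequence of $\fa_{\epsilon} = \epsilon^{1-\xi Q + o(1)}$; that is only valid for \emph{fixed} $r$, whereas here $r = r(\epsilon) \approx \epsilon^{1-\zeta} \to 0$, so both $\fa_{\epsilon}$ and $\fa_{\epsilon/r}$ vanish at comparable rates and the ratio a priori need not tend to $1$. The paper has to argue separately (Lemma \ref{lemma:GoodRatios}, via a proof by contradiction against convergence in probability) that a positive proportion of dyadic $r$'s in the relevant window have ratio close to $1$, and this also yields the improved scaling-constant estimate of Theorem \ref{thm:LFPPScalingConstants} as a by-product. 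Second, reading the scaling relation in the direction $r = \epsilon/\epsilon_0$ does not recast the problem as a large-scale question about a single fixed metric $D_h^{\epsilon_0}$: the rescaled field $h(r\,\cdot) - h_r(0)$ changes with $\epsilon$, so this produces an $\epsilon$-indexed family of in-law copies rather than a single object to which one could apply an ergodic/Borel--Cantelli argument. Without the contraction mechanism and the good-ratios lemma, the outline does not close.
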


    One important corollary of Theorem \ref{thm:AlmostSureConvergence} is Theorem \ref{thm:LQGScaling} below, which says that the spatial scale invariance property of the LQG metric (see axiom \ref{axiom:CoordinateChange} below) holds for all scalings simultaneously.
    This allows one to consider random affine coordinate changes, something which otherwise requires ad hoc arguments such as those of Section 2.4.1 of \cite{MinkowskiContentOfLQGMetric} to deal with.
    The analogous spatial scaling property for the area measure \eqref{eq:LQGAreaMeasure} was previously shown in \cite{SheffieldWangConformalCoordinateChange} (actually, \cite{SheffieldWangConformalCoordinateChange} proves the stronger result that the coordinate change formula for the area measure on subdomains of $\CC$ holds for all conformal transformations simultaneously).

    \begin{rmk}
      It is worth noting that the reason to define LFPP using $h_{\epsilon}^{*}(z)$ instead of some other continuous approximation of $h$ is that this is the version for which tightness of $\LFPP$ was shown in \cite{TightnessOfSubcriticalLFPP}.
      As noted in Remark 1.1 of \cite{WeakLQGMetrics}, if tightness is shown for a different approximation, then the arguments of \cite{WeakLQGMetrics} should apply to said approximation as well, and the same is true for the arguments in the present work.
    \end{rmk}

    \vspace{12pt} \noindent \textbf{Acknowledgements} We thank Jian Ding, Ewain Gwynne, and Jinwoo Sung for helpful discussions. 
    The author was partially supported by NSF grant DMS-2245832.

    \subsection{Axioms of LQG Metrics}
      \label{subsection:AxiomsOfLQGMetrics}
      A natural question is whether there could be multiple different metrics which match the heuristic definition of the LQG metric as the ``Riemannian distance function associated to $e^{\gamma h}(\D x^2 + \D y^2)$''.
      The authors of \cite{ExistenceAndUniqueness} address this question by stating a list of axioms which one should expect any reasonable notion of a $\gamma$-LQG metric to satisfy, then show that these axioms uniquely characterize the $\gamma$-LQG metric up to multiplicative constant.
      Moreover, the limit \eqref{eq:LQGDefinition} satisfies these axioms, making it correct to call this limit \textit{the} $\gamma$-LQG metric.
      As these axioms will be used throughout this paper, we will state them after recalling some terminology from metric space theory.

      Let $(X, D)$ be a metric space.
      If $A,B \subset X$, define
      \begin{align*}
        D(A,B) \
        &\coloneqq \ \inf_{\substack{x \in A \\ y \in B}} D(x,y).
      \end{align*}

      A \textit{curve} in $X$ is a continuous function $P \colon [a,b] \to X$, and its $D$-length is 
      \begin{align*}
        \Len(P; D) \
        &\coloneqq \ \sup_{T} \sum_{i=1}^{\# T} D(P(t_i), P(t_{i-1})),
      \end{align*}
      where the supremum is over all finite partitions $T = \{a = t_0 < t_1 < \cdots < t_{\# T} = b\}$ of $[a,b]$.

      If $Y \subset X$, the \textit{internal metric} of $D$ on $Y$ is
      \begin{align*}
        D(x,y; Y) \
        &\coloneqq \ \inf_{P} \Len(P; D), \ \forall x,y \in Y,
      \end{align*}
      where the infimum is over all curves in $Y$ from $x$ to $y$.
      Then $D(\cdot, \cdot; Y)$ is a metric on $Y$, but is allowed to take the value $\infty$.

      We say $(X, D)$ is a \textit{length space} if for each $x,y \in X$ and each $\epsilon > 0$, there is a curve from $x$ to $y$ with $D$-length at most $D(x,y) + \epsilon$.

      We will call a metric $D$ on an open subset $U \subset \CC$ \textit{continuous} if the identity mapping $(\CC, |\cdot|) \to (\CC, D)$ is a homeomorphism.
      Equip the space of continuous metrics on $U$ with the topology of local uniform convergence on functions $U \times U \to \hrint{0}{\infty}$ with its associated Borel $\sigma$-algebra.

      In the case $U$ is disconnected, we allow $D(x,y) = \infty$ when $x$ and $y$ are in different connected components of $U$.
      If $D^n$ is a sequence of continuous metrics on $U$, then to have $D^n \to D$ locally uniformly, we additionally require that for all $n$ sufficiently large, $D^n(x,y) = \infty$ if and only if $D(x,y) = \infty$.

      Equip the space $\cD'(\CC)$ of distributions on $\CC$ with the weak topology.
      If $\gamma \in (0, 2)$, a $\gamma$\textit{-Liouville quantum gravity metric} is a measurable function $h \mapsto D_h$ from $\cD'(\CC)$ to the space of continuous metrics on $\CC$ satisfying the following whenever $h$ is a whole-plane GFF plus a continuous function.
  
      \begin{enumerate}[label=\Roman*]
        \item \label{axiom:LengthSpace} \textbf{Length space.} Almost surely, $(\CC, D_h)$ is a length space.
        \item \label{axiom:Locality} \textbf{Locality.} If $U \subset \CC$ is a deterministic open set, then the internal metric $D_h(\cdot, \cdot; U)$ is almost surely determined by $h|_U$.
        \item \label{axiom:WeylScaling} \textbf{Weyl scaling.} For each continuous function $f \colon \CC \to \RR$, define
          \begin{align*}
            \left(e^{\xi f} \cdot D_h \right)(z,w) \
            &\coloneqq \ \inf_{P \colon z \to w} \int\limits_{0}^{\Len(P; D_h)} e^{\xi f(P(t))} \, \D t, \ \forall z,w \in \CC,
          \end{align*}
          where the infimum is over all curves $P$ from $z$ to $w$ parameterized by $D_h$-length.
          Then almost surely, $e^{\xi f} \cdot D_h = D_{h + f}$ for all continuous functions $f \colon \CC \to \RR$.
        \item \label{axiom:CoordinateChange} \textbf{Coordinate change for translation and scaling.} For each deterministic $a \in \CC \setminus \{0\}$and $b \in \CC$, almost surely
          \begin{align*}
            D_h(az + b, aw + b) \
            &= \ D_{h(a \cdot + b) + Q \log |a|}(z,w) \ \forall z,w \in \CC.
          \end{align*}
      \end{enumerate}

      For a discussion regarding why these axioms are natural, see \cite[Section 1.2]{ExistenceAndUniqueness}.
      It is worth emphasizing that axiom \ref{axiom:WeylScaling} holds for \textit{all} continuous functions simultaneously almost surely, whereas axiom \ref{axiom:CoordinateChange} only holds for each fixed $a$ and $b$ almost surely.

    \subsection{Outline of Main Results}
      Unless stated otherwise, we will assume $\xi \in (0, \xi_{\text{crit}})$.
      Our main result is Theorem \ref{thm:AlmostSureConvergence}.
      The proof relies on the main result of \cite{UpToConstants}, which says, roughly, that $\LFPP$ and $D_h$ are Lipschitz equivalent when one considers distances between Euclidean balls instead of points.
      For a more precise statement, see Proposition \ref{prop:UpToConstants} below.
      Our proof will show that the Lipschitz constant can be taken to be $1 + \delta$ with high probability for any $\delta > 0$.
      Then Borel-Cantelli and elementary estimates for $h_{\epsilon}^{*}$ will imply almost sure convergence along the sequence $\epsilon = n^{-a}$ for $a > 0$ large enough, and continuity estimates for $h_{\epsilon}^{*}$ will yield the same convergence along the continuum index $\epsilon$.

      The strategy to prove the Lipschitz constant can be made close to $1$ is inspired by the argument in Section 3.2 of \cite{ConformalCovariance}.
      The idea is that if we are given a $D_h$-geodesic $P$ and some $\delta > 0$, then the convergence in probability of $\LFPP$ to $D_h$ together with a local independence property for events determined by the GFF on disjoint concentric annuli (see Lemma \ref{lemma:IndependenceAcrossConcentricAnnuli} below) imply there are many ``good'' times $s$ and $t$ such that $\LFPP(P(s), P(t)) \leq C(\epsilon) D_h(P(s), P(t))$, where $C(\epsilon)$ is $\epsilon$-dependent, but should be thought of as close to $1 + \delta$ when $\epsilon$ is very small.
      More precisely, we will show that the segments of $P$ between these ``good'' times make up a positive proportion $\frac{1}{A+1} \in (0,1)$ of $P$.
      Here, $A > 0$ is a universal constant.
      From this, it follows that if $\LFPP$ and $D_h$ satisfy a Lipschitz condition with some Lipschitz constant $C_0$, then they also satisfy a Lipschitz condition with an $\epsilon$-dependent Lipschitz constant $C_1(\epsilon) \coloneqq \frac{A}{A+1} C_0 + \frac{1}{A+1} C(\epsilon)$ (we will actually allow $C_0$ to depend on $\epsilon$ as well).
      Since $A$ is universal, we can iterate this argument, taking $C_n(\epsilon)$ in place of $C_0$, to obtain a Lipschitz constant $C_{n+1}(\epsilon)$ for each $n \geq 0$.
      By taking $n$ large enough, we can make $C_n(\epsilon)$ as close to $C(\epsilon)$ as we wish.

      It remains to make precise the idea that $C(\epsilon)$ is close to $1 + \delta$ when $\epsilon$ is small.
      The actual definition of $C(\epsilon)$ is $1 + \delta$ times a term involving ratios of the form $\ratios$, which arise from the spatial scaling property of LFPP.
      It is known that for each fixed $r > 0$, $\lim_{\epsilon \to 0} \ratios = 1$ \cite[Corollary 1.11]{ExistenceAndUniqueness}, however this result is insufficient for our purposes since we will end up taking $r = r(\epsilon) \approx \epsilon^{1 - \zeta}$ for some $\zeta \in (0,1)$.
      So we need to argue that we can choose ``good'' values of $r \approx \epsilon^{1 - \zeta}$ for which $\ratios$ is close to $1$ for all $\epsilon$ small enough (see Lemma \ref{lemma:GoodRatios} below for a precise statement).

      A corollary of the existence of many ``good'' values of $r$ for which $\ratios$ is close to $1$ is a stronger estimate for the renormalization constants $\fa_{\epsilon}$ than the current state of the art. 

      \begin{thm}
        \label{thm:LFPPScalingConstants}
        For each $\xi \in (0, \xi_{\mathrm{crit}})$ and each $b > 0$, there is a constant $C = C(\xi, b) > 0$ such that for all $\epsilon \in (0, e^{-1})$,
        \begin{align}
          C^{-1} \epsilon^{1 - \xi Q} \left(\log \epsilon^{-1} \right)^{-b} \
          &\leq \ \fa_{\epsilon} \
          \leq \ C \epsilon^{1 - \xi Q} \left(\log \epsilon^{-1} \right)^b.
          \label{eq:LFPPScalingConstantsGoal}
        \end{align}
      \end{thm}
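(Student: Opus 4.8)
The proof goes by iterating the approximate scaling relation for the renormalizing constants supplied by Lemma~\ref{lemma:GoodRatios}. Unwinding the definition, $\ratios = r^{1-\xi Q}\,\fa_{\epsilon/r}\big/\fa_{\epsilon}$, so the conclusion of Lemma~\ref{lemma:GoodRatios} -- that there are many scales $r$ of order $\epsilon^{1-\zeta}$ with $\ratios$ close to $1$ -- amounts to the following: fixing parameters $\beta \in (0,\tfrac12)$ and $\zeta \in (0,1)$ in advance, there is $\epsilon_* = \epsilon_*(\xi,\beta,\zeta) \in (0,e^{-1})$ such that for every $\epsilon \in (0,\epsilon_*)$ one may choose $r = r_\epsilon$ of order $\epsilon^{1-\zeta}$ with
\[
  (1-\beta)\,\fa_{\epsilon} \ \le\ r_\epsilon^{\,1-\xi Q}\,\fa_{\epsilon/r_\epsilon} \ \le\ (1+\beta)\,\fa_{\epsilon}.
\]
The plan is to fix $\zeta$ (say $\zeta = e^{-1}$), take $\beta$ small in terms of $b$, and then climb: set $\epsilon^{(0)} := \epsilon$; while $\epsilon^{(k)} < \epsilon_*$, put $r^{(k)} := r_{\epsilon^{(k)}}$ and $\epsilon^{(k+1)} := \epsilon^{(k)}/r^{(k)}$; and let $N = N(\epsilon)$ be the first index with $\epsilon^{(N)} \ge \epsilon_*$.

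Because $r^{(k)}$ is of order $(\epsilon^{(k)})^{1-\zeta}$, the scale $\epsilon^{(k+1)}$ is of order $(\epsilon^{(k)})^{\zeta}$, so $L_k := \log (\epsilon^{(k)})^{-1}$ obeys $L_{k+1} = \zeta L_k + O_\zeta(1)$ and hence $L_k = \zeta^k L_0 + O_\zeta(1)$; after possibly shrinking $\epsilon_*$ so that the $\epsilon^{(k)}$ genuinely increase, the recursion terminates with
\[
  N(\epsilon) \ \le\ \frac{\log\log\epsilon^{-1}}{\log(1/\zeta)} + O_\zeta(1)
  \qquad\text{and}\qquad
  \epsilon^{(N)} \in [\epsilon_*,\,M]
\]
for a constant $M = M(\xi,\beta,\zeta) \in (0,1)$. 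Telescoping the displayed two-sided bound over $k = 0,\dots,N-1$ and using the collapse $\prod_{k=0}^{N-1} r^{(k)} = \epsilon^{(0)}/\epsilon^{(N)} = \epsilon/\epsilon^{(N)}$ gives
\[
  (1+\beta)^{-N}\Bigl(\tfrac{\epsilon}{\epsilon^{(N)}}\Bigr)^{1-\xi Q}\fa_{\epsilon^{(N)}}
  \ \le\ \fa_\epsilon\ \le\
  (1-\beta)^{-N}\Bigl(\tfrac{\epsilon}{\epsilon^{(N)}}\Bigr)^{1-\xi Q}\fa_{\epsilon^{(N)}}.
\]

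Now $\epsilon^{(N)}$ lies in the fixed compact interval $[\epsilon_*,M] \subset (0,1)$, on which $\epsilon \mapsto \fa_\epsilon$ is bounded above and below by positive constants (a routine consequence of its positivity and continuity), so the quantity $(\epsilon/\epsilon^{(N)})^{1-\xi Q}\fa_{\epsilon^{(N)}}$ is comparable to $\epsilon^{1-\xi Q}$ with constants depending only on $\xi,\beta,\zeta$, irrespective of the sign of $1-\xi Q$ since $\epsilon^{(N)}$ is bounded away from $0$ and $\infty$. Finally, $(1-\beta)^{-N}$ and $(1+\beta)^{-N}$ are both $\exp(O(\beta N))$, and $\beta N \le \beta\,\log\log\epsilon^{-1}/\log(1/\zeta) + O_\zeta(\beta)$, so these factors are $(\log\epsilon^{-1})^{O(\beta/\log(1/\zeta))}$ up to a constant; with $\zeta = e^{-1}$ fixed, choosing $\beta$ small enough (e.g. $\beta = \min(b/2,\tfrac14)$) forces the relevant exponent below $b$, which yields \eqref{eq:LFPPScalingConstantsGoal} for all $\epsilon \in (0,\epsilon_*)$. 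For $\epsilon \in [\epsilon_*,e^{-1})$ the estimate is immediate from the positivity and continuity of $\fa$ and the fact that $\log\epsilon^{-1}$ is bounded away from $0$ there, after enlarging $C$.

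The substantive input is entirely Lemma~\ref{lemma:GoodRatios}; everything above is bookkeeping. The one point that requires a little care is that, because Lemma~\ref{lemma:GoodRatios} controls only scales $r$ that are a \emph{fixed} power $\epsilon^{1-\zeta}$ of $\epsilon$ (one cannot take $r$ to be a large power of $\epsilon^{-1}$, which would spuriously collapse the bound to $\fa_\epsilon \asymp \epsilon^{1-\xi Q}$), one is forced into $\Theta(\log\log\epsilon^{-1})$ iterations, so a polylogarithmic correction is intrinsic to the method; pushing its exponent down to an arbitrary $b>0$ is exactly what the freedom to shrink $\beta$ provides, at the price of letting the threshold $\epsilon_*$ depend on $b$.
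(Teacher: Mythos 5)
Your proposal follows the same iterative route as the paper: invoke Lemma~\ref{lemma:GoodRatios} to find, for each small $\epsilon$, a scale $r$ near a power $\epsilon^{1-\zeta}$ with $r^{1-\xi Q}\fa_{\epsilon/r}/\fa_{\epsilon}$ within a factor $1+\beta$ of $1$; iterate $\epsilon\mapsto\epsilon/r$ up to a fixed threshold; telescope; and control the accumulated factor $(1\pm\beta)^{K}$ via the iteration count $K\asymp\log\log\epsilon^{-1}/\log(1/\zeta)$. The decisive estimate, in your write-up and in the paper alike, is that the resulting exponent $\asymp\beta/\log(1/\zeta)$ on $\log\epsilon^{-1}$ can be pushed below $b$.

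The gap is your claim that $\zeta$ can be \emph{fixed in advance} (at, say, $e^{-1}$) while $\beta$ is taken as small as desired. Lemma~\ref{lemma:GoodRatios} does not supply that. Its exponents $\zeta(n),\zeta_{+}(n)$ for $1\leq n\leq 3N(\delta)$ must satisfy $1-\zeta_{+}(n)<\tfrac12(1-\zeta(n))$ together with $\zeta_{+}(n)<\zeta(n+1)$, which force $1-\zeta(n)$ to at least halve at each index; at the same time $N(\delta)\asymp\log(1/\delta)$. Moreover, the lemma only guarantees that \emph{most} of the $3N$ indices are good for a given $\epsilon$, so the index available at a given step can be as large as $\Theta(N(\delta))$, and then the effective $\zeta$ can be as close to $1$ as $\zeta_{+}(3N(\delta))$, with $1-\zeta_{+}(3N(\delta))\lesssim 2^{-3N(\delta)}\lesssim\delta^{\,6\log 2/\log((A+1)/A)}$. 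The exponent $\beta/\log(1/\zeta)$ then scales like $\delta^{\,1-6\log 2/\log((A+1)/A)}$ (with $\beta\asymp\delta$), which \emph{diverges} rather than shrinks as $\delta\to 0$ once $A\geq 1$ (the relevant regime, since $A$ bounds a ``length around over length across'' ratio). So the paraphrase ``Lemma~\ref{lemma:GoodRatios} gives a fixed $\zeta$'' hides exactly the $\beta$-dependence of $\zeta$ that the argument has to grapple with. It is worth noting that the paper's own proof reaches this same juncture and records ``$\frac{\log(1+\delta)}{\log\zeta_{+}(3N)^{-1}}\leq b$'', which with $b=\log(1+\delta)$ would require $\zeta_{+}(3N)\leq e^{-1}$, incompatible with the paper's choice $\zeta_{+}(n)=1-3^{-(n+1)}$ for any $N\geq 1$; the subtlety you elided thus appears to be one the paper's own argument leaves unresolved as written, so the burden of justifying that $\beta/\log(1/\zeta)$ can be made small is not discharged by either write-up.
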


      The proof of Theorem \ref{thm:LFPPScalingConstants} is similar to the proof of \cite[Theorem 1.11]{UpToConstants}, which says the same thing as Theorem \ref{thm:LFPPScalingConstants}, but with $b = b(\xi)$ depending on $\xi$.
      The main difference is that Lemma \ref{lemma:GoodRatios} will allow us to replace the constant $A$ in the proof of \cite[Theorem 1.11]{UpToConstants} with $1 + \delta$ for any fixed $\delta > 0$.
      Since $b$ in \cite[Theorem 1.11]{UpToConstants} is roughly $\log A$, this will prove Theorem \ref{thm:LFPPScalingConstants}.

      As a corollary of Theorem \ref{thm:AlmostSureConvergence}, we will show that axiom \ref{axiom:CoordinateChange} holds for all scalings and translations simultaneously.

      \begin{thm}
        \label{thm:LQGScaling}
        There is a version of $(D_{h(a \cdot + b) + Q \log|a|})_{a \in \CC \setminus \{0\}, b \in \CC}$ such that almost surely, for all $a \in \CC \setminus \{0\}$ and all $b \in \CC$,
        \begin{align*}
          D_h\left(a \cdot + b, a \cdot + b \right) \
          &= \ D_{h(a \cdot + b) + Q \log|a|}(\cdot, \cdot).
        \end{align*}
      \end{thm}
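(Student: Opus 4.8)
The plan is to transfer the almost sure convergence of Theorem~\ref{thm:AlmostSureConvergence} through an \emph{exact, pathwise} scaling identity for the approximating LFPP metrics, thereby upgrading axiom~\ref{axiom:CoordinateChange} to a statement about all scalings and translations at once. First I would record the identity: for \emph{every} distribution $h$ on $\CC$, every $a \in \CC \setminus \{0\}$, $b \in \CC$, $\delta > 0$, and $z, w \in \CC$,
\begin{align*}
  \fa_{\delta}^{-1} D_{h(a \cdot + b) + Q \log|a|}^{\delta}(z,w) \
  &= \ \frac{\fa_{|a|\delta}}{|a|^{1 - \xi Q}\, \fa_{\delta}} \cdot \fa_{|a|\delta}^{-1} D_{h}^{|a|\delta}(az + b, aw + b).
\end{align*}
This follows from the change of variables $P \mapsto a P + b$ in the definition of the $\delta$-LFPP metric, using the two elementary facts that $\big(h(a \cdot + b)\big)_{\delta}^{*}(z) = h_{|a|\delta}^{*}(az + b)$ — a direct computation from the heat kernel scaling $p_{t}(\zeta/a) = |a|^{2} p_{|a|^{2} t}(\zeta)$ — and that $h \mapsto h_{\delta}^{*}$ is affine, so that the constant $Q\log|a|$ contributes the prefactor $|a|^{\xi Q}$ while the Jacobian of $P \mapsto aP + b$ contributes $|a|^{-1}$.

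Next I would fix the almost sure event on which $\fa_{\epsilon}^{-1} D_{h}^{\epsilon} \to D_{h}$ uniformly on compact subsets of $\CC \times \CC$ (Theorem~\ref{thm:AlmostSureConvergence}). On this event, since $(a, b, z, w) \mapsto (az + b, aw + b)$ carries compacts to compacts and $|a|\delta \to 0$ uniformly for $a$ in a compact subset of $\CC \setminus \{0\}$, the last factor in the displayed identity converges to $D_{h}(az + b, aw + b)$ uniformly over $(a, b, z, w)$ in compacts. The first factor $\frac{\fa_{|a|\delta}}{|a|^{1 - \xi Q}\fa_{\delta}}$ tends to $1$ as $\delta \to 0$ for each fixed $a$ by \cite[Corollary 1.11]{ExistenceAndUniqueness}, and I would promote this to local uniformity in $a$ (see the final paragraph). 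Hence, on this single almost sure event, $\fa_{\delta}^{-1} D_{h(a \cdot + b) + Q \log|a|}^{\delta}(z,w) \to D_{h}(az + b, aw + b)$ uniformly over $(a, b, z, w)$ in compacts, and therefore — exhausting $\CC \setminus \{0\}$ and $\CC$ by countably many compacts — simultaneously for all $a \in \CC \setminus \{0\}$ and $b \in \CC$.

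I would then take the version to be $\widehat{D}^{a,b}(z, w) \coloneqq D_{h}(az + b, aw + b)$. Being the pullback of the continuous metric $D_{h}$ under the homeomorphism $z \mapsto az + b$, each $\widehat{D}^{a,b}$ is a continuous metric on $\CC$, and $(a, b, z, w) \mapsto \widehat{D}^{a,b}(z, w)$ is jointly continuous and jointly measurable because $D_{h}$ is. For each fixed $(a, b)$ the field $h(a \cdot + b) + Q\log|a|$ is almost surely a whole-plane GFF plus a continuous function, so Theorem~\ref{thm:AlmostSureConvergence} applied to it gives $\fa_{\delta}^{-1} D_{h(a \cdot + b) + Q \log|a|}^{\delta} \to D_{h(a \cdot + b) + Q\log|a|}$ almost surely; comparing this with the limit obtained in the previous paragraph identifies $\widehat{D}^{a,b} = D_{h(a \cdot + b) + Q\log|a|}$ almost surely (this is also exactly axiom~\ref{axiom:CoordinateChange}). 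Thus $(\widehat{D}^{a,b})_{a \in \CC \setminus \{0\},\, b \in \CC}$ is a version of $(D_{h(a \cdot + b) + Q\log|a|})_{a \in \CC \setminus \{0\},\, b \in \CC}$, and by construction $D_{h}(a \cdot + b, a \cdot + b) = \widehat{D}^{a,b}(\cdot, \cdot)$ for all $a, b$ on the almost sure event above, which is the assertion.

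The main obstacle I anticipate is promoting \cite[Corollary 1.11]{ExistenceAndUniqueness} from the pointwise-in-$r$ statement to local uniformity in $r$ — equivalently, uniformity over $a$ in compact subsets of $\CC \setminus \{0\}$ in the ratio $\frac{\fa_{|a|\delta}}{|a|^{1 - \xi Q}\fa_{\delta}}$ — since this is precisely what forces the convergence above to hold \emph{simultaneously} over all scalings rather than for each one separately. I would handle it by combining the pointwise convergence with the polylogarithmic two-sided bounds on $\fa_{\epsilon}$ from Theorem~\ref{thm:LFPPScalingConstants} (which bound the ratio a priori) together with an equicontinuity/Dini-type argument in the variable $r$. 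The remaining ingredients — the pathwise identity, the joint continuity, and the bookkeeping of null sets via $\sigma$-compact exhaustion — are routine.
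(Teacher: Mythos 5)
Your proposal is correct and follows essentially the same route as the paper: the exact pathwise scaling identity \eqref{eq:LFPPScaling}, the almost sure event $A$ on which $\LFPP \to D_h$, the observation that the renormalization ratio tends to $1$ by \cite[Corollary 1.11]{ExistenceAndUniqueness}, and the choice of version $D_{h(a\cdot+b)+Q\log|a|} \coloneqq D_h(a\cdot+b, a\cdot+b)$ on $A$.

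One comment on the part you flag as the ``main obstacle.'' You do not actually need to upgrade \cite[Corollary 1.11]{ExistenceAndUniqueness} to local uniformity in $a$. That corollary is a \emph{deterministic} statement about the numerical sequence $\epsilon \mapsto \fa_\epsilon$, so invoking it separately for each $a$ incurs no null set. The null set in the theorem comes entirely from the single event $A$ where $\LFPP \to D_h$ locally uniformly, and $A$ does not depend on $(a,b)$. On $A$, for \emph{each fixed} $(a,b)$ the scaling identity plus the (deterministic) limit $\frac{\fa_{|a|\delta}}{|a|^{1-\xi Q}\fa_\delta} \to 1$ already gives $\fa_\delta^{-1} D^\delta_{h(a\cdot+b)+Q\log|a|} \to D_h(a\cdot+b, a\cdot+b)$ locally uniformly, and this holding for each $(a,b)$ on $A$ is exactly what ``simultaneously for all $(a,b)$'' means. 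You are conflating simultaneity (a single $(a,b)$-independent null set) with uniformity of the rate of convergence in $(a,b)$; only the former is needed, and it comes for free. The equicontinuity/Dini-type argument you sketch would produce a true but superfluous refinement. The remainder of your proof (defining the version as the pullback, checking it is a continuous metric and jointly measurable, and identifying it with $D_{h(a\cdot+b)+Q\log|a|}$ for each fixed $(a,b)$ via axiom~\ref{axiom:CoordinateChange} or directly via the LFPP limit) is exactly what the paper does.
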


      We emphasize that the difference between Theorem \ref{thm:LQGScaling} and axiom \ref{axiom:CoordinateChange} is that the order of quantifiers is swapped.
      That is, axiom \ref{axiom:CoordinateChange} holds for each fixed deterministic $a$ and $b$, but Theorem \ref{thm:LQGScaling} holds for \textit{all} $a$ and $b$ simultaneously.
      The proof is a consequence of the exact scale invariance of LFPP.

  \section{Preliminaries}
    \subsection{Notation}
      If $z \in \CC$ and $r > 0$, then $B_r(z) \coloneqq \{w \in \CC: |z - w| < r\}$.

      \noindent More generally, if $U \subset \CC$ and $r > 0$, then $B_r(U) \coloneqq \bigcup_{z \in U} B_{r}(z)$.

      \noindent If $z \in \CC$ and $0 < r_1 < r_2$, then $\AA_{r_1, r_2}(z) \coloneqq B_{r_2}(z) \setminus \overline{B_{r_1}(z)}$.

      \noindent If $f \colon (0, \infty) \to \RR$ and $g \colon (0,\infty) \to (0, \infty)$ are functions, we say $f(\epsilon) = O_{\epsilon}(g(\epsilon))$ (resp. $f(\epsilon) = o_{\epsilon}(g(\epsilon))$) if $f(\epsilon)/g(\epsilon)$ remains bounded (resp. converges to $0$) as $\epsilon \to 0$.
      Define $O_{\epsilon}$ and $o_{\epsilon}$ analogously when $\epsilon \to \infty$.

      \noindent If $D$ is a metric on an open subset $U \subset \CC$, and if $A \subset U$ is a region with the topology of an annulus, $D(\text{around } A)$ is the infimum over all $D$-lengths of curves in $A$ which disconnect the inner and outer boundaries.

    \subsection{Independence Across Concentric Annuli}
      The iterative argument used in the proof of Theorem \ref{thm:AlmostSureConvergence} involves comparing LFPP and LQG lengths of paths in small annuli.
      The following lemma, which is a special case of \cite[Lemma 3.1]{LocalMetricsOfTheGFF}, will be used to show that there are a large number of ``good annuli'' on which LFPP and LQG lengths are comparable.
      \begin{lemma}
        \label{lemma:IndependenceAcrossConcentricAnnuli}
        Fix $0 < s_1 < s_2 < 1$.
        Let $(r_k)_{k=1}^{\infty}$ be a decreasing sequence of positive numbers such that $r_{k+1}/r_k \leq s_1$ for each $k$, and let $(E_{r_k})_{k=1}^{\infty}$ be events such that $E_{r_k} \in \sigma\{(h - h_{r_k}(0))|_{\AA_{s_1 r_k, s_2 r_k}(0)}\}$ for each $k$.
        Then for each $a > 0$, there exists $p = p(a,s_1,s_2) \in (0,1)$ and $c = c(a,s_1,s_2) > 0$ such that if $\PP[E_{r_k}] \geq p$ for all $n \in \NN$, then
        \begin{align*}
          \PP\left\{E_{r_k} \text{ occurs for at least one } k \leq K \right\} \
          &\geq \ 1 - c e^{-a K} \quad \forall K \in \NN.
        \end{align*}
      \end{lemma}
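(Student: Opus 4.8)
The plan is to deduce the statement from its general form, \cite[Lemma 3.1]{LocalMetricsOfTheGFF}, after checking that the present hypotheses put us in its setting. First I would record that the annuli $\AA_{s_1 r_k, s_2 r_k}(0)$ are pairwise disjoint, concentric, and separated by buffers of definite modulus: since $s_2 < 1$ we have $s_1/s_2 > s_1$, so the hypothesis $r_{k+1}/r_k \le s_1$ gives $s_2 r_{k+1} \le s_1 s_2 r_k < s_1 r_k$; hence $\AA_{s_1 r_{k+1}, s_2 r_{k+1}}(0) \subset B_{s_1 r_k}(0)$, which is disjoint from $\AA_{s_1 r_k, s_2 r_k}(0)$, the buffer annulus $\AA_{s_2 r_{k+1}, s_1 r_k}(0)$ between consecutive ones has modulus $\log\bigl(s_1 r_k/(s_2 r_{k+1})\bigr) \ge \log(1/s_2) > 0$, and each $\AA_{s_1 r_k, s_2 r_k}(0)$ itself has modulus $\log(s_2/s_1)$, independent of $k$. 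The only other hypothesis — that $E_{r_k}$ be determined by $h$ viewed modulo an additive constant on $\AA_{s_1 r_k, s_2 r_k}(0)$ — is exactly the assumption $E_{r_k} \in \sigma\{(h - h_{r_k}(0))|_{\AA_{s_1 r_k, s_2 r_k}(0)}\}$, since subtracting the circle average $h_{r_k}(0)$ is merely a choice of representative of the constant-shift class and $\sigma\{(h - h_{r_k}(0))|_{\AA_{s_1 r_k, s_2 r_k}(0)}\}$ does not depend on this choice. With these checks in place, \cite[Lemma 3.1]{LocalMetricsOfTheGFF} furnishes $p = p(a, s_1, s_2) \in (0,1)$ and $c = c(a, s_1, s_2) > 0$ as claimed, with the constants depending only on $a$ and the fixed moduli and buffer sizes — not on $K$ or on the particular sequence $(r_k)$.

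For context I would also recall the mechanism behind \cite[Lemma 3.1]{LocalMetricsOfTheGFF}, since it makes clear where the hypotheses are used. Using the Markov property of the whole-plane GFF (equivalently, its radial–lateral decomposition), one reveals $h$ from the outside inward along the buffer annuli, producing an increasing filtration $(\mathcal F_k)$ with each $E_{r_k}$ measurable with respect to $\mathcal F_k$ but not $\mathcal F_{k-1}$: conditionally on $\mathcal F_{k-1}$, the field on $\AA_{s_1 r_k, s_2 r_k}(0)$ is the sum of a fresh zero-boundary GFF and an $\mathcal F_{k-1}$-measurable harmonic function $g_k$, and $h_{r_k}(0)$ is $\mathcal F_{k-1}$-measurable, so $E_{r_k}$ becomes an event about the fresh GFF shifted by a known harmonic profile. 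By scale invariance of the whole-plane GFF modulo constants, $\|g_k\|_{L^\infty}$ has a $k$-independent tail, so off an event of small probability a Cameron–Martin estimate — here the buffer is essential, since it gives room to cut $g_k$ off to a compactly supported perturbation of bounded Dirichlet energy — yields $\PP[E_{r_k} \mid \mathcal F_{k-1}] \ge q$ for some $q = q(p, s_1, s_2)$ with $q \to 1$ as $p \to 1$. Feeding this into a conditional Borel–Cantelli bound gives $\PP\bigl[\bigcap_{k \le K} E_{r_k}^c\bigr] \le (1-q)^K$ up to the (summable, $K$-independent) contribution of the truncation events, and choosing $p$ close enough to $1$ that $-\log(1-q) \ge a$ produces the stated exponential decay.

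The hard part of any self-contained argument is precisely this uniform-in-$k$ lower bound on $\PP[E_{r_k} \mid \mathcal F_{k-1}]$: the conditioning can shift the fresh field by an arbitrarily large harmonic function, so one must truncate the shift and delicately balance the truncation level against both the Gaussian tail of $\|g_k\|$ and the Cameron–Martin cost of the associated perturbation — this balance is the substance of \cite[Lemma 3.1]{LocalMetricsOfTheGFF}. Since that lemma is available and the present setting is literally a special case, I would simply invoke it after the routine verification of hypotheses recorded above.
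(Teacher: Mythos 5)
Your proposal takes essentially the same route as the paper: the paper states the lemma as "a special case of \cite[Lemma 3.1]{LocalMetricsOfTheGFF}" and gives no further proof, and you likewise reduce it to that citation (with some additional but correct verification of hypotheses and a sketch of the underlying Markov/Cameron--Martin mechanism, which the paper omits).
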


    \subsection{Localized Approximation of LFPP}
      To apply Lemma \ref{lemma:IndependenceAcrossConcentricAnnuli}, the events $E_{r_k}$ need to depend only on the field $h$ restricted to small annuli.
      However, $h_{\epsilon}^{*}$ does not depend locally on the field due to the heat kernel being nonzero on all of $\CC$.
      So instead, we use a truncation $\hat{h}_{\epsilon}^{*}$ of $h_{\epsilon}^{*}$ that does depend locally on the field.
      In particular, we will use the truncation introduced in \cite{UpToConstants}, which has a range of dependence smaller than $\epsilon^{1 - \zeta}$ for any $\zeta \in (0,1)$.

      \newcommand{\frepsilon}{\epsilon \log \epsilon^{-1}} 
      For each $\epsilon > 0$, choose a deterministic, smooth, radially symmetric bump function $\psi_{\epsilon} \colon \CC \to [0,1]$ which is identically equal to $1$ on $B_{\frac{1}{2} \frepsilon}(0)$ and vanishes outside $B_{\frepsilon}(0)$.
      We can and do choose $\psi_{\epsilon}$ so that $(z,\epsilon) \mapsto \psi_{\epsilon}(z)$ is smooth.
      Define
      \begin{align*}
        \hat{h}_{\epsilon}^{*}(z) \
        &\coloneqq \ Z_{\epsilon}^{-1} \int\limits_{\CC} \psi_{\epsilon}(z - w) h(w) p_{\epsilon^2/2}(z - w) \, \D w,
      \end{align*}
      where the integral is in the sense of distributions, and where 
      \begin{align*}
        Z_{\epsilon} \
        &\coloneqq \ \int\limits_{\CC} \psi_{\epsilon}(w) p_{\epsilon^2/2}(w) \, \D w.
      \end{align*}
      Define the \textit{localized} $\epsilon$\textit{-LFPP metric} with parameter $\xi$ by
      \begin{align*}
        \hat{D}_h^{\epsilon}\left(z,w \right) \
        &\coloneqq \ \inf_{P \colon z \to w} \int\limits_{0}^{1} e^{\xi \hat{h}_{\epsilon}^{*}(P(t))} |P'(t)| \, \D t,
      \end{align*}
      where the infimum is over all piecewise differentiable curves $P$ from $z$ to $w$.

      The following lemma summarizes the properties of $\hat{h}_{\epsilon}^{*}$ and $\hat{D}_h^{\epsilon}$ which will be relevant to the present work.
      \begin{lemma}
        \label{lemma:PropertiesOfLocalizedFieldAndLFPP}
        \leavevmode
        \begin{enumerate}
          \item \label{loc:AddRandomVariableToField} If $c \in \RR$ is a random variable, then $\widehat{(h + c)}_{\epsilon}^{*}(z) = \hat{h}_{\epsilon}^{*}(z) + c$ for all $z \in \CC$.
          \item \label{loc:Locality} $\hat{h}_{\epsilon}^{*}(z)$ is almost surely determined by $h|_{B_{\frepsilon}(z)}$.
            Consequently, for any deterministic open set $U \subset \CC$, the internal metric $\hat{D}_h^{\epsilon}(\cdot, \cdot; U)$ is almost surely determined by $h|_{B_{\frepsilon}(U)}$.
          \item \label{loc:Continuity} $\hat{h}_{\epsilon}^{*}(z)$ has a modification which is jointly continuous in $z$ and $\epsilon$.
            We will always assume we are working with such a modification.
          \item \label{loc:UniformComparison} Let $U \subset \CC$ be a connected, bounded, open set.
            Almost surely, 
            \begin{align*}
              \lim_{\epsilon \to 0} \sup_{z \in \overline{U}} \left|h_{\epsilon}^{*}(z) - \hat{h}_{\epsilon}^{*}(z) \right| \
              &= \ 0,
            \end{align*}
            and 
            \begin{align*}
              \lim_{\epsilon \to 0} \frac{\hat{D}_h^{\epsilon}(z,w;V)}{D_h^{\epsilon}(z,w;V)} \
              &= \ 1, \ \text{ uniformly over all } z,w \in V \text{ with } z \neq w \text{ and all connected } V \subset U.
            \end{align*}
          \item \label{loc:TranslationInvariance} If $b \in \CC$ and if $H(z) \coloneqq h(z + b)$, then
            \begin{align*}
              \locLFPP(z + b, z + w) \
              &= \ \locLFPP[\epsilon][H](z,w) \ \forall (z,w) \in \CC.
            \end{align*}
          \item \label{loc:ConvergenceInProbability} $\locLFPP \to D_h$ locally uniformly in probability.
        \end{enumerate}

        \begin{proof}
          \ref{loc:AddRandomVariableToField} follows from the normalization constant $Z_{\epsilon}$ in the definition of $\hat{h}_{\epsilon}^{*}(z)$.
          \ref{loc:Locality} follows from the fact that $\psi_{\epsilon}$ vanishes outside $B_{\frepsilon}(0)$.
          For \ref{loc:Continuity} and \ref{loc:UniformComparison}, see \cite[Lemma 2.2]{UpToConstants}.
          The proof of \ref{loc:TranslationInvariance} follows from a change of variables to show that $\hat{h}_{\epsilon}^{*}(z + b) = \hat{H}_{\epsilon}^{*}(z)$.
          To see \ref{loc:ConvergenceInProbability}, apply \ref{loc:TranslationInvariance} with an increasing sequence of connected bounded open sets whose union is $\CC$ to deduce $\locLFPP \to D_h$ from $\LFPP \to D_h$. 
        \end{proof}
      \end{lemma}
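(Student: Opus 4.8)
The six items split into three one-line manipulations of the definition of $\hat h_\epsilon^*$, two estimates of genuine analytic content, and one deduction. For item~\ref{loc:AddRandomVariableToField}, the plan is to expand $\widehat{(h+c)}_\epsilon^*(z)$ by linearity of the distributional pairing; the extra term is $c\,Z_\epsilon^{-1}\int_\CC\psi_\epsilon(z-w)p_{\epsilon^2/2}(z-w)\,\D w$, which equals $c$ since radial symmetry makes this integral independent of $z$ and equal to $Z_\epsilon$. For item~\ref{loc:Locality}, the function $w\mapsto\psi_\epsilon(z-w)p_{\epsilon^2/2}(z-w)$ is supported in $B_{\epsilon \log \epsilon^{-1}}(z)$, so $\hat h_\epsilon^*(z)$ is measurable with respect to $h|_{B_{\epsilon \log \epsilon^{-1}}(z)}$; the statement for $\hat D_h^\epsilon(\cdot,\cdot;U)$ then follows because that internal metric is an explicit path-integral functional of $\hat h_\epsilon^*|_U$, which is determined by $h|_{B_{\epsilon \log \epsilon^{-1}}(U)}$. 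For item~\ref{loc:TranslationInvariance}, the substitution $w\mapsto w+b$ in the integral defining $\hat H_\epsilon^*(z)$, using radial symmetry of $\psi_\epsilon$ and $p_{\epsilon^2/2}$, gives $\hat H_\epsilon^*(z)=\hat h_\epsilon^*(z+b)$; composing a path $P\colon z\to w$ with the translation $\cdot+b$ then matches the two path integrals, and dividing by $\fa_\epsilon$ gives the identity.

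Items~\ref{loc:Continuity} and~\ref{loc:UniformComparison} carry the analytic content, and here the plan is to cite \cite[Lemma 2.2]{UpToConstants}; the underlying mechanism is as follows. The map $(z,\epsilon)\mapsto\hat h_\epsilon^*(z)$ is a centered Gaussian field whose covariance is jointly smooth off the diagonal, so one has H\"older moment bounds $\EE[|\hat h_\epsilon^*(z)-\hat h_{\epsilon'}^*(z')|^{2k}]\lesssim(|z-z'|+|\epsilon-\epsilon'|)^{k\alpha}$ on compacta, and the Kolmogorov criterion yields the jointly continuous modification of item~\ref{loc:Continuity}. For the first half of item~\ref{loc:UniformComparison}, the difference $h_\epsilon^*(z)-\hat h_\epsilon^*(z)$ is a centered Gaussian obtained by pairing $h$ against the kernel $p_{\epsilon^2/2}(z-\cdot)\bigl(1-Z_\epsilon^{-1}\psi_\epsilon(z-\cdot)\bigr)$, whose $L^1$-mass---and hence, via the logarithmic Green's function of the GFF, whose variance---is super-polynomially small in $\epsilon$, because $p_{\epsilon^2/2}$ has Gaussian tails at scale $\epsilon$ while the window $B_{\epsilon \log \epsilon^{-1}}(0)$ reaches out $\log\epsilon^{-1}$ standard deviations. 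A union bound over an $\epsilon$-dependent net of $\overline U$, with oscillation between net points controlled by item~\ref{loc:Continuity}, then makes $\PP[\sup_{z\in\overline U}|h_\epsilon^*(z)-\hat h_\epsilon^*(z)|>\delta]$ summable along $\epsilon=2^{-n}$, so Borel--Cantelli gives the a.s.\ limit along the dyadic sequence, and joint continuity in $\epsilon$ extends it to the continuum index. The metric half of item~\ref{loc:UniformComparison} is then soft: on the event $\sup_{z\in\overline U}|h_\epsilon^*(z)-\hat h_\epsilon^*(z)|\le\eta$ one has $e^{-\xi\eta}e^{\xi h_\epsilon^*}\le e^{\xi\hat h_\epsilon^*}\le e^{\xi\eta}e^{\xi h_\epsilon^*}$ pointwise on $\overline U$, so every path integral over a connected $V\subset U$ is squeezed between $e^{\mp\xi\eta}$ times the corresponding integral for $h_\epsilon^*$, yielding $e^{-\xi\eta}\le\hat D_h^\epsilon(z,w;V)/D_h^\epsilon(z,w;V)\le e^{\xi\eta}$ uniformly over $z\ne w$ in $V$ and over all such $V$; sending $\epsilon\to0$ sends $\eta\to0$.

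For item~\ref{loc:ConvergenceInProbability} the plan is to combine item~\ref{loc:UniformComparison} with the convergence $\LFPP\to D_h$ locally uniformly in probability already established through \eqref{eq:LQGDefinition}. Fix a bounded connected open $U$ and a compact $K\subset U$. Since $(\CC,D_h)$ is a length space inducing the Euclidean topology (axiom~\ref{axiom:LengthSpace}) and LFPP is tight, with probability tending to $1$ the $D_h$-geodesics and the LFPP near-geodesics between points of $K$ remain in a fixed bounded open $U'\supset K$, so for $z,w\in K$ the metrics $D_h^\epsilon$ and $\hat D_h^\epsilon$ agree with their internal versions on $U'$; item~\ref{loc:UniformComparison} then transfers to the ambient metrics, and together with $\LFPP\to D_h$ on $K$ it gives $\locLFPP\to D_h$ on $K$ in probability. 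Exhausting $\CC$ by such $U$ yields local uniform convergence in probability.

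The step I expect to be the main obstacle is item~\ref{loc:UniformComparison}: establishing the supremal bound on $|h_\epsilon^*-\hat h_\epsilon^*|$ with a rate fast enough for Borel--Cantelli requires the Kolmogorov/chaining estimates above with the $\epsilon$-dependence tracked explicitly, and the passage from uniform closeness of the fields to a metric comparison that is uniform over all subdomains $V$ simultaneously leans on the length-space structure; the remaining items are routine once this is in hand.
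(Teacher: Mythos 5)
Your proposal is correct and follows essentially the same route as the paper: items \ref{loc:AddRandomVariableToField}, \ref{loc:Locality}, and \ref{loc:TranslationInvariance} by direct manipulation of the definition, items \ref{loc:Continuity} and \ref{loc:UniformComparison} by citing Lemma 2.2 of the cited reference, and item \ref{loc:ConvergenceInProbability} by exhausting $\CC$ with an increasing sequence of bounded connected open sets. One small remark: the paper's printed proof of item \ref{loc:ConvergenceInProbability} says to ``apply \ref{loc:TranslationInvariance}'' with an exhaustion of $\CC$, but the content of that reduction is really item \ref{loc:UniformComparison}, which is what you invoke; you also supply the step (tightness plus the length-space property) needed to pass from the internal-metric comparison of item \ref{loc:UniformComparison} to the ambient point-to-point metrics, a step the paper leaves implicit. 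So you are slightly more detailed, but the mechanism is the same.
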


      As mentioned earlier, the starting point for the present work is the approximate Lipschitz equivalence between $\LFPP$ and $D_h$ proven in \cite{UpToConstants}.
      This result holds with $\locLFPP$ in place of $\LFPP$ as well (in fact, the proof in \cite{UpToConstants} deduces the result for $\LFPP$ from the result for $\locLFPP$), so since we will be working with $\locLFPP$ for many of our proofs, it is worth stating the Lipschitz equivalence for both $\LFPP$ and $\locLFPP$.
      For details, see Theorem 1.8 and Proposition 3.6 in \cite{UpToConstants}.

      \begin{prop}
        \label{prop:UpToConstants}
        For each $\zeta \in (0, 1)$ there exists $\beta > 0$ and $C_0 > 0$ depending only on $\zeta$ and the law of $D_h$ such that the following is true.
        Let $U \subset \CC$ be a deterministic, connected, bounded open set.
        With probability at least $1 - O_{\epsilon}(\epsilon^{\beta})$ as $\epsilon \to 0$, 
        \begin{align}
          \fa_{\epsilon}^{-1} \hat{D}_h^{\epsilon}\left(B_{\epsilon^{1 - \zeta}}(z), B_{\epsilon^{1 - \zeta}}(w); B_{\epsilon^{1 - \zeta}}(U) \right) \
          &\leq \ C_0 D_h\left(z,w; U \right) \quad \forall z,w \in U, \label{eq:UpToConstantsLocDh} \\
          D_h\left(B_{\epsilon^{1 - \zeta}}(z), B_{\epsilon^{1 - \zeta}}(w); B_{\epsilon^{1 - \zeta}}(U) \right) \
          &\leq \ C_0 \fa_{\epsilon}^{-1} \hat{D}_h^{\epsilon}\left(z,w; U \right) \quad \forall z,w \in U, \label{eq:UpToConstantsDhLoc} \\
          \LFPP\left(B_{\epsilon^{1 - \zeta}}(z), B_{\epsilon^{1 - \zeta}}(w); B_{\epsilon^{1 - \zeta}}(U) \right) \
          &\leq \ C_0 D_h\left(u,v; U \right) \quad \forall z,w \in U, \label{eq:UpToConstantsLFPPDh} \\
          D_h\left(B_{\epsilon^{1 - \zeta}}(z), B_{\epsilon^{1 - \zeta}}(w); B_{\epsilon^{1 - \zeta}}(U) \right) \
          &\leq \ C_0 \LFPP\left(z,w ; U \right) \ \forall z,w \in U, \label{eq:UpToConstantsDhLFPP}
        \end{align}
        where the big-$O$ constant depends on $U$.
      \end{prop}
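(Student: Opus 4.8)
The plan is to \emph{not} reprove anything from scratch, but rather to cite the corresponding statements from \cite{UpToConstants} and then patch in the two extra inequalities \eqref{eq:UpToConstantsLFPPDh} and \eqref{eq:UpToConstantsDhLFPP} for the non-localized metric $\LFPP$. First, I would note that \eqref{eq:UpToConstantsLocDh} and \eqref{eq:UpToConstantsDhLoc} are precisely the content of \cite[Proposition 3.6]{UpToConstants} (the localized version of the ``up to constants'' comparison), so there is nothing to do there except transcribe the constants $\beta$ and $C_0$ and note that they depend only on $\zeta$ and the law of $D_h$. The probability lower bound $1 - O_\epsilon(\epsilon^\beta)$ with $U$-dependent implicit constant is also stated there.

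Second, for the two inequalities involving $\LFPP$, I would reduce to the localized case using part \ref{loc:UniformComparison} of Lemma \ref{lemma:PropertiesOfLocalizedFieldAndLFPP}. The key point is that $\hat D_h^\epsilon(z,w;V)/D_h^\epsilon(z,w;V) \to 1$ uniformly over all connected $V \subset U$ and all distinct $z,w \in V$, almost surely. Combined with a modulus-of-continuity bound for $h_\epsilon^* - \hat h_\epsilon^*$ (also in that same part of the lemma), this shows that on an event of probability $1 - o_\epsilon(1)$ — which we can absorb into the $1 - O_\epsilon(\epsilon^\beta)$ bound after possibly shrinking $\beta$ using quantitative versions of these comparisons, or more simply by taking the intersection with the event from \cite[Proposition 3.6]{UpToConstants} — we have, say, $\frac12 \hat D_h^\epsilon \le D_h^\epsilon \le 2 \hat D_h^\epsilon$ as internal metrics on subdomains of $B_{\epsilon^{1-\zeta}}(U)$. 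Plugging this two-sided bound into \eqref{eq:UpToConstantsLocDh} and \eqref{eq:UpToConstantsDhLoc} and replacing $C_0$ by $2 C_0$ yields \eqref{eq:UpToConstantsLFPPDh} and \eqref{eq:UpToConstantsDhLFPP}. (Alternatively, and more in the spirit of \cite{UpToConstants}, one cites \cite[Theorem 1.8]{UpToConstants} directly for the non-localized metric and only needs the localized parts from Proposition 3.6 of that paper; I would do whichever is cleanest given the exact form of the cited statements.)

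The main subtlety — and the step I would be most careful about — is the bookkeeping of the probability bound. The almost-sure statement in Lemma \ref{lemma:PropertiesOfLocalizedFieldAndLFPP}\ref{loc:UniformComparison} gives convergence with probability $1$, but without a rate; to keep the clean $1 - O_\epsilon(\epsilon^\beta)$ form one should instead invoke the quantitative comparison between $h_\epsilon^*$ and $\hat h_\epsilon^*$ that underlies \cite[Lemma 2.2]{UpToConstants}, namely that $\sup_{z \in \overline U}|h_\epsilon^*(z) - \hat h_\epsilon^*(z)|$ is bounded by a polynomial-in-$\epsilon$ power with probability $1 - O_\epsilon(\epsilon^\beta)$ (after adjusting $\beta$). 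Since $e^{\xi \cdot}$ of such a term is $1 + o_\epsilon(1)$ on that event, the constant $C_0$ is affected by at most a factor close to $1$, so it suffices to replace $C_0$ by $(1+o_\epsilon(1))C_0 \le 2C_0$ for $\epsilon$ small. Everything else — the passage between $z,w$ and the Euclidean balls $B_{\epsilon^{1-\zeta}}(z)$, the internal-metric domains, the $U$-dependence of the implicit constant — is inherited verbatim from \cite{UpToConstants}, so I would simply point to the relevant statements there rather than reproduce the annulus-iteration argument.
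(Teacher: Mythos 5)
Your proposal matches the paper's treatment almost exactly: the paper itself offers no proof beyond citing \cite[Theorem 1.8 and Proposition 3.6]{UpToConstants} and remarking that Proposition 3.6 (the $\locLFPP$ version) holds with probability $1 - O_\epsilon(\epsilon^\beta)$ while Theorem 1.8 (the $\LFPP$ version) is deduced from it via the almost sure uniform comparison of $\LFPP$ and $\locLFPP$. You do the same, and you are in fact slightly more careful than the paper's remark on the one genuine subtlety: the a.s.\ comparison in Lemma \ref{lemma:PropertiesOfLocalizedFieldAndLFPP}\ref{loc:UniformComparison} has no rate, so to retain the $1 - O_\epsilon(\epsilon^\beta)$ form for \eqref{eq:UpToConstantsLFPPDh}--\eqref{eq:UpToConstantsDhLFPP} one must (as you say) invoke the quantitative bound on $\sup_{z}|h_\epsilon^*(z)-\hat h_\epsilon^*(z)|$ that underlies \cite[Lemma 2.2]{UpToConstants} rather than the qualitative a.s.\ statement — the paper asserts this transfer works but leaves it implicit in the parenthetical.
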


      We remark that although the statement of Theorem 1.8 in \cite{UpToConstants} only says ``with probability tending to $1$ as $\epsilon \to 0$'', the proof shows that the assertion holds with probability $1 - O_{\epsilon}(\epsilon^{\beta})$ for some $\beta > 0$ (this is the case for Proposition 3.6, and Theorem 1.8 is deduced from the former by means of the almost sure uniform comparison of $\LFPP$ and $\locLFPP$).

  \section{Setup for Theorem \ref{thm:AlmostSureConvergence}}
    \label{section:SetupForMainTheorem}
    For each $R, \epsilon > 0$, define
    \begin{align*}
      \hypertarget{V}{V(R)} \
      &\coloneqq \ \left\{(z,w) \in \CC^2 : \exists D_h\text{-geodesic from } z \text{ to } w \text{ contained in } B_R(0) \right\}, \\
      \hypertarget{Vhat}{\hat{V}_{\epsilon}(R)} \
      &\coloneqq \ \left\{(z,w) \in \CC^2 : \exists \hat{D}_h^{\epsilon}\text{-geodesic from } z \text{ to } w \text{ contained in } B_R(0) \right\},
    \end{align*}
    The goal of this section is to prove the following.

    \newcommand{\g}{\frac{1}{2}} 

    \begin{prop}
      \label{prop:MainIterationArgument}
      There is a constant $A > 0$ such that the following is true.
      Assume there exist $0 < R < \infty$, $\zeta_{-} \in (0, 1)$, $\beta > 0$, and $C_0(\epsilon), C_0'(\epsilon) > 0$ such that with probability $1 - O_{\epsilon}(\epsilon^{\beta})$ as $\epsilon \to 0$,
      \begin{align}
        \locLFPP\left(B_{4 \epsilon^{1 - \zeta_{-}}}(u), B_{4 \epsilon^{1 - \zeta_{-}}}(v) \right) \
        &\leq \ C_0(\epsilon) D_h\left(u,v \right) \ \forall (u,v) \in \hyperlink{V}{V(R)}, \label{eq:InitialLipschitzConstant} \\
        D_h\left(B_{4 \epsilon^{1 - \zeta_{-}}}(u), B_{4 \epsilon^{1 - \zeta_{-}}}(v) \right) \
        &\leq \ C_0'(\epsilon) \locLFPP(u,v) \ \forall (u,v) \in \hyperlink{Vhat}{\hat{V}_{\epsilon}(R)}. \label{eq:InitialLipschitzConstantReversed}
      \end{align}
      Choose $\delta, \zeta, \zeta_{+} \in (0,1)$ such that $\zeta_{-} < \zeta < \zeta_{+}$ and $1 - \zeta_{+} < \g (1 - \zeta)$. 
      For each $\epsilon \in (0,1)$, choose $\cR_{\epsilon} \subset (\epsilon^{1 - \zeta}, \epsilon^{1 - \zeta_{+}}) \cap \{8^j\}_{j \in \ZZ}$ with $\# \cR_{\epsilon} > \frac{1}{3} \# ((\epsilon^{1 - \zeta}, \epsilon^{1 - \zeta_{+}}) \cap \{8^j\}_{j \in \ZZ})$.
      Then with probability $1 - O_{\epsilon}(\epsilon^{\beta \w 2(1 - \zeta)})$ as $\epsilon \to 0$,
      \begin{align}
        \locLFPP\left(B_{4 \epsilon^{1 - \zeta_{+}}}(u), B_{4 \epsilon^{1 - \zeta_{+}}}(v) \right) \
        &\leq \ C_1(\epsilon) D_h\left(u,v\right) \ \forall (u,v) \in \hyperlink{V}{V(R)}, \label{eq:NewLipschitzConstant} \\
        D_h\left(B_{4 \epsilon^{1 - \zeta_{+}}}(u), B_{4 \epsilon^{1 - \zeta_{+}}}(v) \right) \
        &\leq \ C_1'(\epsilon) \locLFPP(u,v) \ \forall (u,v) \in \hyperlink{Vhat}{\hat{V}_{\epsilon}(R)}. \label{eq:NewLipschitzConstantReversed}
      \end{align}
      where 
      \begin{align}
        C_1(\epsilon) \
        &\coloneqq \ \frac{A}{A+1} \left[\sup_{r \in \cR_{\epsilon}} \ratios \vee C_0(\epsilon) \right] + \left[\frac{1 + \delta}{A+1} + 2 \delta \right] \sup_{r \in \cR_{\epsilon}} \ratios, \label{eq:UpdatedLipschitzConstant} \\
        C_1'(\epsilon) \
        &\coloneqq \ \frac{A}{A+1} \left[\sup_{r \in \cR_{\epsilon}} \invratios \vee C_0'(\epsilon) \right] + \left[\frac{1 + \delta}{A+1} + 2 \delta \right] \sup_{r \in \cR_{\epsilon}} \invratios. \label{eq:UpdatedLipschitzConstantReversed}
      \end{align}
    \end{prop}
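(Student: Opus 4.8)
The plan is to run the iteration sketched in the introduction. I describe the argument for \eqref{eq:NewLipschitzConstant}; \eqref{eq:NewLipschitzConstantReversed} will follow by the symmetric argument with $D_h$-geodesics replaced by $\hat D_h^\epsilon$-geodesics, $V(R)$ by $\hat{V}_\epsilon(R)$, and \ratios by \invratios. First I would set up \emph{good events on annuli}: fix radii $0<s_1<s_2<1$ (a ``fat'' annulus, chosen depending only on $A$), and for $x\in\CC$, $r\in\cR_\epsilon$ let $G_r(x)$ be the event that $\locLFPP(y,z;\AA_{s_1r,s_2r}(x))\le(1+\delta)\ratios\, D_h(y,z;\AA_{s_1r,s_2r}(x))$ for all $y,z\in\AA_{s_1r,s_2r}(x)$, where $r$ is also the subscript inside \ratios and $\locLFPP$ denotes $\fa_\epsilon^{-1}\hat D_h^\epsilon$. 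Both sides are unchanged under $h\mapsto h+c$ by axiom \ref{axiom:WeylScaling} and Lemma \ref{lemma:PropertiesOfLocalizedFieldAndLFPP}\ref{loc:AddRandomVariableToField}, and — after shrinking $s_1,s_2$ slightly in the definition, using $\epsilon\log\epsilon^{-1}\ll r$ — the internal metric $\locLFPP(\cdot,\cdot;\AA_{s_1r,s_2r}(x))$ is measurable with respect to $h|_{\AA_{s_1r,s_2r}(x)}$, so $G_r(x)\in\sigma\{(h-h_r(x))|_{\AA_{s_1r,s_2r}(x)}\}$. Pushing forward by $w\mapsto rw+x$ and combining the exact spatial scaling of the (non-localized) LFPP metric, Lemma \ref{lemma:PropertiesOfLocalizedFieldAndLFPP}\ref{loc:UniformComparison}, and axiom \ref{axiom:CoordinateChange}, $G_r(x)$ becomes a comparison of $\fa_{\epsilon/r}^{-1}\hat D_{\tilde h}^{\epsilon/r}$ with $D_{\tilde h}$ on $\AA_{s_1,s_2}(0)$, for $\tilde h$ a whole-plane GFF plus a continuous function whose law does not depend on $x$ or $r$; since $\epsilon/r\le\epsilon^{\zeta}\to0$ uniformly over $r\in\cR_\epsilon$, Lemma \ref{lemma:PropertiesOfLocalizedFieldAndLFPP}\ref{loc:ConvergenceInProbability} (upgraded to internal metrics on the annulus) makes $\PP[G_r(x)]\to1$ uniformly over $x$ and $r\in\cR_\epsilon$. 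Because $\#\cR_\epsilon\asymp\log\epsilon^{-1}$, I can choose a geometric subsequence $r_1>\dots>r_K$ of $\cR_\epsilon$ with $r_{k+1}/r_k\le s_1$ and $K\asymp\log\epsilon^{-1}$; then for each point $x$ of an $O(\epsilon^{-2(1-\zeta)})$-size net of $B_R(0)$ at scale $\asymp\epsilon^{1-\zeta}$, Lemma \ref{lemma:IndependenceAcrossConcentricAnnuli} centered at $x$ (via Lemma \ref{lemma:PropertiesOfLocalizedFieldAndLFPP}\ref{loc:TranslationInvariance} and translation invariance) with $a$ large depending on $\zeta$ gives $\PP[\bigcup_{k\le K}G_{r_k}(x)]\ge1-O(\epsilon^{4(1-\zeta)})$, and a union bound yields: with probability $1-O(\epsilon^{2(1-\zeta)})$, \emph{every $x\in B_R(0)$ is surrounded by a good annulus} $\AA_{s_1r(x),s_2r(x)}(x)$ for some $r(x)\in\cR_\epsilon$.

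Next I would produce the proportion $\tfrac{1}{A+1}$. Fix $(u,v)\in V(R)$ and a $D_h$-geodesic $P\colon[0,T]\to B_R(0)$, $T=D_h(u,v)$, parameterized by $D_h$-length, and assume $T$ exceeds a threshold $T_0(\epsilon)\to0$ (the short-geodesic case is dealt with below). Since $P$ passes through the center $P(t)$ of each of its good annuli, the excursion $I(t)$ of $P$ through $B_{s_2r(t)}(P(t))$ around time $t$ contains a crossing of $\AA_{s_1r(t),s_2r(t)}(P(t))$, and because the annulus is fat this crossing takes up a fixed positive fraction of the $D_h$-length of $I(t)$. The excursions $\{I(t)\}$ cover $[0,T]$, so a Besicovitch-type covering argument in the spirit of Section 3.2 of \cite{ConformalCovariance} extracts a bounded-multiplicity subfamily and then disjoint sub-intervals $[s_j,t_j]\subset[0,T]$ on which $P$ crosses a good annulus $\AA_j:=\AA_{s_1r_j,s_2r_j}(P(t_j))$, with $G:=\sum_j(t_j-s_j)\ge\tfrac{1}{A+1}T'$, where $T'\le T$ is the $D_h$-length of $P$ outside small neighborhoods of $u$ and $v$, and $A>0$ is a universal constant governed by the Besicovitch multiplicity and by $(s_1,s_2)$. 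Getting this step right — a genuinely universal $A$ (independent of $\zeta,\delta,R$), while keeping the probabilities of all the good events above the threshold that Lemma \ref{lemma:IndependenceAcrossConcentricAnnuli} requires — is the point I expect to be the main obstacle.

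To assemble the bound, on a crossing interval $[s_j,t_j]$ the curve $P|_{[s_j,t_j]}$ lies in $\overline{\AA_j}$, so for any partition of $[s_j,t_j]$ one has $\sum_i\locLFPP(P(\tau_{i-1}),P(\tau_i))\le\sum_i\locLFPP(P(\tau_{i-1}),P(\tau_i);\AA_j)\le(1+\delta)\ratios\sum_iD_h(P(\tau_{i-1}),P(\tau_i);\AA_j)$; since $D_h$-length along a curve in $\overline{\AA_j}$ coincides with its length for the internal metric on $\AA_j$, the last sum is $\le\Len(P|_{[s_j,t_j]};D_h)=t_j-s_j$, giving $\locLFPP(P(s_j),P(t_j))\le\Len(P|_{[s_j,t_j]};\locLFPP)\le(1+\delta)S(t_j-s_j)$ with $S:=\sup_{r\in\cR_\epsilon}\ratios$. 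On each complementary ``bad'' interval $[a,b]$, $P|_{[a,b]}$ is a $D_h$-geodesic in $B_R(0)$, so $(P(a),P(b))\in V(R)$ and \eqref{eq:InitialLipschitzConstant} gives $\locLFPP(B_{4\epsilon^{1-\zeta_-}}(P(a)),B_{4\epsilon^{1-\zeta_-}}(P(b)))\le C_0(\epsilon)(b-a)$; the two end segments of $P$ (from $u$, resp.\ $v$, into its own good ball, whose radius is $<\epsilon^{1-\zeta_+}$) are absorbed into $B_{4\epsilon^{1-\zeta_+}}(u)$, resp.\ $B_{4\epsilon^{1-\zeta_+}}(v)$. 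Chaining these estimates along $[0,T]$ — consecutive intervals share an endpoint, and the only loss at each transition between a point-valued (good) and a ball-valued (bad) bound is one term bounding the $\locLFPP$-diameter of a ball $B_{4\epsilon^{1-\zeta_-}}(\cdot)$ — and using that each crossing, being a crossing of a round annulus of scale $\ge\epsilon^{1-\zeta}$, has $D_h$-length $\ge(\epsilon^{1-\zeta})^{\xi Q+o(1)}$ (so there are $O(T(\epsilon^{1-\zeta})^{-\xi Q-o(1)})$ intervals in total) together with the standard bound $(\epsilon^{1-\zeta_-})^{\xi Q-o(1)}$ for the $\locLFPP$-diameter of $B_{4\epsilon^{1-\zeta_-}}(\cdot)$, the total chaining loss is at most a constant times $\epsilon^{\xi Q(\zeta-\zeta_-)-o(1)}\,T$, which is $\le2\delta S\,T$ for $\epsilon$ small because $\zeta_-<\zeta$. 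Since $B_{4\epsilon^{1-\zeta_-}}\subset B_{4\epsilon^{1-\zeta_+}}$ this gives
\[
  \locLFPP\!\left(B_{4\epsilon^{1-\zeta_+}}(u),B_{4\epsilon^{1-\zeta_+}}(v)\right)\ \le\ (1+\delta)S\,G+C_0(\epsilon)(T'-G)+2\delta S\,T,
\]
and since $G\ge\tfrac{1}{A+1}T'$ the elementary inequality $(1+\delta)S\,G+C_0(T'-G)\le T'\big[\tfrac{A}{A+1}(S\vee C_0)+\tfrac{1+\delta}{A+1}S\big]$ (checking the cases $C_0\ge S$ and $C_0<S$) together with $T'\le D_h(u,v)$ yields \eqref{eq:NewLipschitzConstant} with $C_1(\epsilon)$ exactly as in \eqref{eq:UpdatedLipschitzConstant}.

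Finally, if $T=D_h(u,v)<T_0(\epsilon)$, I would invoke the almost sure modulus-of-continuity estimates comparing $D_h$ to the Euclidean metric — this is where the hypothesis $1-\zeta_+<\tfrac12(1-\zeta)$ enters, allowing $T_0\asymp(\epsilon^{1-\zeta})^{\xi Q}$ to be chosen small enough that $D_h(u,v)<T_0$ forces $|u-v|<8\epsilon^{1-\zeta_+}$, off an event of probability $O(\epsilon^{2(1-\zeta)})$ — so that $B_{4\epsilon^{1-\zeta_+}}(u)\cap B_{4\epsilon^{1-\zeta_+}}(v)\ne\emptyset$ and \eqref{eq:NewLipschitzConstant} is trivial. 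Intersecting the event from the first paragraph, the probability-$(1-O(\epsilon^\beta))$ event on which \eqref{eq:InitialLipschitzConstant} holds, and the continuity and diameter estimates above gives \eqref{eq:NewLipschitzConstant} off an event of probability $O(\epsilon^{\beta\wedge2(1-\zeta)})$. Repeating everything with $\hat D_h^\epsilon$-geodesics in place of $D_h$-geodesics and \eqref{eq:InitialLipschitzConstantReversed} in place of \eqref{eq:InitialLipschitzConstant} — so the good events now compare $D_h$-internal metrics with $\invratios$ times $\locLFPP$-internal metrics and $S$ is replaced by $\sup_{r\in\cR_\epsilon}\invratios$ — proves \eqref{eq:NewLipschitzConstantReversed} with $C_1'(\epsilon)$, again off an event of probability $O(\epsilon^{\beta\wedge2(1-\zeta)})$.
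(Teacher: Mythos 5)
Your overall strategy matches the paper's: set up ``good annulus'' events via Lemma \ref{lemma:IndependenceAcrossConcentricAnnuli} and translation/scale invariance, decompose a geodesic into good crossings (where the annulus comparison applies) and bad segments (where \eqref{eq:InitialLipschitzConstant} applies), and argue the good crossings account for a $\frac{1}{A+1}$ fraction. Your good event $G_r(x)$ (comparison of internal metrics on the whole annulus) is a variant of the paper's $E_{r,\epsilon}(z)$ (comparison only for geodesics contained in a thinner annulus, paired with an auxiliary measurability event), and the localization/independence part of your plan is sound.

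The genuine gap is exactly the step you yourself flag as ``the point I expect to be the main obstacle'': producing the $\frac{1}{A+1}$ proportion with a \emph{universal} $A$. You argue that ``because the annulus is fat this crossing takes up a fixed positive fraction of the $D_h$-length of $I(t)$,'' and that a Besicovitch-type covering then gives $G \geq \frac{1}{A+1}T'$. This does not hold. A Besicovitch covering controls the \emph{Euclidean} multiplicity of the balls $B_{s_2 r(t)}(P(t))$, but for a random length metric such as $D_h$, the $D_h$-length of the excursion $I(t)$ through $B_{s_2 r(t)}(P(t))$ can be arbitrarily larger than the $D_h$-length of the annulus crossing it contains — the geodesic can spend an arbitrary proportion of its $D_h$-time wandering inside $B_{s_1 r(t)}(P(t))$, where the Euclidean geometry of the fat annulus gives you nothing. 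To get a universal $A$, the paper adds a further probabilistic input, condition \ref{conditions:Proportions} of Definition \ref{defn:AnnulusEvents}: $D_h(\text{around } \AA_{\alpha r, r}(z)) < A\, D_h(\partial B_{\alpha r}(z), \partial B_r(z))$, and similarly for $\locLFPP$. Since a geodesic that enters $\AA_{\alpha r, r}(z)$ from outside and later exits again must hit any around-annulus loop at least twice, this event bounds the ``waiting time'' $s_j - t_{j-1}$ between annulus crossings by $A$ times the crossing time $t_j - s_j$, which is precisely what yields $s_j - t_{j-1} \leq \frac{A}{A+1}(t_j - t_{j-1})$. The universality of $A$ comes from the fact that the around-versus-across comparison can be made to hold at scale $r=1$ with probability $\geq p$ for $A = A(p,\alpha)$ depending only on $p$ and $\alpha$ (Lemma \ref{lemma:Condition3}), and then transferred to all scales by exact scaling; $p$ and $\alpha$ are in turn fixed by the universal threshold in Lemma \ref{lemma:IndependenceAcrossConcentricAnnuli}. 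Without an around-annulus event of this type your $A$ cannot be universal, and the iteration in Section 3 does not close.

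A secondary, more repairable concern: your chaining loss estimate uses bounds with $o(1)$ in the exponent for the $D_h$-length of an annulus crossing and the $\locLFPP$-diameter of an $\epsilon^{1-\zeta_-}$-ball, summed over the many annuli encountered by the geodesic, to conclude a total loss $\leq 2\delta S T$. These bounds are random (the $o(1)$ depends on $h$), and you would need a uniform-over-lattice version with controlled probability of failure; the paper avoids this by encoding a per-annulus error bound, condition \ref{conditions:Error}, directly into the local event $E_{r,\epsilon}(z)$, so the error is automatically subsumed in the same union bound as the rest.
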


    Before proceeding, let us comment on how we will use Proposition \ref{prop:MainIterationArgument}.
    The key idea is that the constant $A$ in Proposition \ref{prop:MainIterationArgument} does not depend on any of the other parameters, and in particular doesn't depend on $\zeta_{-}$, $\zeta$, and $\zeta_{+}$.
    So after applying the proposition with some choice of $\zeta_{-}, \zeta, \zeta_{+}$, equations \eqref{eq:NewLipschitzConstant} and \eqref{eq:NewLipschitzConstantReversed} imply we can apply the proposition again, this time with $\zeta_{+}$ in place of $\zeta_{-}$ and $(C_1(\epsilon), C_1'(\epsilon))$ in place of $(C_0(\epsilon), C_0'(\epsilon))$.
    As long as the $\sup_{r \in \cR_{\epsilon}} \ratios$ and $\sup_{r \in \cR_{\epsilon}} \invratios$ terms can be made close to $1$, then the Lipschitz constants from iteratively applying Proposition \ref{prop:MainIterationArgument} in this manner will eventually become close to $1$.

    The proof of Proposition \ref{prop:MainIterationArgument} is similar to the argument given in Section 3.2 of \cite{ConformalCovariance}.
    The idea is to argue that for a given $D_h$-geodesic or $\locLFPP$-geodesic from $u$ to $v$, a positive proportion of it lies in annuli on which $\locLFPP$ and $D_h$ are Lipschitz equivalent with Lipschitz constant $(1 + \delta) \ratios$ or $(1 + \delta) \invratios$.
    To find such annuli, we will apply Lemma \ref{lemma:IndependenceAcrossConcentricAnnuli} to the following events.

    \begin{defn}
      \label{defn:AnnulusEvents}
      For each $z \in \CC$, $\epsilon, \zeta_{-}, \delta \in (0,1)$, $\alpha \in (7/8, 1)$, $A > 1$, and $r \in (\epsilon, 1)$, let $E_{r,\epsilon}(z) = E_{r,\epsilon}(z; \alpha, \delta, \zeta_{-}, A)$ be the interesection of the events $E_{r,\epsilon}^{(j)}(z)$ with $j \in \{1,2,3,4\}$ defined by
      \begin{enumerate}
        \item \label{conditions:GoodLipschitzConstant} $E_{r,\epsilon}^{(1)}(z) = E_{r,\epsilon}^{(1)}(z; \alpha, \delta)$: For each $u \in \partial B_{\alpha r}(z)$ and each $v \in \partial B_r(z)$ such that there is a $D_h$-geodesic (resp. $\hat{D}_h^{\epsilon}$-geodesic) from $u$ to $v$ contained in $\overline{\AA_{\alpha r, r}(z)}$, we have respectively 
          \begin{align*}
            \locLFPP(u, v) \
            &\leq \ \ratios (1 + \delta) D_h(u,v; \AA_{r/2, 2r}(z)), \\
            D_h(u,v) \
            &\leq \ \invratios (1 + \delta) \locLFPP(u,v; \AA_{r/2 + \frepsilon, 2r - \frepsilon}(z)).
          \end{align*}
        \item \label{conditions:Measurability} $E_{r,\epsilon}^{(2)}(z) = E_{r, \epsilon}^{(2)}(z; \alpha)$: If $u \in \partial B_{\alpha r}(z)$ and $v \in \partial B_r(z)$, then
          \begin{itemize}
            \item $D_h(u,v; \overline{\AA_{\alpha r, r}(z)}) = D_h(u,v; \AA_{r/2, 2r}(z))$ implies $v$ is contained in the open $\locLFPP$-geodesic ball of radius $\locLFPP(u; \partial \AA_{r/2 + \frepsilon, 2r - \frepsilon}(z))$ centered at $u$;
            \item $\locLFPP(u,v; \overline{\AA_{\alpha r, r}(z)}) = \locLFPP(u,v; \AA_{r/2 + \frepsilon, 2r - \frepsilon}(z))$ implies that $v$ is contained in the open $D_h$-geodesic ball with radius $D_h(u, \partial \AA_{r/2, 2r}(z))$ centered at $u$.
          \end{itemize}
        \item \label{conditions:Proportions} $E_{r,\epsilon}^{(3)}(z) = E_{r,\epsilon}^{(3)}(z; \alpha, A)$: 
          \begin{align*}
            D_h(\text{around } \AA_{\alpha r, r}(z)) \
            &< \ A D_h(\partial B_{\alpha r}(z), \partial B_r(z)), \\
            \locLFPP(\text{around } \AA_{\alpha r, r}(z)) \
            &< \ A \locLFPP(\partial B_{\alpha r}(z), \partial B_r(z)).
          \end{align*}
        \item \label{conditions:Error} $E_{r,\epsilon}^{(4)}(z) = E_{r,\epsilon}^{(4)}(z; \alpha, \delta, \zeta_{-})$: For all $u \in \overline{\AA_{\alpha r, r}(z)}$ and $v \in \overline{B_{4 \epsilon^{1 - \zeta_{-}}}(u)}$, we have
          \begin{align*}
            \locLFPP\left(u,v; \AA_{r/2 + \frepsilon, 2r - \frepsilon}(z) \right) \
            &\leq \ \delta \ratios D_h\left(\partial B_{\alpha r}(z), \partial B_r(z) \right), \\
            D_h\left(u,v; \AA_{r/2, 2r}(z) \right) \
            &\leq \ \delta \invratios \locLFPP\left(\partial B_{\alpha r}(z), \partial B_r(z) \right).
          \end{align*}
      \end{enumerate}
    \end{defn}

    \begin{figure}[h]
      \centering
      \includegraphics[scale=0.6]{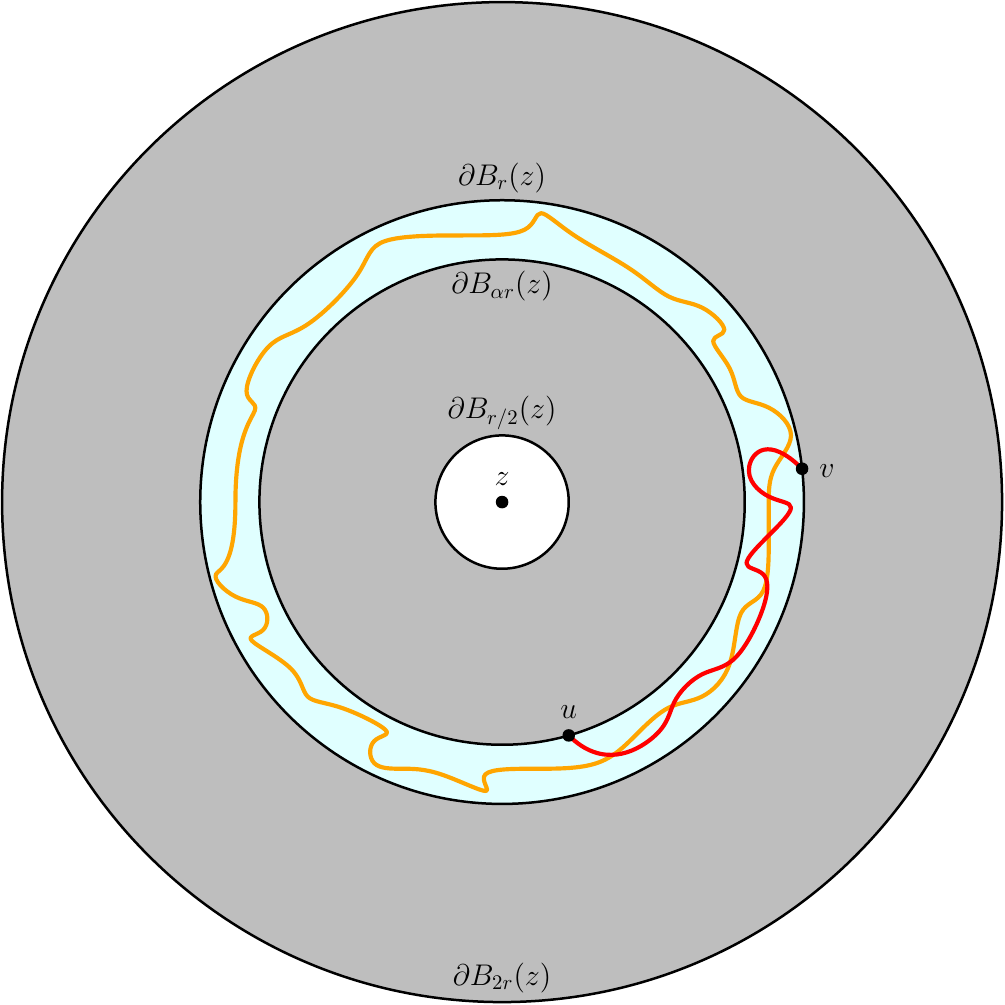}
      \caption{Illustration of Definition \ref{defn:AnnulusEvents}.
        Condition 1 says that if the red path is a $D_h$-geodesic, then $\locLFPP(u,v) \leq \ratios (1 + \delta) D_h(u,v; \AA_{r/2, 2r}(z))$, and analogously with $D_h$ and $\locLFPP$ swapped.
        Condition 2 says that if the $D_h$-distance from $u$ to $v$ is larger than the $D_h$-distance from $u$ to the boundary of the gray annulus, then the red path cannot be a $\locLFPP$-geodesic, and analogously with $D_h$ and $\locLFPP$ swapped.
        Condition 3 says there is a path (shown in orange) around the cyan annulus with $D_h$-length at most $A$ times the $D_h$-distance across said annulus, and analogously with $D_h$ and $\locLFPP$ swapped.
      }
    \end{figure}

    The strategy to prove Proposition \ref{prop:MainIterationArgument} will be to use Lemma \ref{lemma:IndependenceAcrossConcentricAnnuli} to prove that with high probability, for every point $z$ in a fine lattice, there is some radius $r$ such that $E_{r,\epsilon}(z)$ occurs.
    We will then break up a ($D_h$- or $\hat{D}_h^{\epsilon}$-) geodesic into segments which cross between the inner and outer boundaries of one of the annuli $\AA_{\alpha r, r}(z)$, and the segments between consecutive crossings.
    The former are ``good segments'' since condition \ref{conditions:GoodLipschitzConstant} in Definition \ref{defn:AnnulusEvents} says that the $D_h$ and $\locLFPP$ distances between the endpoints of these segments are comparable with Lipschitz constant $(1 + \delta) \ratios$ or $(1 + \delta) \invratios$.
    The segments between consecutive annulus crossings are ``bad segments'', and to bound these, we will use the Lipschitz constant $C_0(\epsilon)$ from \eqref{eq:InitialLipschitzConstant} or $C_0'(\epsilon)$ from \eqref{eq:InitialLipschitzConstantReversed}.
    Since \eqref{eq:InitialLipschitzConstant} and \eqref{eq:InitialLipschitzConstantReversed} only hold when the left-hand side is distances between balls rather than distances between points, condition \ref{conditions:Error} is needed to say that the error terms from estimating point-to-point distances using ball-to-ball distances is small.

    The point of condition \ref{conditions:Proportions} is to ensure that a positive proportion (roughly a $\frac{1}{A+1}$-proportion) of a given geodesic is comprised of the ``good segments''. 
    This is where the $\frac{1 + \delta}{A+1} \sup_{r \in \cR_{\epsilon}} \ratios$ and $\frac{1 + \delta}{A+1} \sup_{r \in \cR_{\epsilon}} \invratios$ come from in \eqref{eq:UpdatedLipschitzConstant} and \eqref{eq:UpdatedLipschitzConstantReversed}.
    The purpose of condition \ref{conditions:Measurability} is to ensure that $E_{r,\epsilon}(z)$ is determined by the field on $\AA_{r/2, 2r}(z)$, which is needed to apply Lemma \ref{lemma:IndependenceAcrossConcentricAnnuli}.

    Once we have proven Proposition \ref{prop:MainIterationArgument}, the idea will be to apply it repeatedly to an increasing sequence of $\zeta_{-}$'s, $\zeta$'s, and $\zeta_{+}$'s to gradually improve the Lipschitz constant.
    For this, it is \textit{crucial} that $A$ is a universal constant independent of all other parameters in the proposition; if $A$ were to depend on $\zeta_{-}$, $\zeta$, and/or $\zeta_{+}$, then the proportion of ``good segments'' could approach $0$ rapidly enough that the Lipschitz constants don't approach $1$ in the limit, and the remaining arguments of this paper completely fall apart.

    Finally, let us comment on why the ratios $\ratios$ and $\invratios$ appear throughout Proposition \ref{prop:MainIterationArgument} and Definition \ref{defn:AnnulusEvents}.
    The idea is that we need to choose the parameters in Definition \ref{defn:AnnulusEvents} to make $E_{r,\epsilon}(z)$ occur with high probability for each $\epsilon$ sufficiently small and for a large enough collection of $r$'s to apply Lemma \ref{lemma:IndependenceAcrossConcentricAnnuli}.
    We would like to choose said parameters for $r = 1$, then rescale space so the same parameters work for other values of $r$.
    Since $\locLFPP$ isn't scale invariant, we instead need to use \ref{loc:UniformComparison} from Lemma \ref{lemma:PropertiesOfLocalizedFieldAndLFPP} and exact scale invariance of $\LFPP$.
    Since this exact scale invariance is used repeatedly throughout the following arguments, it is worth stating.
    If $a \in \CC \setminus \{0\}$ and $z,w \in \CC$, then
    \begin{align}
      \LFPP\left(az, aw\right) \
      &= \ \frac{|a| \fa_{\epsilon}^{-1}}{|a|^{\xi Q} \fa_{\epsilon/|a|}^{-1}} \LFPP[\epsilon/|a|][h(a \cdot) + Q \log|a|](z,w). \label{eq:LFPPScaling}
    \end{align}

    \subsection{Choosing the Parameters in Definition \ref{defn:AnnulusEvents}}
      In this section, we use Lemma \ref{lemma:IndependenceAcrossConcentricAnnuli} to prove the following.
      \begin{lemma}
        \label{lemma:ChoosingParameters}
        There are universal constants $c, A > 0$ and $\alpha \in (7/8, 1)$ such that the following is true.
        Let $0 < \zeta_{-} < \zeta < \zeta_{+} < 1$ with $1 - \zeta_{+} < \g(1 - \zeta)$. 
        Let $\delta \in (0,1)$, and for each $\epsilon \in (0,1)$, let $\cR_{\epsilon} \subset (\epsilon^{1 - \zeta}, \epsilon^{1 - \zeta_{+}}) \cap \{8^j\}_{j \in \ZZ}$ with $\# \cR_{\epsilon} > \frac{1}{3} \# ((\epsilon^{1 - \zeta}, \epsilon^{1 - \zeta_{+}}) \cap \{8^j\}_{j \in \ZZ})$.
        There exists $\epsilon_0 \in (0,1)$ such that for each $z \in \CC$ and each $\epsilon \in (0, \epsilon_0)$,
        \begin{align*}
          P\left\{E_{r, \epsilon}(z) \text{ occurs for some } r \in \cR_{\epsilon} \right\} \
          &\geq \ 1 - c \epsilon^{4(1 - \zeta)}.
        \end{align*}
      \end{lemma}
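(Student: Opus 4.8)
The plan is to fix a reference scale (essentially $r = 1$) and show that the single-annulus event $E_{1,\epsilon}(z)$ occurs with probability close to $1$ when the parameters $\alpha, \delta, A$ are chosen appropriately, then transfer this to all $r \in \cR_\epsilon$ by exact scale invariance of $\LFPP$ together with part~\ref{loc:UniformComparison} of Lemma~\ref{lemma:PropertiesOfLocalizedFieldAndLFPP}, and finally apply Lemma~\ref{lemma:IndependenceAcrossConcentricAnnuli} to the resulting family of events to boost ``occurs for some $r$'' to probability $1 - c\epsilon^{4(1-\zeta)}$. The three structural conditions in Definition~\ref{defn:AnnulusEvents} each require choosing a parameter: condition~\ref{conditions:GoodLipschitzConstant} (and condition~\ref{conditions:Error}) needs $\delta$ and needs $\locLFPP$ to be close to $D_h$ on the annulus, which is where the ratios $\ratios$ enter via rescaling; condition~\ref{conditions:Proportions} needs the constant $A$ large; condition~\ref{conditions:Measurability} is a genericity statement about geodesics not touching the annulus boundary. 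I would treat them in the order $E^{(3)}, E^{(1)}, E^{(4)}, E^{(2)}$, since $E^{(3)}$ fixes the universal $A$ and $\alpha$ and the others are then tuned around that choice.

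\textbf{Step 1 (conditions 3 and the universal constants).} First I would establish that there exist universal $A$ and $\alpha \in (7/8,1)$ such that $\PP[E^{(3)}_{1}(z)]$ is as close to $1$ as we like: by axioms~\ref{axiom:LengthSpace} and the known regularity of $D_h$, for any fixed annulus $\AA_{\alpha,1}(0)$ the ratio $D_h(\text{around }\AA_{\alpha,1}(0)) / D_h(\partial B_\alpha(0),\partial B_1(0))$ is a finite random variable, so its law has a quantile $A$; the same holds for $\LFPP$ by \ref{loc:ConvergenceInProbability} and \ref{loc:UniformComparison} uniformly in small $\epsilon$. Here $A$ and $\alpha$ must be chosen \emph{once and for all}, independent of $\zeta_-,\zeta,\zeta_+,\delta$ — this is the point emphasized after the proposition.

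\textbf{Step 2 (conditions 1 and 4).} With $A,\alpha$ fixed, I would use \ref{loc:UniformComparison} (the uniform ratio $\hat D_h^\epsilon / D_h^\epsilon \to 1$) and the in-probability convergence $\fa_\epsilon^{-1} D_h^\epsilon \to D_h$ on the compact annulus to get that, for $\epsilon$ small, with high probability $\locLFPP(u,v)$ and $D_h(u,v;\AA_{1/2,2}(0))$ differ by a factor $1+\delta$ for the relevant geodesic-endpoint pairs $u \in \partial B_\alpha, v\in\partial B_1$ — this is condition~\ref{conditions:GoodLipschitzConstant} at scale $1$. For condition~\ref{conditions:Error}, the point-to-ball correction over radius $4\epsilon^{1-\zeta_-}$ is negligible compared with $D_h(\partial B_\alpha,\partial B_1)$ because $\epsilon^{1-\zeta_-}\to 0$ while the crossing distance is order $1$, so again this holds with probability $\to 1$; the factor $\ratios$ multiplying $\delta$ there is exactly what the rescaling from scale $1$ to scale $r$ produces. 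Then I rescale: applying \eqref{eq:LFPPScaling} with $a = r$ converts the scale-$1$ statement for $h(r\cdot)+Q\log r$ into the scale-$r$ statement for $h$, at the cost of the prefactor $\frac{r\fa_\epsilon^{-1}}{r^{\xi Q}\fa_{\epsilon/r}^{-1}} = \ratios$ — and since $h(r\cdot)+Q\log r$ has the same law as $h$ up to the additive constant handled by \ref{loc:AddRandomVariableToField}, the probability is unchanged. The field $h_{\epsilon/r}^*$ versus $\hat h_{\epsilon/r}^*$ discrepancy is absorbed by \ref{loc:UniformComparison}.

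\textbf{Step 3 (condition 2 and assembly).} Condition~\ref{conditions:Measurability} is a non-degeneracy statement: a $D_h$-geodesic between $\partial B_\alpha$ and $\partial B_1$ realizing the internal distance in the larger annulus $\AA_{1/2,2}$ must stay off the boundary of $\AA_{1/2,2}$, hence $v$ lies strictly inside the relevant $\locLFPP$-geodesic ball; this holds a.s.\ by confluence/uniqueness of geodesics and can be made to hold with probability $\to 1$ on the finite collection of scales, again transferred by rescaling. The real purpose of condition~\ref{conditions:Measurability}, as the text notes, is to force $E_{r,\epsilon}(z)$ to be measurable with respect to $(h - h_{r}(0))|_{\AA_{r/2,2r}(z)}$ — this requires care because $\hat D_h^\epsilon$ depends on $h$ on a $\frepsilon$-neighborhood, which is why the annuli in conditions 1, 2, 4 are shrunk by $\frepsilon$; since $\epsilon^{1-\zeta} < r$ and $\zeta < \zeta_+$, we have $\frepsilon \ll r$, so the thickened annulus still sits inside $\AA_{r/2,2r}(z)$. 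Having shown $\PP[E_{r,\epsilon}(z)] \geq p$ for all $r \in \cR_\epsilon$ and all $\epsilon < \epsilon_0$, with $p$ the quantile threshold from Lemma~\ref{lemma:IndependenceAcrossConcentricAnnuli} applied with $a = a(\zeta)$ chosen so that $e^{-a\#\cR_\epsilon} \lesssim \epsilon^{4(1-\zeta)}$ — which is possible because $\#\cR_\epsilon \gtrsim \#((\epsilon^{1-\zeta},\epsilon^{1-\zeta_+})\cap\{8^j\}) \asymp (\zeta_+-\zeta)\log\epsilon^{-1}$, and consecutive ratios in $\cR_\epsilon$ are powers of $8$ so the geometric spacing hypothesis $r_{k+1}/r_k \leq s_1$ is met with $s_1 = 1/8$ — the lemma yields exactly the claimed bound.

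\textbf{Main obstacle.} The delicate point is Step 2's uniformity: I need condition~\ref{conditions:GoodLipschitzConstant} to hold simultaneously for \emph{all} pairs $(u,v)$ with a geodesic in the closed sub-annulus, not just a fixed pair, and I need the probability bound to be uniform over the $\asymp\log\epsilon^{-1}$ scales $r\in\cR_\epsilon$ (since the union bound over $r$ must lose only a constant factor, not a $\log$). Uniformity over endpoint pairs comes from the fact that $\fa_\epsilon^{-1}\hat D_h^\epsilon \to D_h$ \emph{as metrics}, i.e.\ uniformly on the compact annulus, rather than pointwise; uniformity over scales is exactly what \eqref{eq:LFPPScaling} buys us, because after rescaling every scale becomes the \emph{same} scale-$1$ event for a field with the law of the GFF — the only scale-dependence left is in the deterministic prefactor $\ratios$, which is pulled out explicitly. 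Getting the bookkeeping of which annulus ($\AA_{\alpha r,r}$ vs.\ $\AA_{r/2,2r}$ vs.\ the $\frepsilon$-shrunk versions) appears where, so that measurability in Step 3 genuinely holds, is the other place where care is required.
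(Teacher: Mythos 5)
Your overall strategy — prove each $E^{(j)}_{r,\epsilon}(z)$ has probability $\geq p$ at a reference scale, transfer to scale $r$ via \eqref{eq:LFPPScaling} and \ref{loc:UniformComparison}, check measurability, and boost via Lemma \ref{lemma:IndependenceAcrossConcentricAnnuli} — is the paper's strategy. But there is a genuine gap in the assembly (Step 3) that undermines the universality you correctly identify as essential in Step 1.

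In Step 3 you invoke Lemma \ref{lemma:IndependenceAcrossConcentricAnnuli} ``with $a = a(\zeta)$ chosen so that $e^{-a\#\cR_\epsilon} \lesssim \epsilon^{4(1-\zeta)}$,'' justified by $\#\cR_\epsilon \asymp (\zeta_+ - \zeta)\log\epsilon^{-1}$. But $p = p(a, s_1, s_2)$ in Lemma \ref{lemma:IndependenceAcrossConcentricAnnuli}, so a $\zeta$-dependent $a$ makes $p$ $\zeta$-dependent; since $A$ and $\alpha$ are then chosen so that each $E^{(j)}$ has probability $\geq p$, they too become $\zeta$-dependent — contradicting both the statement of the lemma and your own Step 1, which promises to fix $A,\alpha$ ``once and for all, independent of $\zeta_-,\zeta,\zeta_+,\delta$.'' You have an internal circularity: Step 1 fixes $p$ close to $1$ to determine $A,\alpha$, but Step 3 actually determines $p$ from $a(\zeta)$. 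The fix is to choose $a$ universally (the paper uses $a = 24\log 8$ with $s_1 = 1/8$, $s_2 = 1/2$) and then use the hypothesis $1 - \zeta_+ < \tfrac12(1-\zeta)$, which you never invoke: it gives $\zeta_+ - \zeta > \tfrac12(1-\zeta)$, hence $\#\cR_\epsilon > \tfrac{1-\zeta}{6\log 8}\log\epsilon^{-1} - \tfrac13$, and the universal $a$ then produces exactly the exponent $4(1-\zeta)$ with a universal $c$.

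A lesser issue: you attribute condition \ref{conditions:Measurability} to ``confluence/uniqueness of geodesics.'' In fact the paper proves it (Lemma \ref{lemma:Condition2}) via Lemma \ref{lemma:LowerBoundForDistancesInANarrowAnnulus}, an extension of \cite[Lemma 2.11]{ExistenceAndUniqueness}: crossing the thin annulus $\AA_{\alpha r, r}$ while staying inside is expensive once $\alpha$ is close to $1$, which forces the $\locLFPP$- (resp.\ $D_h$-) geodesic to remain in the thickened annulus. This is also where the constraint pinning $\alpha$ down comes from — not condition \ref{conditions:Proportions}, which holds for any $\alpha \in (7/8,1)$ once $A$ is large. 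The logical order in the paper is: fix $a$ universally $\Rightarrow$ fix $p$ $\Rightarrow$ fix $\alpha = \alpha(p)$ via condition 2 $\Rightarrow$ fix $A = A(p,\alpha)$ via condition 3.
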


      We will start by proving the events $E_{r,\epsilon}(z)$ are determined by the field restricted to a small annulus.

      \begin{lemma}
        \label{lemma:Measurability}
        For each $z \in \CC$, $\alpha \in (7/8, 1)$, $\epsilon \in (0,1)$, $r \in (4 \frepsilon + 16 \epsilon^{1 - \zeta_{-}}, \infty)$, $\alpha \in (7/8, 1)$, $A > 1$, $\delta \in (0,1)$, and $\zeta_{-} \in (0,1)$, 
        \begin{align*}
          E_{r,\epsilon}(z) \
          &\in \ \sigma\{(h - h_{4r}(z))|_{\AA_{r/2, 2r}(z)}\}.
        \end{align*}

        \begin{proof}
          This proof is the same as the proof of \cite[Lemma 3.8]{ConformalCovariance}, but we include it for completeness.
          The constraint $r \in (4 \frepsilon + 16 \epsilon^{1 - \zeta_{-}}, \infty)$ ensures each condition in Definition \ref{defn:AnnulusEvents} makes sense.
          Subtracting $h_{4r}(z)$ from $h$ scales both $D_h$ and $\locLFPP$ by $e^{-\xi h_{4r}(z)}$, which doesn't affect the occurrence of any of the conditions in Definition \ref{defn:AnnulusEvents}.
          So we may assume $h_{4r}(z) = 0$.

          $E_{r,\epsilon}^{(3)}(z)$ is determined by the internal metrics of $D_h$ and $\locLFPP$ on $\AA_{\alpha r,r}(z)$.
          Since $r > 4 \frepsilon$, axiom \ref{axiom:Locality} and \ref{loc:Locality} from Lemma \ref{lemma:PropertiesOfLocalizedFieldAndLFPP} imply these internal metrics are determined by $h|_{\AA_{r/2, 2r}(z)}$.

          The internal metrics of $D_h$ and $\locLFPP$ on $\overline{\AA_{\alpha r, r}(z)}$ and of $D_h$ on $\AA_{r/2, 2r}(z)$ and $\locLFPP$ on $\AA_{r/2 + \frepsilon, 2r - \frepsilon}(z)$ are measurable with respect to $h|_{\AA_{r/2, 2r}(z)}$ by axiom \ref{axiom:Locality} and \ref{loc:Locality} from Lemma \ref{lemma:PropertiesOfLocalizedFieldAndLFPP}.
          So for every $u \in \partial B_{\alpha r}(z)$, both $\locLFPP(u, \partial \AA_{r/2 + \frepsilon, 2r - \frepsilon}(z))$ and $D_h(u, \partial \AA_{r/2, 2r}(z))$, which are determined by said internal metrics, are also measurable with respect to $h|_{\AA_{r/2, 2r}(z)}$.
          Moreover, the open $D_h$-geodesic ball of radius $D_h(u, \partial \AA_{r/2, 2r}(z))$ centered at $u$ and the open $\locLFPP$-geodesic ball of radius $\locLFPP(u, \partial \AA_{r/2 + \frepsilon, 2r - \frepsilon}(z))$ centered at $u$ are, by definition, contained in $\AA_{r/2, 2r}(z)$ and $\AA_{r/2 + \frepsilon, 2r - \frepsilon}(z)$ respectively.
          It follows that $E_{r, \epsilon}^{(2)}(z)$ is measurable with respect to $h|_{\AA_{r/2, 2r}(z)}$.

          All of the quantities involved in the definition of $E_{r,\epsilon}^{(4)}$ are determined by the internal metrics of $D_h$ on $\AA_{r/2, 2r}(z)$ and of $\locLFPP$ on $\AA_{r/2 + \frepsilon, 2r - \frepsilon}(z)$, so $E_{r, \epsilon}^{(4)}(z) \in \sigma\{h|_{\AA_{r/2, 2r}(z)}\}$.

          It is not true that $E_{r,\epsilon}^{(1)}(z) \in \sigma\{h|_{\AA_{r/2, 2r}(z)}\}$, but we can still show that $E_{r,\epsilon}^{(1)}(z) \cap E_{r, \epsilon}^{(2)}(z) \in \sigma\{h|_{\AA_{r/2, 2r}(z)}\}$.
          Let $E_{r,\epsilon}^{(1')}(z)$ be the event that for each $u \in \partial B_{\alpha r}(z)$ and each $v \in \partial B_r(z)$ such that there is a $D_h(\cdot,\cdot; \AA_{r/2,2r}(z))$-geodesic (resp $\locLFPP(\cdot, \cdot; \AA_{r/2 + \frepsilon, 2r - \frepsilon}(z))$-geodesic) from $u$ to $v$ contained in $\overline{\AA_{\alpha r, r}(z)}$, we have respectively
          \begin{align*}
            &\locLFPP\left(u,v; \AA_{r/2 + \frepsilon, 2r - \frepsilon}(z) \right) \
            \leq \ \ratios (1 + \delta) D_h\left(u,v; \AA_{r/2, 2r}(z) \right), \\
            &D_h\left(u,v; \AA_{r/2, 2r}(z) \right) \
            \leq \ \invratios (1 + \delta) \locLFPP(u,v; \AA_{r/2 + \frepsilon, 2r - \frepsilon}(z)).
          \end{align*}
          We claim that $E_{r, \epsilon}^{(1)}(z) \cap E_{r,\epsilon}^{(2)}(z) = E_{r,\epsilon}^{(1')}(z) \cap E_{r,\epsilon}^{(2)}(z)$.

          Assume $E_{r,\epsilon}^{(1)}(z) \cap E_{r,\epsilon}^{(2)}(z)$ occurs, and assume $u \in \partial B_{\alpha r}(z)$ and $v \in \partial B_r(z)$ are such that there is a $D_h(\cdot, \cdot; \AA_{r/2, 2r}(z))$-geodesic from $u$ to $v$ contained in $\overline{\AA_{\alpha r, r}(z)}$.
          Then $D_h(u,v; \AA_{r/2, 2r}(z)) = D_h(u,v; \overline{\AA_{\alpha r, r}(z)})$, so the occurrence of $E_{r,\epsilon}^{(2)}(z)$ implies
          \begin{align*}
            \locLFPP(u,v) \
            &= \ \locLFPP(u,v; \AA_{r/2 + \frepsilon, 2r - \frepsilon}(z)).
          \end{align*}
          Then since $E_{r, \epsilon}^{(1)}(z)$ occurs,
          \begin{align*}
            \locLFPP\left(u, v; \AA_{r/2 + \frepsilon, 2r - \frepsilon}(z) \right)
            &= \locLFPP(u,v)
            \leq \ratios (1 + \delta) D_h(u,v; \AA_{r/2, 2r}(z)).
          \end{align*}
          Likewise, if there is a $\locLFPP(\cdot, \cdot; \AA_{r/2 + \frepsilon, 2r - \frepsilon}(z))$-geodesic from $u$ to $v$ contained in $\overline{\AA_{\alpha r, r}(z)}$, then $\locLFPP(u,v; \AA_{r/2 + \frepsilon, 2r - \frepsilon}(z)) = \locLFPP(u,v; \overline{\AA_{\alpha r, r}(z)})$, hence the occurrence of $E_{r,\epsilon}^{(2)}(z)$ implies $D_h(u,v) = D_h(u,v; \AA_{r/2, 2r}(z))$.
          Since $E_{r,\epsilon}^{(1)}(z)$ occurs,
          \begin{align*}
            D_h\left(u,v; \AA_{r/2, 2r}(z) \right) \
            &= \ D_h(u,v) \
            \leq \ \invratios (1 + \delta) \locLFPP\left(u,v; \AA_{r/2 + \frepsilon, 2r - \frepsilon}(z) \right)
          \end{align*}
          Thus, $E_{r, \epsilon}^{(1)}(z) \cap E_{r,\epsilon}^{(2)}(z) \subset E_{r,\epsilon}^{(1')}(z) \cap E_{r, \epsilon}^{(2)}(z)$.

          Conversely, assuming $E_{r,\epsilon}^{(1')}(z) \cap E_{r,\epsilon}^{(2)}(z)$ occurs and that $u \in \partial B_{\alpha r}(z)$ and $v \in \partial B_r(z)$ are such that there is a $D_h$-geodesic from $u$ to $v$ contained in $\overline{\AA_{\alpha r, r}(z)}$, then this geodesic is also a $D_h(\cdot, \cdot;\AA_{r/2, 2r}(z))$-geodesic, so since $E_{r,\epsilon}^{(1')}(z)$ occurs,
          \begin{align*}
            \locLFPP\left(u,v \right) \
            &\leq \ \locLFPP\left(u,v; \AA_{r/2 + \frepsilon, 2r - \frepsilon}(z) \right) \\
            &\leq \ \ratios (1 + \delta) D_h\left(u,v; \AA_{r/2, 2r}(z) \right).
          \end{align*}
          If instead there is a $\locLFPP$-geodesic from $u$ to $v$ contained in $\overline{\AA_{\alpha r, r}(z)}$, this geodesic is also a $\locLFPP(\cdot, \cdot; \AA_{r/2 + \frepsilon, 2r - \frepsilon}(z))$-geodesic, so assuming $E_{r,\epsilon}^{(1')}(z)$ occurs, we get
          \begin{align*}
            D_h\left(u,v \right) \
            &\leq \ D_h\left(u,v; \AA_{r/2, 2r}(z) \right) \
            \leq \ \invratios (1 + \delta) \locLFPP\left(u,v; \AA_{r/2 + \frepsilon, 2r - \frepsilon}(z) \right).
          \end{align*}

          Since $E_{r,\epsilon}^{(1')}(z)$ is determined by the internal metrics of $D_h$ on $\AA_{r/2, 2r}(z)$ and of $\locLFPP$ on $\AA_{r/2 + \frepsilon, 2r - \frepsilon}(z)$, we have $E_{r, \epsilon}^{(1')}(z) \in \sigma\{h|_{\AA_{r/2, 2r}(z)}\}$.
          It follows that $E_{r,\epsilon}^{(1)}(z) \cap E_{r, \epsilon}^{(2)}(z) = E_{r, \epsilon}^{(1')}(z) \cap E_{r,\epsilon}^{(2)}(z) \in \sigma\{h|_{\AA_{r/2, 2r}(z)}\}$.
        \end{proof}
      \end{lemma}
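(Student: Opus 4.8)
The approach mirrors the proof of \cite[Lemma 3.8]{ConformalCovariance}. The plan is to first reduce to the case $h_{4r}(z) = 0$: subtracting the random constant $h_{4r}(z)$ from $h$ multiplies both $D_h$ and $\locLFPP$ by the positive constant $e^{-\xi h_{4r}(z)}$ (by axiom \ref{axiom:WeylScaling} together with part \ref{loc:AddRandomVariableToField} of Lemma \ref{lemma:PropertiesOfLocalizedFieldAndLFPP}), and every condition in Definition \ref{defn:AnnulusEvents} is either a homogeneous inequality between two distances or an assertion about which curves are geodesics or lie in a prescribed set, hence is invariant under a global rescaling of the metrics. So it suffices to show $E_{r,\epsilon}(z) \in \sigma\{h|_{\AA_{r/2, 2r}(z)}\}$. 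Throughout, the size constraint $r > 4\frepsilon + 16\epsilon^{1 - \zeta_{-}}$ together with $\alpha > 7/8$ is used only to guarantee the inclusions $B_{\frepsilon}(\overline{\AA_{\alpha r, r}(z)}) \subset \AA_{r/2, 2r}(z)$, $B_{\frepsilon}(\AA_{r/2 + \frepsilon, 2r - \frepsilon}(z)) \subset \AA_{r/2, 2r}(z)$, and $B_{4\epsilon^{1 - \zeta_{-}}}(u) \subset \AA_{r/2, 2r}(z)$ for every $u \in \overline{\AA_{\alpha r, r}(z)}$; these are exactly the inclusions that make the locality statements below give dependence on $h|_{\AA_{r/2, 2r}(z)}$.

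Next I would dispose of $E_{r,\epsilon}^{(2)}(z)$, $E_{r,\epsilon}^{(3)}(z)$, and $E_{r,\epsilon}^{(4)}(z)$, each of which can be written purely in terms of the internal metric of $D_h$ on $\AA_{r/2, 2r}(z)$ and of $\locLFPP$ on $\AA_{r/2 + \frepsilon, 2r - \frepsilon}(z)$; by axiom \ref{axiom:Locality} and part \ref{loc:Locality} of Lemma \ref{lemma:PropertiesOfLocalizedFieldAndLFPP}, together with the inclusions above, both internal metrics are measurable with respect to $h|_{\AA_{r/2, 2r}(z)}$ (the point of shrinking the $\locLFPP$-annulus by $\frepsilon$ is precisely that $\hat D_h^\epsilon(\cdot, \cdot; \AA_{r/2 + \frepsilon, 2r - \frepsilon}(z))$ depends only on $h|_{\AA_{r/2, 2r}(z)}$). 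For $E^{(3)}$, one replaces a curve realizing $D_h(\partial B_{\alpha r}(z), \partial B_r(z))$ by its last crossing of the sub-annulus from the inner to the outer boundary, which lies in $\overline{\AA_{\alpha r, r}(z)}$, and any curve around $\AA_{\alpha r, r}(z)$ is by definition contained in $\AA_{\alpha r, r}(z)$; since $\overline{\AA_{\alpha r, r}(z)}$ sits inside both large annuli, these quantities are determined by the above internal metrics. For $E^{(4)}$ all quantities are already internal metrics, evaluated over $u \in \overline{\AA_{\alpha r, r}(z)}$ and $v \in \overline{B_{4\epsilon^{1 - \zeta_{-}}}(u)} \subset \AA_{r/2, 2r}(z)$. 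For $E^{(2)}$, the radii $D_h(u, \partial \AA_{r/2, 2r}(z))$ and $\locLFPP(u, \partial \AA_{r/2 + \frepsilon, 2r - \frepsilon}(z))$ are internal-metric quantities, and the corresponding open geodesic balls are, by the definition of distance-to-boundary, contained in those annuli, so membership of $v$ in them is again read off the internal metrics; the hypotheses $D_h(u,v; \overline{\AA_{\alpha r, r}(z)}) = D_h(u,v; \AA_{r/2, 2r}(z))$ and its $\locLFPP$-analogue are internal-metric statements as well.

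The genuine difficulty, which I expect to be the main obstacle, is $E_{r,\epsilon}^{(1)}(z)$: it refers to the \emph{ambient} distances $\locLFPP(u,v)$ and $D_h(u,v)$ and to genuine $D_h$- and $\hat D_h^\epsilon$-geodesics, none of which is individually measurable with respect to $h|_{\AA_{r/2, 2r}(z)}$. I would introduce the internalized event $E_{r,\epsilon}^{(1')}(z)$ obtained from $E_{r,\epsilon}^{(1)}(z)$ by replacing every ambient distance with the internal metric on the appropriate large annulus ($\AA_{r/2, 2r}(z)$ for $D_h$ and $\AA_{r/2 + \frepsilon, 2r - \frepsilon}(z)$ for $\locLFPP$) and "$D_h$-geodesic" (resp. "$\hat D_h^\epsilon$-geodesic") by "$D_h(\cdot, \cdot; \AA_{r/2, 2r}(z))$-geodesic" (resp. "$\hat D_h^\epsilon(\cdot, \cdot; \AA_{r/2 + \frepsilon, 2r - \frepsilon}(z))$-geodesic"); then $E_{r,\epsilon}^{(1')}(z) \in \sigma\{h|_{\AA_{r/2, 2r}(z)}\}$ by the same locality inputs. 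It remains to prove the set identity $E_{r,\epsilon}^{(1)}(z) \cap E_{r,\epsilon}^{(2)}(z) = E_{r,\epsilon}^{(1')}(z) \cap E_{r,\epsilon}^{(2)}(z)$, which finishes the proof since the right-hand side equals $E_{r,\epsilon}^{(1')}(z) \cap E_{r,\epsilon}^{(2)}(z) \cap E_{r,\epsilon}^{(3)}(z) \cap E_{r,\epsilon}^{(4)}(z) \in \sigma\{h|_{\AA_{r/2, 2r}(z)}\}$.

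This identity is the technical heart. For "$\supseteq$": a genuine $D_h$-geodesic from $u$ to $v$ contained in $\overline{\AA_{\alpha r, r}(z)}$ is, since $\overline{\AA_{\alpha r, r}(z)} \subset \AA_{r/2, 2r}(z)$, simultaneously a $D_h(\cdot, \cdot; \AA_{r/2, 2r}(z))$-geodesic, so the bound of $E^{(1')}$ applies and then $\locLFPP(u,v) \leq \locLFPP(u,v; \AA_{r/2 + \frepsilon, 2r - \frepsilon}(z))$ gives the ambient bound of $E^{(1)}$; the $\hat D_h^\epsilon$-geodesic case is symmetric. For "$\subseteq$": a $D_h(\cdot, \cdot; \AA_{r/2, 2r}(z))$-geodesic from $u$ to $v$ in $\overline{\AA_{\alpha r, r}(z)}$ forces $D_h(u,v; \overline{\AA_{\alpha r, r}(z)}) = D_h(u,v; \AA_{r/2, 2r}(z))$, whereupon the first bullet of $E^{(2)}$ places $v$ inside the open $\locLFPP$-geodesic ball of radius $\locLFPP(u, \partial \AA_{r/2 + \frepsilon, 2r - \frepsilon}(z))$ centered at $u$, forcing $\locLFPP(u,v) = \locLFPP(u,v; \AA_{r/2 + \frepsilon, 2r - \frepsilon}(z))$, and then the bound supplied by $E^{(1)}$ transfers to the internal bound of $E^{(1')}$; the other case is symmetric, using the second bullet of $E^{(2)}$. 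The role of $E^{(2)}$ is precisely to rule out the pathology that a curve between boundary points of the sub-annulus could realize the internal distance inside $\overline{\AA_{\alpha r, r}(z)}$ for one metric while failing to realize the ambient distance, so that the two formulations coincide on $E^{(2)}$; carrying out this conversion carefully — and in particular checking that the geodesic balls of radius equal to distance-to-boundary are indeed contained in the relevant annuli, so that membership in them matches the internal picture — is where I would need to be most careful.
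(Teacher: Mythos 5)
Your proposal is correct and follows essentially the same route as the paper's proof: the same reduction to $h_{4r}(z)=0$, the same treatment of $E_{r,\epsilon}^{(2)}(z)$, $E_{r,\epsilon}^{(3)}(z)$, $E_{r,\epsilon}^{(4)}(z)$ via the internal metrics of $D_h$ on $\AA_{r/2,2r}(z)$ and of the localized LFPP metric on the slightly shrunken annulus, and the same key device of introducing the internalized event $E_{r,\epsilon}^{(1')}(z)$ and proving $E_{r,\epsilon}^{(1)}(z)\cap E_{r,\epsilon}^{(2)}(z)=E_{r,\epsilon}^{(1')}(z)\cap E_{r,\epsilon}^{(2)}(z)$ by the same two inclusions using the two bullets of $E_{r,\epsilon}^{(2)}(z)$. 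There is no substantive difference from the argument in the paper (both mirror \cite[Lemma 3.8]{ConformalCovariance}).
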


      \begin{lemma}
        \label{lemma:Condition1}
        Fix $\delta, p \in (0,1)$, $\alpha \in (7/8, 1)$, and $\zeta_{-} \in (0,1)$.
        There exists $\epsilon_0 = \epsilon_0(\delta, p, \alpha, \zeta_{-}) \in (0,1)$ such that for each $z \in \CC$, each $\epsilon \in (0, \epsilon_0)$, and each $r \in (\epsilon^{1 - \zeta_{-}}, 1)$, $E_{r,\epsilon}^{(1)}(z)$ occurs with probability at least $p$.

        \begin{proof}
          By \ref{loc:TranslationInvariance} in Lemma \ref{lemma:PropertiesOfLocalizedFieldAndLFPP}, axiom \ref{axiom:CoordinateChange}, and translation invariance of the law of a whole-plane GFF modulo additive constants, it will suffice to prove the claim when $z = 0$.
          Using the convergence in probability of $\LFPP$ and $\locLFPP$ to $D_h$, we can choose $R > 0$ and $\epsilon_0 \in (0,1)$ such that for each $\epsilon \in (0, \epsilon_0)$, it holds with probability at least $1 - (1 - p)/3$ that
          \begin{align}
            \LFPP\left(u,v \right) \
            &= \ \LFPP\left(u,v; B_R(0) \right) \ \forall u,v \in \overline{B_1(0)}, \label{eq:Condition1InternalMetricLFPP} \\
            \locLFPP\left(u,v \right) \
            &= \ \locLFPP\left(u,v; B_R(0) \right) \ \forall u,v \in \overline{B_1(0)}. \label{eq:Condition1InternalMetricLocalizedLFPP}
          \end{align}
          Fix $\delta' \in (0,1)$ with $1 + \delta' < \sqrt{1 + \delta}$.
          Apply \ref{loc:UniformComparison} from Lemma \ref{lemma:PropertiesOfLocalizedFieldAndLFPP} to decrease $\epsilon_0$ so that for each $\epsilon \in (0, \epsilon_0)$, it holds with probability at least $1 - (1-p)/3$ that
          \begin{align}
            \frac{1}{1 + \delta'} D_h^{\epsilon}(u,v; B_R(0)) \
            &\leq \ \hat{D}_h^{\epsilon}(u,v; B_R(0)) \
            \leq \ \left(1 + \delta' \right) D_h^{\epsilon}(u,v; B_R(0)) \ \forall u,v \in B_R(0) \label{eq:Condition1CompareLFPPAndLocalizedLFPP}
          \end{align}
          Use the convergence in probability of $\LFPP$ and $D_h$ to shrink $\epsilon_0$ further so that for each $\epsilon \in (0, \epsilon_0^{\zeta_{-}})$, it holds with probability at least $1 - (1-p)/3$ that
          \begin{align}
            \frac{1}{1 + \delta'} \LFPP\left(u,v \right) \
            &\leq \ D_h(z,w) \
            \leq \ (1 + \delta') \LFPP\left(u,v \right) \ \forall u \in \partial B_{\alpha}(0), v \in \partial B_1(0).
            \label{eq:Condition1CompareLFPPAndDh}
          \end{align}

          If $\epsilon \in (0, \epsilon_0)$ and  $r \in (\epsilon^{1 - \zeta_{-}}, 1)$, then with probability at least $p$, for all $u \in \partial B_{\alpha r}(0)$ and $v \in \partial B_r(0)$,
          \begin{align*}
            \locLFPP(u,v) \
            &\leq \ \left(1 + \delta' \right) \LFPP(u,v) & (\text{by \eqref{eq:Condition1InternalMetricLFPP}, \eqref{eq:Condition1InternalMetricLocalizedLFPP}, \eqref{eq:Condition1CompareLFPPAndLocalizedLFPP}}) \\
            &= \ \left(1 + \delta' \right) r^{\xi Q} e^{\xi h_r(0)} \ratios \LFPP[\epsilon/r][h(r \cdot) - h_r(0)](u/r, v/r) & (\text{by \eqref{eq:LFPPScaling}}) \\
            &\leq \ \left(1 + \delta \right) r^{\xi Q} e^{\xi h_r(0)} \ratios D_{h(r \cdot) - h_r(0)}(u/r, v/r) & (\text{by \eqref{eq:Condition1CompareLFPPAndDh}}) \\
            &= \ \left(1 + \delta \right) \ratios D_h(u,v) & (\text{by axioms \ref{axiom:WeylScaling} and \ref{axiom:CoordinateChange}}).
          \end{align*}
          Likewise,
          \begin{align*}
            D_h(u,v) \
            &= \ r^{\xi Q} e^{\xi h_r(0)} D_{h(r \cdot) - h_r(0)}(u/r, v/r) & (\text{by axioms \ref{axiom:WeylScaling} and \ref{axiom:CoordinateChange}}) \\
            &\leq \ (1 + \delta') r^{\xi Q} e^{\xi h_r(0)} \LFPP[\epsilon/r][h(r \cdot) - h_r(0)](u/r, v/r) & (\text{by \eqref{eq:Condition1CompareLFPPAndDh}}) \\
            &= \ (1 + \delta') \invratios \LFPP(u,v) & (\text{by \eqref{eq:LFPPScaling}}) \\
            &\leq \ (1 + \delta) \invratios \locLFPP(u,v) & (\text{by \eqref{eq:Condition1CompareLFPPAndLocalizedLFPP}}).
          \end{align*}
        \end{proof}
      \end{lemma}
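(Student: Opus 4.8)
The plan is to reduce to the case $z = 0$ using translation invariance (via \ref{loc:TranslationInvariance} in Lemma \ref{lemma:PropertiesOfLocalizedFieldAndLFPP}, axiom \ref{axiom:CoordinateChange}, and translation invariance of the law of the whole-plane GFF modulo additive constants), then chain together four high-probability events whose intersection forces $E_{r,\epsilon}^{(1)}(0)$ to occur. The four events are: (i) for $u,v$ in the unit ball, the metrics $D_h^\epsilon$ and $\hat D_h^\epsilon$ equal their own internal metrics on a large ball $B_R(0)$, so that working with internal metrics costs nothing (this uses local uniform convergence in probability of $D_h^\epsilon$ and $\hat D_h^\epsilon$ to $D_h$, which also has this internal-metric property); (ii) a uniform $(1+\delta')$-comparison of $\hat D_h^\epsilon$ and $D_h^\epsilon$ on $B_R(0)$, from \ref{loc:UniformComparison}; (iii) a $(1+\delta')$-comparison of $D_h^\epsilon$ and $D_h$ between $\partial B_\alpha(0)$ and $\partial B_1(0)$, from convergence in probability of $D_h^\epsilon \to D_h$; the key subtlety here is that this comparison must hold at the rescaled parameter $\epsilon/r$, so I would impose it for all $\epsilon$ below a threshold and then use $r > \epsilon^{1-\zeta_-}$, i.e. $\epsilon/r < \epsilon^{\zeta_-}$, to transfer it.

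Each of these events has probability at least $1 - (1-p)/3$ once $\epsilon_0$ is small enough, so their intersection has probability at least $p$ (three events, each missing by at most $(1-p)/3$). On this intersection I would run the computation: starting from $\widehat{D}_h^\epsilon(u,v)$, bound it by $(1+\delta') D_h^\epsilon(u,v)$ using events (i) and (ii); rewrite $D_h^\epsilon$ at scale $r$ using the exact LFPP scaling relation \eqref{eq:LFPPScaling}, which produces the prefactor $r^{\xi Q} e^{\xi h_r(0)} \ratios$ times $D_h^{\epsilon/r}$ of the rescaled field evaluated at $u/r, v/r \in \partial B_\alpha(0) \cup \partial B_1(0)$; apply event (iii) at parameter $\epsilon/r$ to replace $D_h^{\epsilon/r}$ by $(1+\delta') D_{h(r\cdot) - h_r(0)}$; then undo the scaling using axioms \ref{axiom:WeylScaling} and \ref{axiom:CoordinateChange} to return to $D_h(u,v)$. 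The two factors of $(1+\delta')$ combine to $(1+\delta')^2 < 1+\delta$ by the choice $1 + \delta' < \sqrt{1+\delta}$, giving the first inequality defining $E_{r,\epsilon}^{(1)}$. The reverse inequality (bounding $D_h(u,v)$ by $(1+\delta)\invratios \widehat{D}_h^\epsilon(u,v)$) follows the same chain run backwards, using the lower bounds in events (ii) and (iii).

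The main obstacle I anticipate is bookkeeping around the rescaled parameter: event (iii) is needed at parameter $\epsilon/r$ rather than $\epsilon$, and the admissible range $r \in (\epsilon^{1-\zeta_-}, 1)$ means $\epsilon/r$ ranges over $(\epsilon, \epsilon^{\zeta_-})$, so the threshold for event (iii) must be taken as $\epsilon_0^{\zeta_-}$ — I would shrink $\epsilon_0$ so that (iii) holds for all parameters below $\epsilon_0^{\zeta_-}$, hence for $\epsilon/r$ whenever $\epsilon < \epsilon_0$. A secondary point is ensuring $u/r, v/r$ land exactly on $\partial B_\alpha(0)$ and $\partial B_1(0)$ when $u \in \partial B_{\alpha r}(0)$, $v \in \partial B_r(0)$, which is immediate, and that the internal-metric conditions in (i)–(ii) at the rescaled scale are inherited from the unrescaled ones — but since $E_{r,\epsilon}^{(1)}$ as stated only involves point-to-point distances $\widehat D_h^\epsilon(u,v)$ and the internal metric $D_h(u,v; \AA_{r/2,2r}(z))$, I would be careful to phrase the comparison so the internal-metric refinement on the right-hand side is automatic from the geodesic being contained in $\overline{\AA_{\alpha r, r}(z)} \subset \AA_{r/2, 2r}(z)$.
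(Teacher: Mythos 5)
Your proposal matches the paper's proof step for step: the reduction to $z = 0$ via translation invariance, the same three $1 - (1-p)/3$-probability events (internal-metric equality on a large ball, the $\hat D_h^\epsilon$-vs-$D_h^\epsilon$ comparison from \ref{loc:UniformComparison}, and the $D_h^\epsilon$-vs-$D_h$ comparison in the annulus at the rescaled parameter), the rescaling via \eqref{eq:LFPPScaling} with threshold $\epsilon_0^{\zeta_-}$ so that $\epsilon/r < \epsilon_0^{\zeta_-}$, the choice of $\delta'$ with $(1+\delta')^2 < 1+\delta$, and the observation that the internal-metric refinement on the right-hand side of $E_{r,\epsilon}^{(1)}$ is automatic when a geodesic lies in $\overline{\AA_{\alpha r, r}(z)}$. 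The only blemish is the stray ``four high-probability events'' (you correctly work with three), which does not affect the argument.
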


      \begin{lemma}
        \label{lemma:Condition2}
        For each $p, \zeta \in (0,1)$, there exists $\alpha = \alpha(p) \in (7/8, 1)$ and $\epsilon_0 = \epsilon_0(\alpha, p, \zeta) \in (0,1)$ such that for each $z \in \CC$, each $\epsilon \in (0, \epsilon_0)$, and each $r \in (\epsilon^{1 - \zeta},1)$, $P[E_{r,\epsilon}^{(2)}(z)] \geq p$.
      \end{lemma}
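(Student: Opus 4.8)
The plan is to follow the scheme of the proof of Lemma~\ref{lemma:Condition1}. By \ref{loc:TranslationInvariance} of Lemma~\ref{lemma:PropertiesOfLocalizedFieldAndLFPP}, axiom~\ref{axiom:CoordinateChange}, and translation invariance of the whole-plane GFF modulo additive constants, it suffices to treat $z=0$; and using the exact scale invariance \eqref{eq:LFPPScaling}, axioms~\ref{axiom:WeylScaling} and~\ref{axiom:CoordinateChange}, and the comparison of $\locLFPP$ with $\LFPP$ on a large ball via \ref{loc:UniformComparison}---exactly as in Lemma~\ref{lemma:Condition1}---we may rescale by $1/r$, replacing $h$ by the whole-plane-GFF-plus-constant $h(r\,\cdot)+Q\log r$, the regularization parameter by $\epsilon/r\in(\epsilon,\epsilon^{\zeta})$, and the width $\frepsilon$ of Definition~\ref{defn:AnnulusEvents} by $\frepsilon/r\le\epsilon^{\zeta}\log\epsilon^{-1}\to0$, so that it is enough to treat $r=1$ as $\epsilon\to0$. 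Fix $R\ge4$. Using the convergence in probability $\LFPP\to D_h$ and $\locLFPP\to D_h$ locally uniformly (\ref{loc:ConvergenceInProbability} of Lemma~\ref{lemma:PropertiesOfLocalizedFieldAndLFPP}), the ball-to-ball comparisons of \cite{UpToConstants} (Proposition~\ref{prop:UpToConstants}), \ref{loc:UniformComparison}, and the tightness estimates for $D_h$ of \cite{TightnessOfSubcriticalLFPP,WeakLQGMetrics}, we may arrange that for $\epsilon$ below a threshold, with probability at least $1-\tfrac12(1-p)$: (i)~$D_h$- and $\locLFPP$-distances in $B_R(0)$, including internal metrics on connected subsets, are comparable up to a fixed constant $K_0$ at all scales down to $\epsilon^{1-\zeta}$, and $D_h$- and $\locLFPP$-geodesics between points of $\overline{B_2(0)}$ stay in $B_R(0)$; (ii)~$D_h(\partial B_{1/2}(0),\partial B_{7/8}(0))\wedge D_h(\partial B_1(0),\partial B_2(0))\ge m$ and $\operatorname{diam}_{D_h}(\AA_{1/2,2}(0))\le\Delta$ for fixed $m,\Delta\in(0,\infty)$; (iii)~simultaneously over all Euclidean squares of side $\ell\in[\epsilon^{1-\zeta},1/2]$ meeting $\overline{\AA_{1/2,2}(0)}$, the $D_h$-distance across and the $D_h$-diameter of the square both lie in $[\ell^{\xi Q+o_\ell(1)},\ell^{\xi Q-o_\ell(1)}]$---the union bound over the $\asymp1/\ell$ squares of a given side costs only $\ell^{o_\ell(1)}$ because these distances have upper and lower tails decaying faster than any power of the deviation; and (iv)~$D_h(u,\partial\AA_{1/2+\frepsilon/r,\,2-\frepsilon/r}(0))\ge\tfrac12 D_h(u,\partial\AA_{1/2,2}(0))\ge\tfrac12 m$ for all $u\in\partial B_\alpha(0)$, $\alpha\in(7/8,1)$.

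Granting this, it suffices to prove the following geometric claim: for every $\eta\in(0,1)$ there is $\alpha=\alpha(\eta)\in(7/8,1)$ such that, on the above event, for all $u\in\partial B_\alpha(0)$ and $v\in\partial B_1(0)$,
\[
  D_h\big(u,v;\overline{\AA_{\alpha,1
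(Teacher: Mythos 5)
Your proposal stops mid-sentence (the geometric claim is cut off at ``$D_h(u,v;\overline{\AA_{\alpha,1}}\ldots$''), so it does not constitute a complete proof. More importantly, the cut-off claim you were about to state is essentially the content of the paper's Lemma \ref{lemma:LowerBoundForDistancesInANarrowAnnulus} --- a narrow-annulus estimate asserting that once $\alpha$ is close to $1$, $D_h(u,v;\overline{\AA_{\alpha r,r}(z)})$ is much larger than $D_h(u,v)$ whenever $u\in\partial B_{\alpha r}(z)$, $v\in\partial B_r(z)$, and $D_h(u,v)$ exceeds a fixed threshold. The paper does \emph{not} re-derive this; it obtains it as a direct extension of \cite[Lemma 2.11]{ExistenceAndUniqueness}, combined with the $\locLFPP\leftrightarrow\LFPP$ comparison from \ref{loc:UniformComparison} and the scaling relation \eqref{eq:LFPPScaling} to transfer it to $\locLFPP$ in both directions. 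You instead propose to reprove a version of this estimate from scratch via union bounds over Euclidean squares and quantitative distance-tail bounds. That is a much heavier road: the narrow-annulus lower bound is a nontrivial geometric fact (a constrained path in an annulus of width $1-\alpha$ must traverse length growing as $\alpha\to 1$), and it is not at all clear that your conditions (i)--(iv), even in their intended final form, close this gap without essentially repeating the proof of \cite[Lemma 2.11]{ExistenceAndUniqueness}.

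There are also two more local issues. First, your rescaling step needs to track the $\ratios$ factor explicitly --- when you convert a $\locLFPP$-threshold at scale $r$ to one at scale $1$ via \eqref{eq:LFPPScaling}, the threshold gets multiplied by $\ratios r^{\xi Q} e^{\xi h_r(z)}$, which is why these ratios appear in \eqref{eq:SecondClaim}--\eqref{eq:ThirdClaim}; simply sending $\frepsilon/r\to 0$ does not make the rescaled problem identical to the $r=1$ problem. Second, the quantity $\alpha$ in Lemma \ref{lemma:Condition2} must be chosen depending only on $p$ (not on $\zeta$, and not on the auxiliary thresholds $s,S$ you would produce), since it has to be fixed once and fed into the definition of the events $E_{r,\epsilon}(z)$ and into the choice of $A$ in Lemma \ref{lemma:Condition3}; the paper arranges this by taking $\alpha_* = \alpha_{**}(s/8, 8S, 1-(1-p)/3)$ in Lemma \ref{lemma:LowerBoundForDistancesInANarrowAnnulus} before $\epsilon_0$ is chosen. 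Your sketch makes $\alpha$ depend on an $\eta$ that in turn seems tied to the $\epsilon$-dependent thresholds, which would break the parameter dependencies needed downstream. In short: cite and extend \cite[Lemma 2.11]{ExistenceAndUniqueness} as the paper does rather than rebuilding the narrow-annulus bound, and be careful about the scaling factors and the order of quantifiers for $\alpha$.
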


      To prove Lemma \ref{lemma:Condition2}, we will need the following, which is an extension of \cite[Lemma 2.11]{ExistenceAndUniqueness}.

      \begin{lemma}
        \label{lemma:LowerBoundForDistancesInANarrowAnnulus}
        If $D$ and $\tilde{D}$ are metrics on $\CC$, $s > 0$, $\alpha \in (1/2, 1)$, $z \in \CC$, and $r \in \hlint{0}{1}$, define 
        \begin{align*}
          I(D, \tilde{D}, s, \alpha, r, z) \
          &\coloneqq \ \inf\left\{D(u,v; \overline{\AA_{\alpha r, r}(z)}) : u \in \partial B_{\alpha r}(z), v \in \partial B_r(z), \tilde{D}(u,v) > s \right\}.
        \end{align*}
        Then for each $0 < s < S$ and each $p, \zeta \in (0,1)$, there exists $\alpha_{*} = \alpha_{*}(s,S, p) \in (1/2, 1)$ such that for each $\alpha \in \hrint{\alpha_{*}}{1}$, there exists $\epsilon_0 = \epsilon_0(\alpha, \zeta, p) \in (0,1)$ such that for each $z \in \CC$, each $r \in (\epsilon^{1 - \zeta}, 1)$, and each $\epsilon \in (0, \epsilon_0)$, with probability at least $p$,
        \begin{align}
          I(\locLFPP, D_h, s r^{\xi Q} e^{\xi h_r(z)}, \alpha, r, z) \
          &\geq \ S \ratios r^{\xi Q} e^{\xi h_r(z)}, \label{eq:SecondClaim} \\
          I\left(D_h, \locLFPP, s \ratios r^{\xi Q} e^{\xi h_r(z)}, \alpha, r, z \right) \
          &\geq \ S r^{\xi Q} e^{\xi h_r(z)}. \label{eq:ThirdClaim}
        \end{align}

        \begin{proof}
          Lemma 2.11 from \cite{ExistenceAndUniqueness} guarantees the existence of $\alpha_{**} = \alpha_{**}(s, S, p) \in (1/2,1)$ such that for each $\alpha \in \hrint{\alpha_{**}}{1}$, each $z \in \CC$, and each $r \in \hlint{0}{1}$, it holds with probability at least $p$ that
          \begin{align}
            I(D_h, D_h, s r^{\xi Q} e^{\xi h_r(z)}, \alpha, r, z) \
            &\geq \ S r^{\xi Q} e^{\xi h_r(z)}.
            \label{eq:Condition2SublemmaPreliminaryStep}
          \end{align}
          We can then take $\alpha_{*}(s,S, p) \coloneqq \alpha_{**}(s/8, 8S, 1 - (1 - p)/3)$ (in particular, not depending on $\zeta$), and deduce \eqref{eq:SecondClaim} and \eqref{eq:ThirdClaim} by means of \ref{loc:UniformComparison} from Lemma \ref{lemma:PropertiesOfLocalizedFieldAndLFPP}, the convergence in probability of $\LFPP$ and $\locLFPP$ to $D_h$, and the exact scaling relation \eqref{eq:LFPPScaling} for LFPP.
          Use of \eqref{eq:LFPPScaling} is why the $\ratios$ terms in \eqref{eq:SecondClaim} and \eqref{eq:ThirdClaim} appear.
        \end{proof}
      \end{lemma}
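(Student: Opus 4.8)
The natural starting point is Lemma~2.11 of \cite{ExistenceAndUniqueness}, which supplies a constant $\alpha_{**} = \alpha_{**}(s,S,p) \in (1/2,1)$ such that for $\alpha \in \hrint{\alpha_{**}}{1}$, every $z \in \CC$, and every $r \in \hlint{0}{1}$, one has $I(D_h, D_h, s r^{\xi Q} e^{\xi h_r(z)}, \alpha, r, z) \ge S r^{\xi Q} e^{\xi h_r(z)}$ with probability at least $p$. The task of the present lemma is to transfer this pure-$D_h$ statement to the mixed statements \eqref{eq:SecondClaim}, \eqref{eq:ThirdClaim}, which involve both $\locLFPP$ and $D_h$. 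The mechanism is to rescale the annulus $\AA_{\alpha r, r}(z)$ to unit size, where $\locLFPP$ — which has no exact scaling — can be sandwiched between $\LFPP$, which obeys \eqref{eq:LFPPScaling}, and the limit metric $D_h$.

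First I would reduce to $z = 0$, using translation invariance of the law of the whole-plane GFF modulo additive constants together with \ref{loc:TranslationInvariance} of Lemma~\ref{lemma:PropertiesOfLocalizedFieldAndLFPP} and axiom~\ref{axiom:CoordinateChange} (the quantity $h_r(z)$ becomes $h_r(0)$ under the translation). Next, set $\tilde h := h(r\cdot) - h_r(0)$, a whole-plane GFF with vanishing circle average over $\partial B_1(0)$; in particular $\tilde h \overset{d}{=} h$, so $D_{\tilde h} \overset{d}{=} D_h$ and $\LFPP[\delta][\tilde h] \overset{d}{=} \LFPP[\delta]$ for each $\delta > 0$. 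Axioms~\ref{axiom:WeylScaling} and \ref{axiom:CoordinateChange} give the exact identity $D_h(u,v;\overline{\AA_{\alpha r, r}(0)}) = r^{\xi Q} e^{\xi h_r(0)} D_{\tilde h}(u/r, v/r;\overline{\AA_{\alpha, 1}(0)})$ (and the same with $\AA$ replaced by $\CC$), while \eqref{eq:LFPPScaling} with $a = r$ gives $\LFPP(u,v;\overline{\AA_{\alpha r, r}(0)}) = \ratios\, r^{\xi Q} e^{\xi h_r(0)} \LFPP[\epsilon/r][\tilde h](u/r, v/r;\overline{\AA_{\alpha, 1}(0)})$ and its global analogue; thus $\ratios\, r^{\xi Q}$ is exactly the $\LFPP$-scaling factor, which is the source of these ratios throughout. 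Crucially, since $r > \epsilon^{1-\zeta}$ we have $\epsilon/r < \epsilon^{\zeta}$, so the rescaled parameter $\epsilon/r$ is small once $\epsilon$ is small (depending on $\zeta$), and the standard convergence of $\LFPP$ to $D_h$ applies at unit scale to the field $\tilde h$.

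Now take $\alpha_* := \alpha_{**}(s/8, 8S, 1 - (1-p)/3)$, fix $\alpha \in \hrint{\alpha_*}{1}$, and fix a small $\delta' \in (0,1)$ with $(1+\delta')^2 \le 8$. I claim there is $\epsilon_0 = \epsilon_0(\alpha, \zeta, p)$ such that for $\epsilon < \epsilon_0$, each $z$, and each $r \in (\epsilon^{1-\zeta}, 1)$, the following three events each occur with probability at least $1 - (1-p)/3$: (i) the Lemma~2.11 event for $\tilde h$ at scale $1$ with parameters $(s/8, 8S)$, i.e.\ $D_{\tilde h}(u',v') > s/8$ implies $D_{\tilde h}(u',v';\overline{\AA_{\alpha,1}(0)}) \ge 8S$ for all $u' \in \partial B_\alpha(0)$, $v' \in \partial B_1(0)$ (this has no $\epsilon$-dependence, being a statement about the limit metric); (ii) the comparison $(1+\delta')^{-1} \le \locLFPP(\cdot,\cdot;V)/\LFPP(\cdot,\cdot;V) \le 1+\delta'$ for all connected $V \subset B_2(0)$ and all distinct pairs in $V$, from \ref{loc:UniformComparison}; and (iii) the multiplicative closeness (factor $1+\delta'$) of $\LFPP[\epsilon/r][\tilde h]$ to $D_{\tilde h}$, both for point-to-point distances between $\partial B_\alpha(0)$ and $\partial B_1(0)$ and for the internal metrics on $\overline{\AA_{\alpha,1}(0)}$, which follows from the convergence of $\LFPP$ to $D_h$ (including at the level of internal metrics on the closed annulus, cf.\ \cite{WeakLQGMetrics, LocalMetricsOfTheGFF}), using that $D_{\tilde h}(\partial B_\alpha(0), \partial B_1(0))$ is bounded below by a positive deterministic constant with high probability, so that additive errors become multiplicative. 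On the intersection of these events, \eqref{eq:SecondClaim} follows by a chain: if $D_h(u,v) > s r^{\xi Q} e^{\xi h_r(0)}$, then $D_{\tilde h}(u/r, v/r) > s > s/8$, so by (ii), the scaling identities, (iii), and (i),
\begin{align*}
  \locLFPP(u,v;\overline{\AA_{\alpha r, r}(0)})
  &\ge \frac{1}{(1+\delta')^2}\,\ratios\, r^{\xi Q} e^{\xi h_r(0)}\, D_{\tilde h}(u/r, v/r;\overline{\AA_{\alpha, 1}(0)}) \\
  &\ge \frac{8S}{(1+\delta')^2}\,\ratios\, r^{\xi Q} e^{\xi h_r(0)}
  \;\ge\; S\,\ratios\, r^{\xi Q} e^{\xi h_r(0)}.
\end{align*}
Statement \eqref{eq:ThirdClaim} is symmetric: after reducing the global metrics $\locLFPP(u,v)$ and $\LFPP(u,v)$ to internal metrics on a large fixed ball (where they agree with the global metrics with high probability, as in the proof of Lemma~\ref{lemma:Condition1}), running the same chain in the opposite direction shows that $\locLFPP(u,v) > s\,\ratios\, r^{\xi Q} e^{\xi h_r(0)}$ forces $D_{\tilde h}(u/r, v/r) > s/(1+\delta')^2 \ge s/8$, whence (i) and the exact $D_h$-scaling give $D_h(u,v;\overline{\AA_{\alpha r, r}(0)}) \ge 8S\, r^{\xi Q} e^{\xi h_r(0)} \ge S\, r^{\xi Q} e^{\xi h_r(0)}$. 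The factor $8$ absorbed into $\alpha_*$ is exactly what makes room for the $(1+\delta')$-errors from the two comparison steps.

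I expect the principal obstacle to be item (iii): invoking the convergence of $\LFPP$ to $D_h$ at the level of internal metrics on the (closed) thin annulus $\overline{\AA_{\alpha,1}(0)}$, uniformly over the boundary pairs, and in particular in the lower-bound direction $\LFPP[\delta] \gtrsim D_{\tilde h}$ needed for \eqref{eq:SecondClaim}. This is where one must rely on the finer convergence results from the literature (alternatively, one can replace $\overline{\AA_{\alpha,1}(0)}$ by slightly larger open annuli and re-run Lemma~2.11 with a correspondingly adjusted $\alpha_{**}$), together with the routine bookkeeping that turns the probabilistic convergence into a single deterministic threshold $\epsilon_0(\alpha, \zeta, p)$ uniform in $z$ and $r$.
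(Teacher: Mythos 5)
Your proposal is correct and follows essentially the same route as the paper's (much terser) proof: reduce to $z=0$, rescale the annulus to unit size via the exact scaling relation \eqref{eq:LFPPScaling} together with axioms \ref{axiom:WeylScaling} and \ref{axiom:CoordinateChange}, invoke Lemma~2.11 of \cite{ExistenceAndUniqueness} with parameters $(s/8,\,8S,\,1-(1-p)/3)$, and absorb the two multiplicative errors from \ref{loc:UniformComparison} and from the convergence of $\LFPP[\epsilon/r]$ to $D_h$ into the factor $8$, noting $\epsilon/r<\epsilon^{\zeta}$. The internal-metric subtlety you flag in item~(iii) is real but is likewise left implicit in the paper's sketch, and your suggested fix (slightly enlarging the annulus and adjusting the Lemma~2.11 parameters) is a standard and adequate way to handle it.
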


      \begin{proof}[Proof of Lemma \ref{lemma:Condition2}]
        By translation invariance, it will suffice to prove the claim when $z = 0$.
        By continuity and scaling invariance, there exist $0 < s < S$ such that for each $r > 0$, it holds with probability at least $1 - (1 - p)/4$ that
        \begin{align*}
          D_h\left(\partial \AA_{3r/4, r}(0), \partial \AA_{r/2, 2r}(0) \right) \
          &\geq \ s r^{\xi Q} e^{\xi h_r(0)}, \\
          \sup_{u,v \in \overline{\AA_{3r/4, r}(0)}} D_h\left(u,v; \AA_{r/2, 2r}(0) \right) \
          &< \ S r^{\xi Q} e^{\xi h_r(0)}.
        \end{align*}
        Using Lemma \ref{lemma:PropertiesOfLocalizedFieldAndLFPP} and the definitions of $D_h^{\epsilon}$ and $\hat{D}_h^{\epsilon}$, we can find $\epsilon_0 \in (0,1)$ such that for each $\epsilon \in (0, \epsilon_0)$, it holds with probability at least $1 - (1 - p)/4$ that for all $r \in \hlint{0}{1}$ and all $u,v \in \AA_{3r/4, 5r/4}(0)$,
        \begin{align*}
          \frac{1}{2} \LFPP\left(u,v; \AA_{3r/4, 5r/4}(0) \right) \
          &\leq \ \locLFPP\left(u,v; \AA_{3r/4, 5r/4}(0) \right) \
          \leq \ 2 \LFPP\left(u,v; \AA_{3r/4, 5r/4}(0) \right).
        \end{align*}
        Using the convergence in probability of $\LFPP$ to $D_h$, we can decrease $\epsilon_0$ further, decrease $s$, and increase $S$ so that for each $\epsilon \in (0, \epsilon_0^{\zeta})$, it holds with probability at least $1 - (1 - p)/4$ that
        \begin{align*}
          \LFPP\left(\partial \AA_{7/8, 1}(0), \partial \AA_{3/4, 5/4}(0) \right) \
          &\geq \ 2s, \\
          \sup_{u,v \in \overline{\AA_{7/8, 1}(0)}} \LFPP\left(u,v; \AA_{3/4, 5/4}(0) \right) \
          &< \ \frac{S}{2}.
        \end{align*}
        Choose $\alpha_{*} \in (7/8, 1)$ and shrink $\epsilon_0$ according to Lemma \ref{lemma:LowerBoundForDistancesInANarrowAnnulus} with $1 - (1 - p)/4$ in place of $p$.
        Fix $\alpha \in \hrint{\alpha_{*}}{1}$, $r \in (\epsilon^{1 - \zeta},1)$, and $z \in \CC$.
        With probability at least $p$, if $u \in \partial B_{\alpha r}(z)$ and $v \in \partial B_r(z)$ satisfy $D_h(u,v) > D_h(u, \partial \AA_{r/2, 2r}(z))$, then
        \begin{align*}
          D_h\left(u,v \right) \
          &> \ D_h\left(\partial \AA_{3r/4, r}(z), \partial \AA_{r/2, 2r}(z) \right) \
          \geq \ s r^{\xi Q} e^{\xi h_r(z)},
        \end{align*}
        so
        \begin{align*}
          \locLFPP\left(u,v; \overline{\AA_{\alpha r, r}(z)} \right) \
          &\geq \ S \ratios r^{\xi Q} e^{\xi h_r(z)} \
          > \ \locLFPP\left(u,v; \AA_{r/2 + \frepsilon, 2r - \frepsilon}(z) \right).
        \end{align*}
        If $u \in \partial B_{\alpha r}(z)$ and $v \in \partial B_r(z)$ satisfy $\locLFPP(u,v) > \locLFPP(u, \partial \AA_{r/2 + \frepsilon, 2r - \frepsilon}(z))$, then
        \begin{align*}
          \locLFPP\left(u,v \right) \
          &> \ \locLFPP\left(\partial \AA_{7r/8, r}(z), \partial \AA_{3r/4, 5r/4}(z) \right) \
          \geq \ s \ratios r^{\xi Q} e^{\xi h_r(z)},
        \end{align*}
        so 
        \begin{align*}
          D_h\left(u,v; \overline{\AA_{\alpha r, r}(z)} \right) \
          &\geq \ S r^{\xi Q} e^{\xi h_r(z)} \
          > \ D_h\left(u,v; \AA_{r/2, 2r}(z) \right).
        \end{align*}
      \end{proof}

      \begin{lemma}
        \label{lemma:Condition3}
        Fix $p \in (0,1)$ and $\alpha \in (7/8, 1)$.
        There exists $A = A(p, \alpha) > 0$ and $\epsilon_0 = \epsilon_0(p) \in (0,1)$ such that for each $z \in \CC$, each $\epsilon \in (0,\epsilon_0)$, and each $r \in [\epsilon, 1]$, $E_{r,\epsilon}^{(3)}(z)$ occurs with probability at least $p$.
        \begin{proof}
          We'll first prove the claim for $D_h$, in which case axiom \ref{axiom:CoordinateChange} implies it will be enough to choose $A$ so that with probability at least $p$, $D_h(\text{around } \AA_{\alpha, 1}(0)) < A D_h(\partial B_{\alpha}(0), \partial B_1(0))$.
          This is true because $D_h(\partial B_{\alpha}(0), \partial B_1(0)) > 0$ and $D_h(\text{around } \AA_{\alpha, 1}(0)) < \infty$ almost surely.

          To prove the claim for $\locLFPP$, it is enough by \ref{loc:TranslationInvariance} from Lemma \ref{lemma:PropertiesOfLocalizedFieldAndLFPP} to consider the case $z =0 $.
          Convergence in probability of $\LFPP$ to $D_h$ and the fact that $D_h$ induces the Euclidean topology imply $C$ can be chosen large enough that for each $\epsilon \in \hlint{0}{1}$, it holds with probability at least $1 - (1-p)/2$ that
          \begin{align*}
            \LFPP\left(\text{around } \AA_{\alpha, 1}(0) \right) \
            &\leq \ C, \\
            \LFPP\left(\partial B_{\alpha}(0), \partial B_1(0) \right) \
            &\geq \ C^{-1}.
          \end{align*}
          By \eqref{eq:LFPPScaling} and scale invariance of the GFF, for each $\epsilon \in (0, 1)$ and each $r \in [\epsilon, 1]$, it holds with probability at least $1 - (1-p)/2$ that
          \begin{align*}
            \LFPP\left(\text{around } \AA_{\alpha r, r}(0) \right) \
            &\leq \ C \ratios r^{\xi Q} e^{\xi h_r(0)}, \\
            \LFPP\left(\partial B_{\alpha r}(0), \partial B_r(0) \right) \
            &\geq \ C^{-1} \ratios r^{\xi Q} e^{\xi h_r(0)}.
          \end{align*}
          Use \ref{loc:UniformComparison} from Lemma \ref{lemma:PropertiesOfLocalizedFieldAndLFPP} to choose $\epsilon_0 \in (0,1)$ such that for each $\epsilon \in (0, \epsilon_0)$, it holds with probability at least $1 - (1 - p)/2$ that
          \begin{align*}
            \frac{1}{2} D_h^{\epsilon}(u,v; B_1(0)) \
            &\leq \ \hat{D}_h^{\epsilon}(u,v; B_1(0)) \
            \leq \ 2 D_h^{\epsilon}(u,v; B_1(0)) \ \forall u,v \in B_1(0).
          \end{align*}
          The claim follows by taking any $A \geq 4 C^2$.
        \end{proof}
      \end{lemma}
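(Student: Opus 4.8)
The plan is to treat the $D_h$-inequality and the $\locLFPP$-inequality in the definition of $E_{r,\epsilon}^{(3)}(z)$ separately, since they are the same assertion with $D_h$ and $\locLFPP$ interchanged. For the $D_h$ part, the key observation is that the law of the ratio $D_h(\text{around }\AA_{\alpha r,r}(z))\,/\,D_h(\partial B_{\alpha r}(z),\partial B_r(z))$ does not depend on $z$ or $r$: translation invariance of the whole-plane GFF modulo additive constants together with axiom \ref{axiom:CoordinateChange} (applied with $a=r$, $b=z$) identifies $D_h$ on $\AA_{\alpha r,r}(z)$, up to the law-preserving change of variables $h\mapsto h(r\,\cdot+z)-h_r(z)$, with $D_h$ on $\AA_{\alpha,1}(0)$, and the leftover additive shift $Q\log r+h_r(z)$ multiplies numerator and denominator by the same constant by axiom \ref{axiom:WeylScaling}, hence cancels. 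Since $D_h$ is a length metric inducing the Euclidean topology, almost surely $D_h(\text{around }\AA_{\alpha,1}(0))<\infty$ (e.g.\ the circle of radius $(1+\alpha)/2$ has finite $D_h$-length) and $D_h(\partial B_\alpha(0),\partial B_1(0))>0$, so one may fix $A_1=A_1(p,\alpha)$ for which the $D_h$-inequality holds with probability at least $1-(1-p)/2$, for all $z$ and $r$.

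For the $\locLFPP$ part we cannot rescale directly, so I would route through $\LFPP$. First, by tightness of $(\LFPP)_{\epsilon\in(0,1)}$ — together with the fact that $D_h$ induces the Euclidean topology, which prevents the around-distance from blowing up and the across-distance from collapsing to $0$ — there is $C=C(p,\alpha)>0$ such that, for every noise level $\epsilon'\in(0,1)$, with probability at least $1-(1-p)/4$ one has $\LFPP[\epsilon'](\text{around }\AA_{\alpha,1}(0))\le C$ and $\LFPP[\epsilon'](\partial B_\alpha(0),\partial B_1(0))\ge C^{-1}$. Next, the exact scaling relation \eqref{eq:LFPPScaling}, combined with scale and translation invariance of the whole-plane GFF modulo additive constants, transfers this to scale $r$: a loop around $\AA_{\alpha r,r}(0)$ is the image of a loop around $\AA_{\alpha,1}(0)$ under $w\mapsto rw$, and its $D_h^\epsilon$-length rescales by exactly the same prefactor as a point-to-point $D_h^\epsilon$-distance, so $\LFPP(\text{around }\AA_{\alpha r,r}(0))$ and $\LFPP(\partial B_{\alpha r}(0),\partial B_r(0))$ both equal, in law, the common factor $\ratios\, r^{\xi Q}e^{\xi h_r(0)}$ times the scale-$1$, level-$(\epsilon/r)$ quantities; since $\epsilon/r\le 1$ when $r\in[\epsilon,1]$, this factor cancels and the ratio of around to across is at most $C^2$ with probability at least $1-(1-p)/4$. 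Finally, $\AA_{\alpha r,r}(0)$ and the region relevant to the across-distance lie in a fixed bounded open set (say $B_2(0)$) for every $r\le 1$, so item \ref{loc:UniformComparison} of Lemma \ref{lemma:PropertiesOfLocalizedFieldAndLFPP} lets me replace $\LFPP$ by $\locLFPP$ at the cost of a factor $2$ on internal distances once $\epsilon<\epsilon_0=\epsilon_0(p)$, whence $\locLFPP(\text{around }\AA_{\alpha r,r}(0))\,/\,\locLFPP(\partial B_{\alpha r}(0),\partial B_r(0))\le 4C^2$ with probability at least $1-(1-p)/2$.

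Taking $A:=\max\{A_1,4C^2\}$ and $\epsilon_0=\epsilon_0(p)$ as above, a union bound over the finitely many events used gives $P[E_{r,\epsilon}^{(3)}(z)]\ge p$ for all $z\in\CC$, all $\epsilon\in(0,\epsilon_0)$, and all $r\in[\epsilon,1]$. The main obstacle I anticipate is the uniform-in-$\epsilon$ control of the around/across ratio for $\LFPP$ at scale $1$: because rescaling by $r\in[\epsilon,1]$ sends the level $\epsilon$ to $\epsilon/r$, which may be as large as $1$, the convergence $\LFPP\to D_h$ as $\epsilon\to 0$ by itself is not enough and one genuinely needs tightness of the whole family of rescaled LFPP metrics — equivalently, convergence in probability supplemented by joint continuity of $h_\epsilon^*$ in $\epsilon$ on compact level-intervals bounded away from $0$. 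A secondary technical point is to verify that the "distance around an annulus" functional transforms under \eqref{eq:LFPPScaling} with exactly the same prefactor as point-to-point distances, so that this prefactor cancels cleanly in the ratio.
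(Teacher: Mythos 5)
Your proposal is correct and takes essentially the same approach as the paper: reduce the $D_h$ part to the fixed annulus $\AA_{\alpha,1}(0)$ by translation and coordinate-change invariance; route the $\locLFPP$ part through $\LFPP$ via a uniform-in-level bound on the around/across ratio at scale one; transfer to scale $r$ via the exact scaling relation \eqref{eq:LFPPScaling}, using $\epsilon/r\le 1$ and the cancellation of the common prefactor; and finally pass from $\LFPP$ to $\locLFPP$ at the cost of a factor $2$ using item \ref{loc:UniformComparison} of Lemma \ref{lemma:PropertiesOfLocalizedFieldAndLFPP}, arriving at $A\ge 4C^2$. You also correctly flag the one subtlety the paper's phrasing glosses over: the around/across bound at scale one must hold uniformly over all levels $\epsilon'\in(0,1]$, not merely as $\epsilon'\to 0$, since rescaling sends $\epsilon$ to $\epsilon/r$ which can be close to $1$; invoking tightness of the whole family of rescaled LFPP metrics is the right way to justify this.
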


      \begin{lemma}
        \label{lemma:Condition4}
        Fix $\alpha \in (7/8, 1)$, $\delta, p \in (0,1)$, and $0 < \zeta_{-} < \zeta < 1$.
        There exists $\epsilon_0 \in (0,1)$ such that for every $\epsilon \in (0, \epsilon_0)$, every $r \in (\epsilon^{1 - \zeta}, 1)$, and every $z \in \CC$, $E_{r,\epsilon}^{(4)}(z)$ occurs with probability at least $p$.

        \begin{proof}
          By axiom \ref{axiom:CoordinateChange} and \ref{loc:TranslationInvariance} from Lemma \ref{lemma:PropertiesOfLocalizedFieldAndLFPP}, it will suffice to chose $\epsilon_0$ so that the claim holds with $z = 0$.
          Using the convergence in probability of $\LFPP$ and $\locLFPP$ to $D_h$, we can find $R > 0$ and $\epsilon_0 \in (0, 1)$ such that for each $\epsilon \in (0, \epsilon_0)$, it holds with probability at least $1 - (1 - p)/3$ that
          \begin{align}
            D_h^{\epsilon}\left(u,v \right) \
            &= \ D_h^{\epsilon}\left(u,v; B_R(0) \right) \ \forall u,v \in B_1(0), \label{eq:Condition4InternalMetricLFPP} \\
            \hat{D}_h^{\epsilon}\left(u,v \right) \
            &= \ \hat{D}_h^{\epsilon}\left(u,v; B_R(0) \right) \ \forall u,v \in B_1(0). \label{eq:Condition4InternalMetricLocalizedLFPP}
          \end{align}
          By \ref{loc:UniformComparison} from Lemma \ref{lemma:PropertiesOfLocalizedFieldAndLFPP}, we can decrease $\epsilon_0$ so that for each $\epsilon \in (0, \epsilon_0)$, it holds with probability at least $1 - (1 - p)/3$ that
          \begin{align}
            \frac{1}{2} \LFPP(u,v; B_R(0)) \
            &\leq \ \locLFPP(u,v; B_R(0)) \
            \leq \ 2 \LFPP(u,v; B_R(0)) \ \forall u,v \in B_R(0). \label{eq:Condition4CompareLFPPAndLocalizedLFPP}
          \end{align}
          Using the convergence in probability of $\LFPP$ to $D_h$, we can shrink $\epsilon_0$ so that for each $\epsilon \in (0, \epsilon_0^{\zeta})$, it holds with probability at least $1 - (1 - p)/3$ that
          \begin{align}
            \LFPP\left(u,v \right) \
            &\leq \ \frac{\delta}{2} D_h\left(\partial B_{\alpha}(0), \partial B_1(0) \right) \ \forall u, v \in B_2(0), |u - v| \leq 4 \epsilon_0^{\zeta - \zeta_{-}}, \label{eq:Condition4LFPPError} \\
            D_h(u,v) \
            &\leq \ \frac{\delta}{2} \LFPP\left(\partial B_{\alpha}(0), \partial B_1(0) \right) \ \forall u,v \in B_2(0), |u - v| \leq 4 \epsilon_0^{\zeta - \zeta_{-}}. \label{eq:Condition4DhError}
          \end{align}

          Now if $\epsilon \in (0, \epsilon_0)$ and $r \in (\epsilon^{1 - \zeta}, 1)$, then $\epsilon/r \in (0, \epsilon_0^{\zeta})$, so with probability at least $p$, for all $u \in \overline{\AA_{\alpha r, r}(0)}$ and $v \in \overline{B_{4 \epsilon^{1 - \zeta_{-}}}(u)}$,
          \begin{align*}
            \locLFPP(u,v) \
            &\leq \ 2 \LFPP(u,v) & (\text{by \eqref{eq:Condition4InternalMetricLFPP}, \eqref{eq:Condition4InternalMetricLocalizedLFPP}, \eqref{eq:Condition4CompareLFPPAndLocalizedLFPP}}) \\
            &= \ 2 \ratios r^{\xi Q} e^{\xi h_r(0)} \LFPP[\epsilon/r][h(r \cdot) - h_r(0)]\left(u/r, v/r \right) & (\text{by \eqref{eq:LFPPScaling}}) \\
            &\leq \ \delta \ratios r^{\xi Q} e^{\xi h_r(0)} D_{h(r \cdot) - h_r(0)} \left(\partial B_{\alpha}(0), \partial B_1(0) \right) & (\text{by \eqref{eq:Condition4LFPPError}}) \\
            &= \ \delta \ratios D_h\left(\partial B_{\alpha r}(0), \partial B_r(0) \right) & (\text{by axioms \ref{axiom:WeylScaling} and \ref{axiom:CoordinateChange}}).
          \end{align*}
          Likewise,
          \begin{align*}
            D_h\left(u,v \right) \
            &= \ r^{\xi Q} e^{\xi h_r(0)} D_{h(r \cdot) - h_r(0)}\left(u/r, v/r\right) & (\text{by axioms \ref{axiom:WeylScaling} and \ref{axiom:CoordinateChange}}) \\
            &\leq \ \frac{\delta}{2} r^{\xi Q} e^{\xi h_r(0)} \LFPP[\epsilon/r][h(r \cdot) - h_r(0)]\left(\partial B_{\alpha}(0), \partial B_1(0) \right) & (\text{by \eqref{eq:Condition4DhError}}) \\
            &= \ \frac{\delta}{2} \invratios \LFPP\left(\partial B_{\alpha r}(0), \partial B_r(0) \right) & (\text{by \eqref{eq:LFPPScaling}}) \\
            &\leq \ \delta \invratios \locLFPP\left(\partial B_{\alpha r}(0), \partial B_r(0) \right) & (\text{by \eqref{eq:Condition4InternalMetricLFPP}, \eqref{eq:Condition4InternalMetricLocalizedLFPP}, \eqref{eq:Condition4CompareLFPPAndLocalizedLFPP}}). 
          \end{align*}
        \end{proof}
      \end{lemma}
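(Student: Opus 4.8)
The plan is to reduce condition \ref{conditions:Error} (with $z=0$) to a single unit-scale estimate by means of the exact scaling relation \eqref{eq:LFPPScaling}, and then to close that estimate using the convergence in probability of $\LFPP$ and $\locLFPP$ to $D_h$ together with the a.s.\ uniform continuity of $D_h$ on compact sets. First I would use axiom \ref{axiom:CoordinateChange}, property \ref{loc:TranslationInvariance} of Lemma \ref{lemma:PropertiesOfLocalizedFieldAndLFPP}, and translation invariance of the whole-plane GFF modulo additive constants to reduce to $z = 0$; so the goal becomes to show that, with probability at least $p$, both displayed inequalities of condition \ref{conditions:Error} hold simultaneously for every $u \in \overline{\AA_{\alpha r, r}(0)}$ and every $v \in \overline{B_{4 \epsilon^{1 - \zeta_{-}}}(u)}$.

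The key step is rescaling space by $1/r$. By \eqref{eq:LFPPScaling} together with axioms \ref{axiom:WeylScaling} and \ref{axiom:CoordinateChange}, scale-$\epsilon$ LFPP on radii of order $r$ becomes scale-$(\epsilon/r)$ LFPP on radii of order $1$, up to the deterministic factor $\ratios\, r^{\xi Q} e^{\xi h_r(0)}$, while $D_h$ rescales with the factor $r^{\xi Q} e^{\xi h_r(0)}$ alone; consequently the ratio factors $\ratios$ and $\invratios$ and the common factor $r^{\xi Q} e^{\xi h_r(0)}$ all cancel out of the two inequalities of condition \ref{conditions:Error}. The rescaled field $h(r \cdot) - h_r(0)$ is again a whole-plane GFF (normalized to have zero average on $\partial B_1(0)$), so its law does not depend on $r$ and every metric-convergence and continuity statement above applies to it. Since $r > \epsilon^{1-\zeta}$, the rescaled scale satisfies $\epsilon/r < \epsilon^{\zeta}$ and the rescaled separation satisfies $|u/r - v/r| \le 4\epsilon^{1-\zeta_-}/r < 4\epsilon^{\zeta - \zeta_-}$. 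Thus it suffices to prove one unit-scale statement: for $\epsilon'$ below a deterministic threshold, with probability close to $1$, the $\LFPP[\epsilon']$-distance (resp.\ the $D_h$-distance) between any two points of $\overline{B_2(0)}$ lying within $4\epsilon^{\zeta - \zeta_-}$ of each other is at most $\tfrac{\delta}{2}$ times the $D_h$-distance (resp.\ the $\LFPP[\epsilon']$-distance) between $\partial B_\alpha(0)$ and $\partial B_1(0)$, where $\epsilon'$ plays the role of $\epsilon/r$ and the factor-$2$ comparison of $\locLFPP$ with $\LFPP$ from property \ref{loc:UniformComparison} accounts for replacing $\locLFPP$ by $\LFPP$.

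To prove this unit-scale statement I would combine three ingredients, spending roughly a third of the probability budget $1-p$ on each. First, $D_h$ is a.s.\ a continuous metric inducing the Euclidean topology, so $D_h(\partial B_\alpha(0), \partial B_1(0))$ is a.s.\ strictly positive and $D_h$ is a.s.\ uniformly continuous on $\overline{B_2(0)}$; hence there are deterministic $c_0, \rho_0 > 0$ such that, with probability at least $1 - (1-p)/3$, one has $D_h(\partial B_\alpha(0), \partial B_1(0)) \ge c_0$ and $D_h(u', v') \le \tfrac{\delta}{4} c_0$ whenever $u', v' \in \overline{B_2(0)}$ with $|u'-v'| \le \rho_0$. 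Second, convergence in probability of $\LFPP[\epsilon']$ and $\locLFPP[\epsilon']$ to $D_h$, locally uniformly, supplies a threshold below which $\LFPP[\epsilon']$ is within $\tfrac{\delta}{4} c_0$ of $D_h$ on $\overline{B_2(0)}$ with probability at least $1 - (1-p)/3$. Third, property \ref{loc:UniformComparison} gives $\locLFPP[\epsilon'] \le 2\LFPP[\epsilon']$ on a fixed ball with probability at least $1 - (1-p)/3$. I then pick $\epsilon_0$ so small that $4\epsilon_0^{\zeta - \zeta_-} \le \rho_0$ and $\epsilon_0^{\zeta}$ lies below the threshold from the second ingredient; since $\epsilon < \epsilon_0$ forces $\epsilon/r < \epsilon^{\zeta} < \epsilon_0^{\zeta}$ and $4\epsilon^{1-\zeta_-}/r < 4\epsilon^{\zeta - \zeta_-} < 4\epsilon_0^{\zeta - \zeta_-} \le \rho_0$, the three ingredients together yield the unit-scale statement, hence condition \ref{conditions:Error}, with total probability loss at most $1-p$. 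One minor point: condition \ref{conditions:Error} involves the internal metrics on the thick annuli $\AA_{r/2 + \frepsilon, 2r - \frepsilon}(0)$ and $\AA_{r/2, 2r}(0)$ rather than the ambient ones, but these annuli have thickness of order $r \gg \epsilon^{1-\zeta_-}$, so for $\epsilon_0$ small the relevant short geodesics are confined to them; equivalently one first arranges, with probability close to $1$, that the ambient $D_h^{\epsilon}$- and $\hat{D}_h^{\epsilon}$-distances between points of $B_1(0)$ agree with the corresponding internal metrics on a large fixed ball, at the cost of one further small probability loss.

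I expect the main obstacle to be uniformity: a single $\epsilon_0$ must work simultaneously for all $\epsilon < \epsilon_0$, all $r \in (\epsilon^{1-\zeta}, 1)$, all $z \in \CC$, and all pairs $(u,v)$ with $u \in \overline{\AA_{\alpha r, r}(z)}$ and $|u - v| \le 4\epsilon^{1-\zeta_-}$. The three structural facts used above are exactly what make this possible: the law of the rescaled field $h(r \cdot) - h_r(0)$ is independent of $r$, so the unit-scale estimate is $r$-free; $\epsilon/r \to 0$ uniformly over $r \in (\epsilon^{1-\zeta},1)$ as $\epsilon \to 0$, so the convergence-in-probability estimates apply at the rescaled scale; and $D_h$ is a.s.\ uniformly continuous on a fixed compact set, so ``$u$ and $v$ nearby'' becomes ``$D_h(u,v)$ small'' uniformly in the two points. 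The remaining work is the bookkeeping of matching the shrinking, $\epsilon$-dependent radius $4\epsilon^{1-\zeta_-}$ against the fixed modulus-of-continuity threshold $\rho_0$ and checking that the internal-metric restriction never becomes binding.
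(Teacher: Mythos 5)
Your proposal is correct and follows essentially the same route as the paper's proof: reduce to $z=0$ by translation invariance, use the exact scaling relation \eqref{eq:LFPPScaling} together with axioms \ref{axiom:WeylScaling} and \ref{axiom:CoordinateChange} so that the ratio and $r^{\xi Q}e^{\xi h_r(0)}$ factors cancel, reduce to a unit-scale estimate for the $r$-independent law of $h(r\cdot)-h_r(0)$ at scale $\epsilon/r<\epsilon^{\zeta}$ with separation $4\epsilon^{1-\zeta_-}/r<4\epsilon_0^{\zeta-\zeta_-}$, and close that estimate via convergence in probability plus the factor-$2$ comparison of $\locLFPP$ with $\LFPP$ and the ambient-versus-internal agreement on a large fixed ball. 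The only point to tidy is that, since $\locLFPP$ does not satisfy exact scaling, the factor-$2$ comparison must be applied to the original field at scale $\epsilon$ (as in \eqref{eq:Condition4CompareLFPPAndLocalizedLFPP}) rather than at the rescaled scale $\epsilon'$ as your third ingredient is phrased, a relabeling that does not change the argument.
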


      \begin{proof}[Proof of Lemma \ref{lemma:ChoosingParameters}]
        Choose $p$ and $c$ according to Lemma \ref{lemma:IndependenceAcrossConcentricAnnuli} using the parameters $a = 24 \log 8$, $s_1 = 1/8$, and $s_2 = 1/2$.
        Choose $A$, $\alpha$, and $\epsilon_0$ according to Lemmas \ref{lemma:Condition1}, \ref{lemma:Condition2}, \ref{lemma:Condition3}, and \ref{lemma:Condition4}, with $1 - (1 - p)/4$ in place of $p$, so $P[E_{r,\epsilon}(z)] \geq p$ for every $r \in (\epsilon^{1 - \zeta}, \epsilon^{1 - \zeta_{+}}) \cap \{8^j\}_{j \in \ZZ}$, every $\epsilon \in (0, \epsilon_0)$, and every $z \in \CC$.
        In particular, $A$ depends on $p$ and $\alpha$, and $\alpha$ depends only on $p$, so $A$ depends only on $p$.
        For each $\epsilon \in (0,1)$, fix $\cR_{\epsilon} \subset (\epsilon^{1 - \zeta}, \epsilon^{1 - \zeta_{+}}) \cap \{8^j\}_{j \in \ZZ}$ with $\# \cR_{\epsilon} > \#((\epsilon^{1 - \zeta}, \epsilon^{1 - \zeta_{+}}) \cap \{8^j\}_{j \in \ZZ})$.

        Fix $\epsilon \in (0, \epsilon_0)$ and $z \in \CC$.
        In the notation of Lemma \ref{lemma:IndependenceAcrossConcentricAnnuli}, take the sequence $(r_k/4)_{k=1}^{\infty}$ to be the elements of $\cR_{\epsilon} \cup ((0, \epsilon^{1 - \zeta}) \cap \{8^j\}_{j \in \ZZ})$, and the events $E_{r_k}$ to be $E_{r_k/4, \epsilon}(z)$ when $r_k/4 \in \cR_{\epsilon}$ and $E_{r_k}$ the trivial event otherwise, so
        \begin{align*}
          P\left\{E_{r,\epsilon}(z) \text{ occurs for at least one } r \in \cR_{\epsilon} \right\} \
          &\geq \ 1 - c e^{-24 \log(8) \# \cR_{\epsilon}}.
        \end{align*}
        The condition $1 - \zeta_{+} < \g (1 - \zeta)$ implies
        \begin{align*}
          \# \cR_{\epsilon} \
          &> \ \frac{1}{3} \# \left(\left(\epsilon^{1 - \zeta}, \epsilon^{1 - \zeta_{+}} \right) \cap \{8^j\}_{j \in \ZZ} \right) \
          \geq \ \frac{1 - \zeta}{6 \log 8} \log \epsilon^{-1} - \frac{1}{3},
        \end{align*}
        so
        \begin{align*}
          P\left\{E_{r,\epsilon}(z) \text{ occurs for at least one } r \in \cR_{\epsilon} \right\} \
          &\geq \ 1 - c \epsilon^{4(1 - \zeta)},
        \end{align*}
        where we have absorbed the $e^{8 \log(8)}$ into the constant $c$.
      \end{proof}
      
    \subsection{Proof of Proposition \ref{prop:MainIterationArgument}}
      Before turning to the proof of Proposition \ref{prop:MainIterationArgument}, we remind the reader that $\hyperlink{V}{V(R)}$ and $\hyperlink{Vhat}{\hat{V}_{\epsilon}(R)}$ denote the sets of pairs $(z,w) \in \CC^2$ such that there is a $D_h$- (resp. $\hat{D}_h^{\epsilon}$-) geodesic from $z$ to $w$ contained within $B_R(0)$.

      \newcommand{\lattice}{\frac{\epsilon^{1 - \zeta}}{4} \ZZ^2} % Lattice.

      Choose $A$, $\alpha$, and $\epsilon_0$ according to Lemma \ref{lemma:ChoosingParameters}.
      Let $H_{\epsilon}$ denote the event that for every $w \in B_{R+1}(0) \cap \lattice$, there exists $r \in \cR_{\epsilon}$ such that $E_{r,\epsilon}(w)$ occurs.
      Then by a union bound, $P[H_{\epsilon}] = 1 - O_{\epsilon}(\epsilon^{2(1 - \zeta)})$, where the big-$O$ constant depends only on $R$ and $\zeta$.
      It follows that if $F_{\epsilon}$ is the event that \eqref{eq:InitialLipschitzConstant} holds, then $P[F_{\epsilon} \cap H_{\epsilon}] = 1 - O_{\epsilon}(\epsilon^{\beta \w 2(1 - \zeta)})$.
      We will now show that on $F_{\epsilon} \cap H_{\epsilon}$, \eqref{eq:NewLipschitzConstant} holds (\eqref{eq:NewLipschitzConstantReversed} follows by swapping the roles of $D_h$ and $\locLFPP$).
      We may assume $C_0(\epsilon) \geq \sup_{r \in \cR_{\epsilon}} \ratios$.

      Fix $(z,w) \in \hyperlink{V}{V(R)}$ and let $P \colon [0, D_h(z,w)] \to B_R(0)$ be a $D_h$-geodesic from $z$ to $w$ parameterized by $D_h$-length and contained in $B_R(0)$.
      Using the fact that $\{B_{\epsilon^{1 - \zeta}/2}(x) : x \in L_{\epsilon}(\zeta) \cap B_{R+1}(0)\}$ covers $B_R(0)$, we can break $P$ into segments as follows.
      Let $t_0 \coloneqq 0$ and choose some $x_0 \in B_{R+1}(0) \cap \lattice$ and $r_0 \in \cR_{\epsilon}$ such that $z \in B_{r_0/2}(x_0)$ and $E_{r_0, \epsilon}(x_0)$ occurs.
      Then inductively define $t_j$ to be the first time after $t_{j-1}$ that $P$ leaves $B_{r_{j-1}}(x_{j-1})$, and choose some $x_j \in B_{R+1}(0) \cap \lattice$ and some $r_j \in \cR_{\epsilon}$ such that $P(t_j) \in B_{r_j/2}(x_j)$ and $E_{r_j, \epsilon}(x_j)$ occurs.
      If no such time exists, instead let $t_j \coloneqq D_h(z,w)$ and leave $x_j$ undefined.
      If such a time $t_j$ does exist, let $s_j$ be the last time before $t_j$ that $P$ exits $B_{\alpha r_{j-1}}(x_{j-1})$.
      Define
      \begin{align*}
        \underline{J} \
        &\coloneqq \ \max\left\{j \geq 1 : |z - P(t_{j-1})| \leq \frac{3 \epsilon^{1 - \zeta_{+}}}{2} \right\}, \\
        \overline{J} \
        &\coloneqq \ \min\left\{j \geq 0: |w - P(t_{j+1})| \leq \frac{3 \epsilon^{1 - \zeta_{+}}}{2} \right\}.
      \end{align*}
      Note that if $t_0 < t_j \leq D_h(z,w)$, then $P(t_j) \in \overline{B_{r_{j-1}}(x_{j-1})}$ and $P(t_{j-1}) \in B_{r_{j-1}/2}(x_{j-1})$, so
      \begin{align*}
        \left|P(t_j) - P(t_{j-1}) \right| \
        &\leq \ \frac{3 \epsilon^{1 - \zeta_{+}}}{2}.
      \end{align*}
      It follows that
      \begin{align}
        \left|P(t_{\underline{J}}) - z \right| \
        &\leq \ \left|P(t_{\underline{J}}) - P(t_{\underline{J} - 1}) \right| + \left|P(t_{\underline{J} - 1}) - z \right| \
        \leq \ 3 \epsilon^{1 - \zeta_{+}}, \label{eq:StartOfGeodesicBound} \\
        \left|P(t_{\overline{J}}) - w \right| \
        &\leq \ \left|P(t_{\overline{J}}) - P(t_{\overline{J} + 1}) \right| + \left|P(t_{\overline{J} + 1}) - w \right| \
        \leq \ 3 \epsilon^{1 - \zeta_{+}}, \label{eq:EndOfGeodesicBound} 
      \end{align}

      \begin{figure}[h]
        \centering
        \includegraphics[scale=0.7]{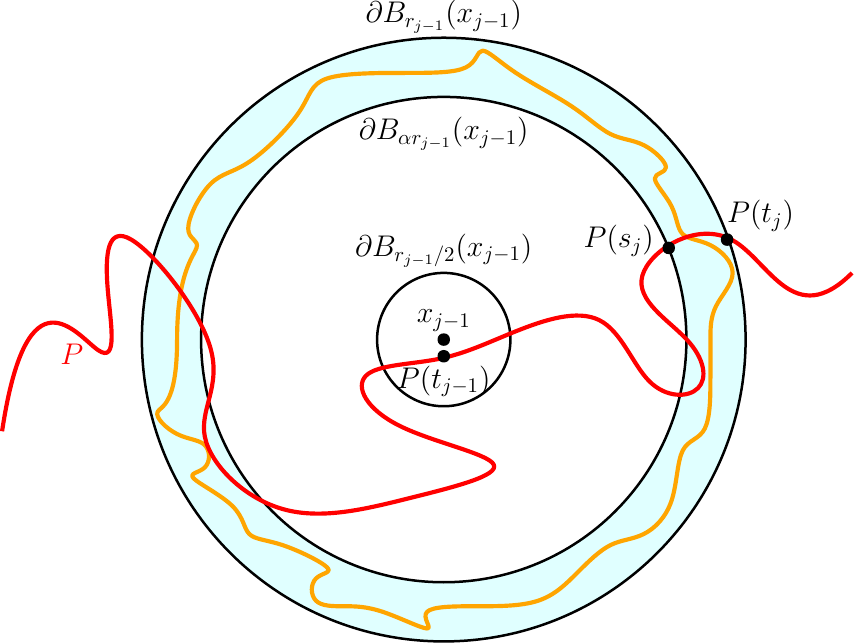}
        \caption{Illustration of the proof of Proposition \ref{prop:MainIterationArgument}.
        Pictured is one of the annuli $\AA_{\alpha r_{j-1}, r_{j-1}}(x_{j-1})$ such that $E_{r_{j-1}, \epsilon}(x_{j-1})$ occurs.
        The path in red is the $D_h$-geodesic $P$.
        Time $t_{j}$ is the first time after time $t_{j-1}$ that $P$ exits $B_{r_{j-1}}(x_{j-1})$, and $s_j$ is the last time before $t_j$ that $P$ leaves $B_{\alpha r_{j-1}}(x_{j-1})$.
        The segment $P|_{[s_j, t_j]}$ is a $D_h$-geodesic in $\AA_{\alpha r_{j-1}, r_{j-1}}(x_{j-1})$, so condition 1 in Definition \ref{defn:AnnulusEvents} says $\locLFPP(P(s_j), P(t_j)) \leq \frac{r_{j-1} \fa_{\epsilon}^{-1}}{r_{j-1}^{\xi Q} \fa_{\epsilon/r_{j-1}}^{-1}} (1 + \delta) (t_j - s_j)$.
        The orange path is from condition 3 in Definition \ref{defn:AnnulusEvents}, and has $D_h$-length at most $A D_h(\partial B_{\alpha r_{j-1}}(x_{j-1}), \partial B_{r_{j-1}}(x_{j-1})) \leq A (t_j - s_j)$.
        Since $P$ crosses the orange path before time $t_{j-1}$ and after time $s_j$, it follows that $s_j - t_{j-1} \leq A(t_j - s_j)$.
        We use the latter to show that a positive proportion of $P$ is comprised of the segments $P|_{[s_j, t_j]}$.
        }
      \end{figure}

      If $j \in [\underline{J} + 1, \overline{J}] \cap \ZZ$, then $P|_{[s_j, t_j]}$ is a $D_h$-geodesic contained in $\overline{\AA_{\alpha r_{j-1}, r_{j-1}}(x_{j-1})}$, so using condition \ref{conditions:GoodLipschitzConstant} in Definition \ref{defn:AnnulusEvents},
      \begin{align}
        \locLFPP(P(s_j), P(t_j)) \
        &\leq \ \ratios (1 + \delta) \left(t_j - s_j \right).
        \label{eq:MainIterationGoodSegments}
      \end{align}
      If $j \in [\underline{J}, \overline{J} - 1] \cap \ZZ$, then we can bound $\locLFPP(P(t_j), P(s_{j+1}))$ using \eqref{eq:InitialLipschitzConstant} and condition \ref{conditions:Error} in Definition \ref{defn:AnnulusEvents}:
      \begin{align*}
        \locLFPP(P(t_j), P(s_{j+1})) \
        &\leq \ \locLFPP\left(B_{4 \epsilon^{1 - \zeta_{-}}}(P(t_j)), B_{4 \epsilon^{1 - \zeta_{-}}}(P(s_{j+1})) \right) \\
        &\qquad \qquad + \sup_{u \in \overline{\AA_{\alpha r_{j-1}, r_{j-1}}(x_{j-1})}} \sup_{v \in \overline{B_{4 \epsilon^{1 - \zeta_{-}}}(u)}} \locLFPP(u,v) \\
        &\qquad \qquad + \sup_{u \in \overline{\AA_{\alpha r_{j}, r_{j}}(x_{j})}} \sup_{v \in \overline{B_{4 \epsilon^{1 - \zeta_{-}}}(u)}} \locLFPP(u,v) \\
        &\leq \ C_0(\epsilon) (s_{j+1} - t_j) \\
        &\qquad \qquad + \delta \ratios D_h\left(\partial B_{\alpha r_{j-1}}(x_{j-1}), \partial B_{r_{j-1}}(x_{j-1}) \right) \\
        &\qquad \qquad + \delta \ratios D_h\left(\partial B_{\alpha r_{j}}(x_{j}), \partial B_{r_{j}}(x_{j}) \right).
      \end{align*}
      For each $j \in [\underline{J} - 1, \overline{J} -1] \cap \ZZ$, $P$ crosses $\AA_{\alpha r_j, r_j}(x_j)$ at least once between time $t_{j}$ and $t_{j+1}$, so
      \begin{align*}
        D_h(\partial B_{\alpha r_j}(x_j), \partial B_{r_j}(x_j)) \
        &\leq \ t_{j+1} - t_j.
      \end{align*}
      Thus, for all $j \in [\underline{J}, \overline{J} - 1] \cap \ZZ$,
      \begin{align}
        \locLFPP\left(P(t_j), P(s_{j+1}) \right) \
        &\leq \ C_0(\epsilon)\left(s_{j+1} - t_j \right) + \delta \ratios (t_{j} - t_{j-1}) + \delta \ratios (t_{j+1} - t_j) 
        \label{eq:MainIterationBadSegments}
      \end{align}
      Summing over $j \in [\underline{J}, \overline{J} - 1] \cap \ZZ$, we obtain
      \begin{align}
        \locLFPP(P(t_{\underline{J}}), P(t_{\overline{J}})) \
        &\leq \ \sum_{j=\underline{J}}^{\overline{J} - 1} \locLFPP\left(P(t_j), P(s_{j+1}) \right) + \locLFPP\left(P(s_{j+1}), P(t_{j+1}) \right) \nonumber \\
        &\leq \ \sup_{r \in \cR_{\epsilon}} \ratios (1 + \delta) \sum_{j=\underline{J} + 1}^{\overline{J}} t_j - s_j & (\text{by \eqref{eq:MainIterationGoodSegments}}) \nonumber \\
        &\qquad + C_0(\epsilon) \sum_{j=\underline{J}+1}^{\overline{J}} s_{j} - t_{j-1} + 2 \delta \sup_{r \in \cR_{\epsilon}} \ratios \sum_{j=\underline{J}}^{\overline{J}} t_{j+1} - t_j & (\text{by \eqref{eq:MainIterationBadSegments}}) \nonumber \\
        &= \ \sup_{r \in \cR_{\epsilon}} \ratios (1 + \delta) \sum_{j=\underline{J}+1}^{\overline{J}} t_j - t_{j-1} \nonumber \\
        &\qquad + \left[C_0(\epsilon) - \sup_{r \in \cR_{\epsilon}} \ratios (1 + \delta) \right] \sum_{j=\underline{J}+1}^{\overline{J}} s_j - t_{j-1} \nonumber \\
        &\qquad + 2 \delta \sup_{r \in \cR_{\epsilon}} \ratios \sum_{j=\underline{J}}^{\overline{J}} t_{j+1} - t_j. \label{eq:LocalizedLFPPTriangleInequality}
      \end{align}

      If $j \in [\underline{J} + 1, \overline{J}] \cap \ZZ$, then $|P(t_{j-1}) - z| > 3 r_{j-1}/2$ and $|P(t_{j-1}) - x_{j-1}| < r_{j-1}/2$, so $|z - x_{j-1}| > r_{j-1}$.
      Likewise, $|w - x_{j-1}| > r_{j-1}$.
      So $P$ must cross $\AA_{\alpha r_{j-1}, r_{j-1}}(x_{j-1})$ at least once before time $t_{j-1}$ and at least once after time $s_j$.
      Condition \ref{conditions:Proportions} in Definition \ref{defn:AnnulusEvents} then implies 
      \begin{align*}
        s_j - t_{j-1} \
        &\leq \ A D_h\left(\partial B_{\alpha r_{j-1}}(x_{j-1}), \partial B_{r_{j-1}}(x_{j-1}) \right) \
        \leq \ A \left(t_j - s_j \right).
      \end{align*}
      Adding $A(s_j - t_{j-1})$ to both sides, then dividing through by $A+1$ yields
      \begin{align*}
        s_j - t_{j-1} \
        &\leq \ \frac{A}{A+1} \left(t_j - t_{j-1} \right).
      \end{align*}
      Combining this estimate with \eqref{eq:LocalizedLFPPTriangleInequality}, \eqref{eq:StartOfGeodesicBound}, and \eqref{eq:EndOfGeodesicBound} yields
      \begin{align*}
        \locLFPP\left(B_{4 \epsilon^{1 - \zeta_{+}}}(z), B_{4 \epsilon^{1 - \zeta_{+}}}(w) \right) \
        &\leq \ \sup_{r \in \cR_{\epsilon}} \ratios (1 + \delta) \left(t_{\overline{J}} - t_{\underline{J}} \right) \\
        &\qquad \qquad + \frac{A}{A+1} \left[C_0(\epsilon) - \sup_{r \in \cR_{\epsilon}} \ratios (1 + \delta) \right] \left(t_{\overline{J}} - t_{\underline{J}} \right) \\
        &\qquad \qquad + 2 \delta \sup_{r \in \cR_{\epsilon}} \ratios \left(t_{\overline{J} + 1} - t_{\underline{J}} \right) \\
        &\leq \ \left[\frac{A}{A+1} C_0(\epsilon) + \left(\frac{1 + \delta}{A+1} + 2 \delta \right) \sup_{r \in \cR_{\epsilon}} \ratios \right] D_h(z,w).
      \end{align*}

    \subsection{Choosing Good Scaling Ratios}
      Our next goal is to iterate Proposition \ref{prop:MainIterationArgument} to prove the following.

      \begin{prop}
        \label{prop:LipschitzConstant1}
        Fix $\delta \in (0,1)$.
        There exists $\beta = \beta(\delta) > 0$ and $\zeta = \zeta(\delta) \in (0,1)$ such that with probability $1 - O_{\epsilon}(\epsilon^{\beta})$ as $\epsilon \to 0$,
        \begin{align}
          \locLFPP\left(B_{4 \epsilon^{1 - \zeta}}(u), B_{4 \epsilon^{1 - \zeta}}(v) \right) \
          &\leq \ \left(1 + \delta \right) D_h\left(u,v \right) \ \forall (u,v) \in \hyperlink{V}{V(R)},
          \label{eq:LipschitzConstant1LocalDh} \\
          D_h\left(B_{4 \epsilon^{1 - \zeta}}(u), B_{4 \epsilon^{1 - \zeta}}(v) \right) \
          &\leq \ \left(1 + \delta \right) \locLFPP\left(u,v \right) \ \forall (u,v) \in \hyperlink{Vhat}{\hat{V}_{\epsilon}(R)}.
          \label{eq:LipschitzConstant1DhLocal}
        \end{align}
      \end{prop}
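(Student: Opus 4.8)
The plan is to iterate Proposition~\ref{prop:MainIterationArgument} a finite number of times, seeding it with the base case furnished by Proposition~\ref{prop:UpToConstants} and invoking Lemma~\ref{lemma:GoodRatios} at each step to keep the scaling ratios $\ratios$ and $\invratios$ below $1+\delta'$ for a small auxiliary parameter $\delta' = \delta'(\delta)$. The crucial point is that the constant $A$ produced by Proposition~\ref{prop:MainIterationArgument} is \emph{universal}, so the Lipschitz factors produced by the iteration obey an affine recursion with a fixed contraction ratio $\tfrac{A}{A+1}$; hence a number of iterations $N$ not depending on $\epsilon$ suffices to push them below $1+\delta$.

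Concretely, I first fix the target $\delta$, let $A$ and $\alpha$ be the universal constants of Lemma~\ref{lemma:ChoosingParameters}, and choose $\delta'\in(0,\delta)$, depending only on $\delta$ and $A$, small enough that $(1+\delta')\bigl[(1+\delta')+2\delta'(A+1)\bigr]\le 1+\delta/2$. Fixing some $\zeta_-^{(0)}\in(0,1)$ for which Proposition~\ref{prop:UpToConstants} applies, I then pick a finite increasing chain of exponents $\zeta_-^{(n)}<\zeta^{(n)}<\zeta_+^{(n)}=\zeta_-^{(n+1)}$ for $n=0,\dots,N-1$ satisfying the constraint $1-\zeta_+^{(n)}<\tfrac12(1-\zeta^{(n)})$ required by Proposition~\ref{prop:MainIterationArgument}. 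This is possible over finitely many steps because the constraint merely forces $1-\zeta_-^{(n)}$ to decrease at least geometrically, so all the exponents lie in $[\zeta_-^{(0)},1)$ with $1-\zeta^{(n)}\ge 2^{-N}(1-\zeta_-^{(0)})>0$. Finally I let $N=N(\delta,A,C_0)$ be large enough that $\bigl(\tfrac{A}{A+1}\bigr)^{N}C_0\le\delta/4$, where $C_0$ is the constant from Proposition~\ref{prop:UpToConstants}, enlarged if necessary so that $C_0\ge 1+\delta'$.

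For the base case, equations~\eqref{eq:UpToConstantsLocDh} and \eqref{eq:UpToConstantsDhLoc}, applied with $U=B_{R+1}(0)$, give the hypotheses~\eqref{eq:InitialLipschitzConstant} and \eqref{eq:InitialLipschitzConstantReversed} with $\zeta_-=\zeta_-^{(0)}$ and constant Lipschitz factors $C_0(\epsilon)=C_0'(\epsilon)=C_0$, using only monotonicity of ball-to-ball distances in the ball radii (to pass from radius $\epsilon^{1-\zeta_-^{(0)}}$ to $4\epsilon^{1-\zeta_-^{(0)}}$) and the fact that for $(u,v)\in V(R)$ (resp.\ $\hat{V}_\epsilon(R)$) the relevant geodesic lies in $B_R(0)\subset U$, so the internal metrics on $B_{\epsilon^{1-\zeta}}(U)$ agree there with the full metrics on $\CC$. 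I then apply Proposition~\ref{prop:MainIterationArgument} successively for $n=0,\dots,N-1$, each time with parameters $(\zeta_-^{(n)},\zeta^{(n)},\zeta_+^{(n)})$, with $\delta'$ in the role of $\delta$, and with $\cR_\epsilon=\cR_\epsilon^{(n)}$ chosen, via Lemma~\ref{lemma:GoodRatios}, to be a subset of $(\epsilon^{1-\zeta^{(n)}},\epsilon^{1-\zeta_+^{(n)}})\cap\{8^j\}_{j\in\ZZ}$ of density exceeding $\tfrac13$ along which both $\ratios\le 1+\delta'$ and $\invratios\le 1+\delta'$ for all $\epsilon$ below a threshold $\epsilon_n$. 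Writing $C_n(\epsilon)$ for the Lipschitz factor after $n$ steps, formula~\eqref{eq:UpdatedLipschitzConstant} together with $\sup_{r\in\cR_\epsilon^{(n)}}\ratios\le 1+\delta'\le\bar C_n$ gives $C_{n+1}(\epsilon)\le\tfrac{A}{A+1}\bar C_n+(1+\delta')\bigl[\tfrac{1+\delta'}{A+1}+2\delta'\bigr]$ whenever $C_n(\epsilon)\le\bar C_n$ with $\bar C_n\ge 1+\delta'$; starting from $\bar C_0=C_0$ this produces a deterministic dominating sequence $\bar C_n$ obeying the affine recursion $\bar C_{n+1}=\tfrac{A}{A+1}\bar C_n+(1+\delta')\bigl[\tfrac{1+\delta'}{A+1}+2\delta'\bigr]$, whose iterate satisfies $\bar C_N\le\bigl(\tfrac{A}{A+1}\bigr)^{N}C_0+(1+\delta')\bigl[(1+\delta')+2\delta'(A+1)\bigr]\le\delta/4+1+\delta/2<1+\delta$. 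The reversed Lipschitz factors $C_n'(\epsilon)$ obey the identical recursion with $\invratios$ in place of $\ratios$, hence are dominated by the same $\bar C_n$. On the probability side, each application of Proposition~\ref{prop:MainIterationArgument} preserves the event on which its hypothesis holds and intersects it with a further event of probability $1-O_\epsilon(\epsilon^{2(1-\zeta^{(n)})})$, so after $N$ steps the desired bound holds on an event of probability $1-O_\epsilon(\epsilon^{\beta})$ with $\beta:=\beta_0\wedge\min_{0\le n\le N-1}2(1-\zeta^{(n)})>0$, $\beta_0$ being the exponent from Proposition~\ref{prop:UpToConstants}. Taking $\zeta:=\zeta_+^{(N-1)}$ then yields \eqref{eq:LipschitzConstant1LocalDh} and \eqref{eq:LipschitzConstant1DhLocal}.

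The step I expect to be the main obstacle --- and the reason the scheme closes --- is the requirement that $N$ be chosen independently of $\epsilon$: this is exactly what the universality of $A$ in Proposition~\ref{prop:MainIterationArgument} buys, since if $A$ were allowed to blow up as the exponents approach $1$ then the contraction ratio $\tfrac{A}{A+1}\to 1$ and the Lipschitz factors would fail to converge to $1+\delta$. The other point needing care is the simultaneous, uniform-in-$\epsilon$ control of the scaling ratios along a positive-density subset of $(\epsilon^{1-\zeta^{(n)}},\epsilon^{1-\zeta_+^{(n)}})\cap\{8^j\}_{j\in\ZZ}$, which is precisely the content of Lemma~\ref{lemma:GoodRatios} and is the source of the constraint $1-\zeta_+<\tfrac12(1-\zeta)$.
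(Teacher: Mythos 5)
Your overall strategy—iterate Proposition~\ref{prop:MainIterationArgument} a fixed number $N$ of times, using the universality of $A$ to keep the affine recursion contracting at a rate independent of $\epsilon$—is the same as the paper's, and the recursion bookkeeping (the choice of $\delta'$, the fixed point $(1+\delta')\left[(1+\delta')+2\delta'(A+1)\right]$, the need to enlarge $C_0$) is essentially right. But the way you invoke Lemma~\ref{lemma:GoodRatios} has a genuine gap that invalidates the argument as written.

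You fix a chain of $N$ exponent triples and claim that, for each fixed $n$, Lemma~\ref{lemma:GoodRatios} furnishes a density-$>\tfrac13$ subset $\cR_\epsilon^{(n)}$ of $(\epsilon^{1-\zeta^{(n)}},\epsilon^{1-\zeta_+^{(n)}})\cap\{8^j\}$ with $\ratios, \invratios \le 1+\delta'$ for \emph{all} $\epsilon$ below a threshold. The lemma does not say this. It fixes $3N$ (not $N$) nested exponent windows and proves only that for every $\epsilon < \epsilon_N$ \emph{at least} $2N$ of the $3N$ indices $n$ have $X_\epsilon^{(n)}>1-\delta$, and similarly for $Y_\epsilon^{(n)}$—so at least $N$ indices are simultaneously good—but \emph{which} indices those are can change with $\epsilon$. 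Nothing rules out that for your chosen $n$ there is a sequence $\epsilon_k\to 0$ with $X_{\epsilon_k}^{(n)}\le 1-\delta$, in which case no admissible $\cR_{\epsilon_k}^{(n)}$ exists and your $n$-th iteration step breaks down along that sequence. To repair this you would need the paper's covering step: set up a chain of $3N$ triples, and for each subset $S\subset\{1,\dots,3N\}$ with $\#S\ge N$ let $\cE_S$ be the set of $\epsilon$ for which $S$ is contained in the good index set $U_\epsilon\cap L_\epsilon$; the $\cE_S$ cover $(0,\epsilon_N)$, and for each $S$ with $\inf\cE_S=0$ you run the $N$-step iteration along the chain indexed by the first $N$ elements of $S$. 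Since there are only finitely many such $S$, you can then take $\beta$ to be the minimum of the resulting exponents and $\zeta$ to be the maximum of the resulting $\zeta_+(j_N)$'s (the latter works because enlarging the ball radius from $4\epsilon^{1-\zeta_+(j_N(S))}$ to $4\epsilon^{1-\zeta}$ only decreases ball-to-ball distances on the left of \eqref{eq:LipschitzConstant1LocalDh}–\eqref{eq:LipschitzConstant1DhLocal}). Without this case analysis over $\epsilon$-dependent good index sets, the proof does not close.

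A secondary, smaller point: in the paper's recursion the first term is $\frac{A}{A+1}\bigl[\sup_r \ratios \vee C_k(\epsilon)\bigr]$, and you handle the maximum by saying ``enlarge $C_0$ so $C_0\ge 1+\delta'$.'' What is actually needed is that the dominating sequence stays $\ge$ the fixed point $(1+\delta')\left[(1+\delta')+2\delta'(A+1)\right]$, which is strictly larger than $1+\delta'$; you should enlarge $C_0$ to at least this value (or argue, as the paper does, that if $C_k(\epsilon)$ ever drops below the target then you are already done and can stop the induction there). This is easily fixed, unlike the Lemma~\ref{lemma:GoodRatios} issue.
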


      We remark that when $\delta$ is very small, $\beta$ and $1 - \zeta$ will both be very close to $0$.
      The main obstacle in deducing Proposition \ref{prop:LipschitzConstant1} from Proposition \ref{prop:MainIterationArgument} is that, a priori, the $\sup_{r \in \cR_{\epsilon}} \ratios$ and $\sup_{r \in \cR_{\epsilon}} \invratios$ terms could be quite far from $1$.
      So we will show that the proportion of ``bad'' radii where these terms are far from $1$ is no larger than $\frac{2}{3}$.

      \begin{lemma}
        \label{lemma:GoodRatios}
        Let $A$ be as in Proposition \ref{prop:MainIterationArgument}.
        Choose a sequence $0 < \zeta_{+}(0) = \zeta_{-}(1) < \zeta(1) < \zeta_{+}(1) = \zeta_{-}(2) < \zeta(2) < \zeta_{+}(2) = \zeta_{-}(3) < \cdots < 1$ with $1 - \zeta_{+}(n) < \g(1 - \zeta(n))$ for all $n$.
        Let $C_0$ be as in Proposition \ref{prop:UpToConstants} with $\zeta = \zeta_{+}(0)$.
        Let
        \begin{align*}
          \cS_{\epsilon}^{(n)} \
          &\coloneqq \ \left\{\cR \subset (\epsilon^{1 - \zeta(n)}, \epsilon^{1 - \zeta_{+}(n)}) \cap \{8^j\}_{j \in \ZZ} : \# \cR \geq \frac{1}{3} \# ((\epsilon^{1 - \zeta(n)}, \epsilon^{1 - \zeta_{+}(n)}) \cap \{8^j\}_{j \in \ZZ})\right\},
        \end{align*}
        and define
        \begin{align*}
          X_{\epsilon}^{(n)} \
          &\coloneqq \ \min_{\cR \in \cS_{\epsilon}^{(n)}} \sup_{r \in \cR} \ratios, \\
          Y_{\epsilon}^{(n)} \
          &\coloneqq \ \min_{\cR \in \cS_{\epsilon}^{(n)}} \sup_{r \in \cR} \invratios.
        \end{align*}
        Fix $\delta \in (0,1/2)$ and choose $N = N(\delta)$ large enough that $(\frac{A}{A+1})^{N} C_0 < \delta^2/2$.
        There exists $\epsilon_N \in (0,1)$ such that for all $\epsilon \in (0, \epsilon_N)$, 
        \begin{align*}
          \# \left\{1 \leq n \leq 3N : X_{\epsilon}^{(n)} > 1 - \delta \right\} \
          &> \ 2N, \\
          \# \left\{1 \leq n \leq 3N : Y_{\epsilon}^{(n)} > 1 - \delta \right\} \
          &> \ 2N.
        \end{align*}

        \begin{proof}
          The idea of the proof is that if there is a sequence of $\epsilon$'s tending to $0$ along which at least $N$ of the $X_{\epsilon}^{(n)}$ are $\leq 1 - \delta$, then there is a subset $S \subset \{1, 2, \ldots, 3N\}$ with $\# S \geq N$ and a subsequence of $\epsilon$'s along which $X_{\epsilon}^{(n)} \leq 1 - \delta$ for all $n \in S$.
          Applying Proposition \ref{prop:MainIterationArgument} $N$ times with the exponents $\{\zeta_{+}(n) : n \in S\}$ will then show that along the given subsequence, $\locLFPP$ and $D_h$ satisfy a Lipschitz condition with Lipschitz constant strictly less than $1$, contradicting convergence in probability.
          The proof is similar for $X_{\epsilon}^{(n)}$ and $Y_{\epsilon}^{(n)}$, so we will do the $Y_{\epsilon}^{(n)}$ case.

          Assume there is a sequence $\cE = \cE(N)$ of $\epsilon$'s tending to $0$ along which $\#\{1 \leq n \leq N : Y_{\epsilon}^{(n)} \leq 1 - \delta\} \geq N$.
          For each set $S \subset \{1,2,\ldots, 3 N\}$ with $\# S \geq N$, let
          \begin{align*}
            \cE_S \
            &\coloneqq \ \left\{\epsilon \in \cE : S \subset \{1 \leq n \leq 3 N : Y_{\epsilon}^{(n)} \leq 1 - \delta \} \right\}.
          \end{align*}
          For each $\epsilon \in \cE$, we have $\epsilon \in \cE_{S}$ for $S = \{1 \leq n \leq 3 N : Y_{\epsilon}^{(n)} \leq 1 - \delta\}$.
          So $\cE$ is the union of the $\cE_S$ over all $S$ with $\# S \geq N$.
          It follows that there exists $S$ with $\# S \geq N$ such that $\# \cE_S = \infty$.
          Enumerate the first $N$ elements of $S$ as $j_1 < j_2 < \cdots < j_N$, and put $j_0 \coloneqq 0$.

          For each $\epsilon \in \cE_S$, we have $Y_{\epsilon}^{(j_k)} \leq 1 - \delta$ for all $1 \leq k \leq N$.
          Now iteratively apply Proposition \ref{prop:MainIterationArgument} with $(\zeta_{-}, \zeta, \zeta_{+}) = (\zeta_{+}(j_{k-1}), \zeta(j_k), \zeta_{+}(j_k))$ with $k \in \{1,2,3,\ldots, N\}$, with the radii sets $\cR_{\epsilon}^{(k)}$ equal to the minimizers of $Y_{\epsilon}^{(j_k)}$, with the initial Lipschitz constant $C_0(\epsilon) = C_0$, with 
          \begin{align*}
            \delta' \
            &\coloneqq \ \frac{\delta}{1 + 2(A+1)}
          \end{align*}
          in place of $\delta$, and with $R$ to be chosen later.
          It follows that for each $k \in \{1,2,\ldots, N\}$, there exists some $\beta_k > 0$ such that with probability at least $1 - O_{\epsilon}(\epsilon^{\beta_k})$ as $\epsilon \to 0$,
          \begin{align}
            D_h\left(B_{4 \epsilon^{1 - \zeta_{+}(j_k)}}(u), B_{4 \epsilon^{1 - \zeta_{+}(j_k)}}(v) \right) \
            &\leq \ C_k(\epsilon) \locLFPP\left(u,v \right) \ \forall (u,v) \in \hyperlink{Vhat}{\hat{V}_{\epsilon}(R)},
            \label{eq:GoodScalingRatiosLipschitzCondition}
          \end{align}
          where $C_k(\epsilon)$ is inductively defined by $C_0(\epsilon) = C_0$ and
          \begin{align*}
            C_{k+1}(\epsilon) \
            &\coloneqq \ \frac{A}{A+1} \left[Y_{\epsilon}^{(j_{k+1})} \vee C_k(\epsilon) \right] + \left[\frac{1 + \delta'}{A + 1} + 2 \delta' \right] Y_{\epsilon}^{(j_{k+1})}.
          \end{align*}

          We claim there exists $n \in \{0,1,2, \ldots, N\}$ such that $C_n(\epsilon) \leq 1 - \delta^2/2$ for all $\epsilon \in \cE_S$.
          Indeed, first note that if $C_k(\epsilon) \leq 1 - \delta^2/2$ for some $k \in \{0,1,2,\ldots, N-1\}$ and some $\epsilon \in \cE_S$, then
          \begin{align*}
            C_{k+1}(\epsilon) \
            &\leq \ \frac{A}{A+1}\left(1 - \frac{\delta^2}{2} \right) + \left[\frac{1 + \delta'}{A+1} + 2 \delta' \right] (1 - \delta) & (Y_{\epsilon}^{(j_{k+1})} \leq 1 - \delta) \\
            &= \ \frac{A}{A+1} \left(1 - \frac{\delta^2}{2} \right) + \frac{1 - \delta}{A+1} + \delta' (1 - \delta) \frac{1 + 2 (A + 1)}{A+1} \\
            &= \ \frac{A}{A+1} \left(1 - \frac{\delta^2}{2} \right) + \frac{1 - \delta}{A+1} + \frac{\delta(1 - \delta)}{A+1} & \left(\delta' = \frac{\delta}{1 + 2(A+1)} \right) \\
            &\leq \ 1 - \frac{\delta^2}{2}.
          \end{align*}
          So it will be enough to show that for every $\epsilon \in \cE_S$, there exists $n \in \{0,1,2, \ldots, N\}$ with $C_n(\epsilon) \leq 1 - \delta^2/2$.
          If $C_n(\epsilon) \leq 1 - \delta$ for some $n$, then this $n$ will suffice.
          Assuming $C_n(\epsilon) > 1 - \delta$ for all $n \in \{0,1,2, \ldots N\}$, then $Y_{\epsilon}^{(j_{n+1})} \vee C_n(\epsilon) = C_n(\epsilon)$ for all $n \in \{0,1,2, \ldots, N\}$ and hence by induction
          \begin{align*}
            C_N(\epsilon) \
            &= \ \left(\frac{A}{A+1} \right)^N C_0 + \sum_{k=0}^{N-1} \left(\frac{A}{A+1} \right)^k \left[\frac{1 + \delta'}{A+1} + 2 \delta' \right] Y_{\epsilon}^{(j_{N-k})} \\
            &\leq \ \left(\frac{A}{A+1} \right)^N C_0 + (A+1) \left[\frac{1 + \delta'}{A+1} + 2 \delta' \right] (1 - \delta) & (Y_{\epsilon}^{j_k} \leq 1 - \delta) \\
            &= \ \left(\frac{A}{A+1} \right)^N C_0 + (1 - \delta) + \delta' \left[1 + 2 (A + 1)\right](1 - \delta) \\
            &\leq \ \frac{\delta^2}{2} + (1 - \delta) + \delta(1 - \delta) & \left(\delta' = \frac{\delta}{1 + 2(A+1)} \right) \\
            &= \ 1 - \frac{\delta^2}{2}.
          \end{align*}

          It now follows from \eqref{eq:GoodScalingRatiosLipschitzCondition} with $k = n$ (using the fact that $C_n(\epsilon) \leq 1 - \delta^2/2$) that on $\{(0, \i) \in \hyperlink{Vhat}{\hat{V}_{\epsilon}(R)}\}$, except on an event of probability $O_{\epsilon}(\epsilon^{\beta_n})$ as $\epsilon \to 0$ through $\cE_S$,
          \begin{align*}
            \frac{\delta^2}{2} \locLFPP(0, \i)
            &\leq \left[\locLFPP\left(B_{4 \epsilon^{1 - \zeta_{+}(j_n)}}(0), B_{4 \epsilon^{1 - \zeta_{+}(j_n)}}(\i) \right) - D_h\left(B_{4 \epsilon^{1 - \zeta_{+}(j_n)}}(0), B_{4 \epsilon^{1 - \zeta_{+}(j_n)}}(\i) \right)\right] \\
            &\quad + \left[\locLFPP(0,\i) - \locLFPP\left(B_{4 \epsilon^{1 - \zeta_{+}(j_n)}}(0), B_{4 \epsilon^{1 - \zeta_{+}(j_n)}}(\i) \right)\right].
          \end{align*}
          The first term on the right-hand side converges to $0$ in probability, while the second term on the right-hand side is at most
          \begin{align*}
            \sup_{u \in \overline{B_{4 \epsilon^{1 - \zeta_{+}(n)}}(0)}} \locLFPP\left(0, u \right) + \sup_{v \in \overline{B_{4 \epsilon^{1 - \zeta_{+}(n)}}(\i)}} \locLFPP\left(v, \i \right),
          \end{align*}
          which also converges to $0$ in probability.
          But the left-hand side converges in probability to the positive random variable $\frac{\delta^2}{2} D_h(0, \i)$.
          We obtain a contradiction by choosing $R > 0$ and $\epsilon_0, p \in (0,1)$ such that $P\{(0, \i) \in \hyperlink{Vhat}{\hat{V}_{\epsilon}(R)}\} \geq p$ for all $\epsilon \in (0, \epsilon_0)$, which can be arranged using the convergence in probability of $\locLFPP$ to $D_h$.
        \end{proof}
      \end{lemma}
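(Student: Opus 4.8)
The plan is to argue by contradiction, following the sketch already given in the lemma statement. Suppose the first assertion fails for a sequence of $\epsilon$'s tending to $0$; then along that sequence $\#\{1 \le n \le 3N : X_{\epsilon}^{(n)} > 1 - \delta\} \le 2N$, equivalently $\#\{1 \le n \le 3N : X_{\epsilon}^{(n)} \le 1 - \delta\} \ge N$. Since there are only finitely many subsets $S \subset \{1, \ldots, 3N\}$, by pigeonhole there is a fixed $S$ with $\# S = N$ along a further subsequence $\cE_S$ (infinite) on which $X_{\epsilon}^{(n)} \le 1 - \delta$ for all $n \in S$. (The $Y$ case is identical with $\invratios$ in place of $\ratios$, so I will carry out the $Y$ case as written.) The point of having $3N$ indices is exactly that discarding the at most $2N$ ``bad'' indices still leaves $N$ ``good'' ones to iterate over.

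Next I would feed these $N$ good indices into Proposition \ref{prop:MainIterationArgument}, applied $N$ times in succession. The $k$-th application uses $(\zeta_{-}, \zeta, \zeta_{+}) = (\zeta_{+}(j_{k-1}), \zeta(j_k), \zeta_{+}(j_k))$ — note the telescoping of the exponents is precisely why the chain $\zeta_{+}(n) = \zeta_{-}(n+1)$ was set up, and why the condition $1 - \zeta_{+}(n) < \g(1-\zeta(n))$ is imposed — with the radius sets $\cR_{\epsilon}^{(k)}$ taken to be the minimizers realizing $Y_{\epsilon}^{(j_k)}$, with initial Lipschitz constant $C_0$ from Proposition \ref{prop:UpToConstants}, and with $\delta' = \delta/(1 + 2(A+1))$ in place of $\delta$. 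Crucially $A$ is universal, so the recursion for the Lipschitz constants is the one displayed in the lemma, with $\sup_{r \in \cR} \invratios$ replaced by $Y_{\epsilon}^{(j_k)} \le 1 - \delta$. The main computation is then the elementary induction showing that this recursion, started from $C_0$ and run $N$ times, produces some $C_n(\epsilon) \le 1 - \delta^2/2$ for every $\epsilon \in \cE_S$: either some $C_n$ already drops to $\le 1-\delta$ (and one checks $1 - \delta \le 1 - \delta^2/2$ is preserved under one more step by the displayed computation), or all $C_n > 1-\delta$, in which case $Y_{\epsilon}^{(j_{n+1})} \vee C_n = C_n$ and the recursion unwinds to a geometric sum bounded using $(\tfrac{A}{A+1})^N C_0 < \delta^2/2$ and the choice of $\delta'$.

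Finally, having obtained \eqref{eq:GoodScalingRatiosLipschitzCondition} with $k = n$ and $C_n(\epsilon) \le 1 - \delta^2/2$, I would extract the contradiction at the two specific points $0$ and $\i$. On the event $\{(0,\i) \in \hat V_{\epsilon}(R)\}$, the Lipschitz bound forces $D_h(B_{4\epsilon^{1-\zeta_{+}(j_n)}}(0), B_{4\epsilon^{1-\zeta_{+}(j_n)}}(\i)) \le (1 - \delta^2/2)\locLFPP(0,\i)$ with probability $1 - O_{\epsilon}(\epsilon^{\beta_n})$; rearranging gives $\tfrac{\delta^2}{2}\locLFPP(0,\i)$ bounded by a sum of two error terms, one being the difference between point-to-point and ball-to-ball LFPP/$D_h$ distances (which $\to 0$ in probability by local uniform convergence of $\locLFPP$ to $D_h$ and continuity of $D_h$), the other a supremum of $\locLFPP$-distances from a point to a shrinking ball (also $\to 0$). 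But the left side converges in probability to $\tfrac{\delta^2}{2}D_h(0,\i) > 0$ a.s., and $P\{(0,\i) \in \hat V_{\epsilon}(R)\} \ge p > 0$ uniformly for small $\epsilon$ after choosing $R$ large (again via convergence in probability of $\locLFPP$ to $D_h$, since $D_h$-geodesics between fixed points lie in a bounded region with positive probability). This contradicts the existence of the subsequence $\cE_S$, completing the proof; one takes $\epsilon_N$ smaller than any of the finitely many thresholds produced along the way. I expect the main obstacle to be bookkeeping: making sure the $N$-fold iteration of Proposition \ref{prop:MainIterationArgument} is legitimate (each application's hypotheses \eqref{eq:InitialLipschitzConstant}–\eqref{eq:InitialLipschitzConstantReversed} are supplied by the previous application's conclusions \eqref{eq:NewLipschitzConstant}–\eqref{eq:NewLipschitzConstantReversed}, with the exponents and $R$ lined up), and that $\delta < 1/2$ keeps all the intermediate quantities in the ranges required by the earlier lemmas.
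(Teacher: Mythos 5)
Your proposal is correct and follows essentially the same route as the paper: contradiction via pigeonhole to extract a fixed index set $S$ and an infinite subsequence $\cE_S$, iterating Proposition \ref{prop:MainIterationArgument} $N$ times with the telescoping exponents and $\delta' = \delta/(1+2(A+1))$, the two-case induction showing some $C_n(\epsilon) \leq 1 - \delta^2/2$, and the final contradiction at the fixed pair $(0, \i)$ using convergence in probability of $\locLFPP$ to $D_h$. Your closing remark about the bookkeeping is exactly the right thing to flag — the iteration does need both directions \eqref{eq:NewLipschitzConstant}–\eqref{eq:NewLipschitzConstantReversed} from each application to feed the next step's hypotheses \eqref{eq:InitialLipschitzConstant}–\eqref{eq:InitialLipschitzConstantReversed}, even though only the $Y$-direction constant is controlled by $Y_{\epsilon}^{(j_k)} \leq 1-\delta$ — and the paper handles this implicitly in the same way.
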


      \begin{proof}[Proof of Proposition \ref{prop:LipschitzConstant1}]
        Choose a sequence $0 < \zeta_{+}(0) = \zeta_{-}(1) < \zeta(1) < \zeta_{+}(1) = \zeta_{-}(2) < \zeta(2) < \zeta_{+}(2) = \zeta_{-}(3) < \cdots < 1$ with $1 - \zeta_{+}(n) < \g(1 - \zeta(n))$ for all $n$.
        Let $A$ be as in Proposition \ref{prop:MainIterationArgument}, and let $C_0$ be as in Proposition \ref{prop:UpToConstants} with $\zeta = \zeta_{+}(0)$.
        Choose $N$ large enough that $(\frac{A}{A+1})^N C_0 < \delta^2$, and let $\epsilon_N$ be as in Lemma \ref{lemma:GoodRatios} with $\frac{\delta}{1 + \delta}$ in place of $\delta$, so for every $\epsilon \in (0, \epsilon_N)$, the sets
        \begin{align*}
          U_{\epsilon} \
          &\coloneqq \ \left\{1 \leq n \leq 3 N : X_{\epsilon}^{(n)} > \frac{1}{1+\delta} \right\}, \\
          L_{\epsilon} \
          &\coloneqq \ \left\{1 \leq n \leq 3 N : Y_{\epsilon}^{(n)} > \frac{1}{1 + \delta} \right\}
        \end{align*}
        have size at least $2N$, hence $\# (U_{\epsilon} \cap L_{\epsilon}) \geq N$.
        For each set $S \subset \{1,2,\ldots, 3 N\}$ with $\# S \geq N$, let
        \begin{align*}
          \cE_S \
          &\coloneqq \ \left\{\epsilon \in (0, \epsilon_N) : S \subset U_{\epsilon} \cap L_{\epsilon} \right\},
        \end{align*}
        and note that because $\epsilon \in \cE_{U_{\epsilon} \cap L_{\epsilon}}$ for all $\epsilon \in (0, \epsilon_N)$, the interval $(0, \epsilon_N)$ is the union of the $\cE_S$ over all subsets $S \subset \{1,2, \ldots, 3N\}$ with $\# S \geq N$. 

        Fix $S \subset \{1,2,\ldots, 3 N\}$ with $\# S \geq N$ such that $\cE_S$ has $0$ as a limit point.
        For each $n \in S$ and each $\epsilon \in \cE_S$, the set 
        \begin{align}
          \cR \
          &\coloneqq \ \left\{r \in (\epsilon^{1 - \zeta(n)}, \epsilon^{1 - \zeta_{+}(n)}) \cap \{8^j\}_{j \in \ZZ} : \frac{1}{1 + \delta} \leq \ratios \leq 1 + \delta \right\}
          \label{eq:GoodScales}
        \end{align}
        has size $\geq \frac{1}{3} \# ((\epsilon^{1 - \zeta(n)}, \epsilon^{1 - \zeta_{+}(n)}) \cap \{8^j\}_{j \in \ZZ})$.
        Indeed, if this were not true, then one of the sets
        \begin{align*}
          \cL \
          &\coloneqq \ \left\{r \in (\epsilon^{1 - \zeta(n)}, \epsilon^{1 - \zeta_{+}(n)}) \cap \{8^j\}_{j \in \ZZ} : \ratios < \frac{1}{1+\delta} \right\}, \\
          \cU \
          &\coloneqq \ \left\{r \in (\epsilon^{1 - \zeta(n)}, \epsilon^{1 - \zeta_{+}(n)}) \cap \{8^j\}_{j \in \ZZ} : \invratios < \frac{1}{1 + \delta} \right\}
        \end{align*}
        must have size $\geq \frac{1}{3} \# ((\epsilon^{1 - \zeta(n)}, \epsilon^{1 - \zeta_{+}(n)}) \cap \{8^j\}_{j \in \ZZ})$.
        But if, say, $\# \cL \geq \frac{1}{3} \#((\epsilon^{1 - \zeta(n)}, \epsilon^{1 - \zeta_{+}(n)}) \cap \{8^j\}_{j \in \ZZ})$, then
        \begin{align*}
          \frac{1}{1+\delta} \
          &< \ X_{\epsilon}^{(n)} \
          \leq \ \sup_{r \in \cL} \ratios \
          \leq \ \frac{1}{1+\delta},
        \end{align*}
        a contradiction.

        Now enumerate the first $N$ elements of $S$ as $j_1 < j_2 < \cdots < j_N$.
        Iteratively apply Proposition \ref{prop:MainIterationArgument} using the initial Lipschitz constant $C_0$, with $(\zeta_{-}, \zeta, \zeta_{+}) = (\zeta_{-}(j_k), \zeta(j_k), \zeta_{+}(j_k))$ for $k \in \{1,2,\ldots, N\}$, with $\cR_{\epsilon}^{(k)} \subset (\epsilon^{1 - \zeta(j_k)}, \epsilon^{1 - \zeta_{+}(j_k)}) \cap \{8^j\}_{j \in \ZZ}$ defined as in \eqref{eq:GoodScales} with $n = j_k$, and with $\delta' \coloneqq \frac{\delta}{1 + 2 (A+1)}$ in place of $\delta$, to get that for some $\beta = \beta(S) > 0$, with probability $1 - O_{\epsilon}(\epsilon^{\beta})$ as $\epsilon \to 0$,
        \begin{align*}
          \locLFPP\left(B_{4 \epsilon^{1 - \zeta_{+}(j_N)}}(u), B_{4 \epsilon^{1 - \zeta_{+}(j_N)}}(v)\right) \
          &\leq \ C_N(\epsilon) D_h\left(u,v \right) \ \forall (u,v) \in \hyperlink{V}{V(R)}, \\
          D_h\left(B_{4 \epsilon^{1 - \zeta_{+}(j_N)}}(u), B_{4 \epsilon^{1 - \zeta_{+}(j_N)}}(v) \right) \
          &\leq \ C_N'(\epsilon) \locLFPP\left(u,v \right) \ \forall (u,v) \in \hyperlink{Vhat}{\hat{V}_{\epsilon}(R)},
        \end{align*}
        where $C_k(\epsilon)$ and $C_k'(\epsilon)$ are defined inductively by $C_0(\epsilon) \coloneqq C_0$, $C_0'(\epsilon) \coloneqq C_0$, and
        \begin{align*}
          C_{k+1}(\epsilon) \
          &\coloneqq \ \frac{A}{A+1} \left[\sup_{r \in \cR_{\epsilon}^{(k+1)}} \ratios \vee C_k(\epsilon) \right] + \left[\frac{1 + \delta'}{A+1} + 2 \delta' \right] \sup_{r \in \cR_{\epsilon}^{(k+1)}} \ratios, \\
          C_{k+1}'(\epsilon) \
          &\coloneqq \ \frac{A}{A+1} \left[\sup_{r \in \cR_{\epsilon}^{(k+1)}} \invratios \vee C_k'(\epsilon) \right] + \left[\frac{1 + \delta'}{A+1} + 2 \delta' \right] \sup_{r \in \cR_{\epsilon}^{(k+1)}} \invratios.
        \end{align*}

        Note that $C_N(\epsilon) \vee C_N'(\epsilon) \leq (1 + \delta)^2$ for all $\epsilon \in \cE_S$.
        Indeed, if for some $\epsilon \in \cE_S$ and some $k \in \{0,1,2,\ldots, N\}$, we have $C_k(\epsilon) \leq (1 + \delta)^2$, then
        \begin{align*}
          C_{k+1}(\epsilon) \
          &\leq \ \frac{A}{A+1} \left(1 + \delta \right)^2 + \left[\frac{1 + \delta'}{A+1} + 2 \delta' \right](1 + \delta) \\
          &= \ \frac{A}{A+1} \left(1 + \delta \right)^2 + \frac{1 + \delta}{A+1} + \delta' \frac{1 + 2 (A + 1)}{A + 1}(1 + \delta) \\
          &= \ \frac{A}{A+1} \left(1 + \delta \right)^2 + \frac{1 + \delta}{A+1} + \frac{\delta (1 + \delta)}{A+1} \\
          &= \ \left(1 + \delta \right)^2.
        \end{align*}
        and likewise $C_{k+1}'(\epsilon) \leq (1 + \delta)^2$.
        So it is enough to show that for every $\epsilon \in \cE_S$, there are some $k, \ell \in \{1,2,\ldots, N\}$ with $C_k(\epsilon) \vee C_{\ell}'(\epsilon) \leq (1 + \delta)^2$.
        If $C_k(\epsilon) \vee C_{\ell}'(\epsilon) \leq 1 + \delta$ for some $k,\ell \in \{1,2,\ldots,N\}$, then we can take these $k$ and $\ell$.
        Assuming $C_k(\epsilon) > 1 + \delta$ for all $k \in \{1,2, \ldots, N\}$,
        \begin{align*}
          C_N(\epsilon) \
          &= \ \left(\frac{A}{A+1} \right)^{N} C_0 + \sum_{k=0}^{N-1} \left(\frac{A}{A+1} \right)^k \left[\frac{1 + \delta'}{A+1} + 2 \delta' \right] \sup_{r \in \cR_{\epsilon}^{(k)}} \ratios \\
          &\leq \ \left(\frac{A}{A+1} \right)^N C_0 + \sum_{k=0}^{N-1} \left(\frac{A}{A+1} \right)^k \left[\frac{1 + \delta'}{A+1} + 2 \delta' \right] (1 + \delta) \\
          &\leq \ \delta^2 + (A + 1) \left[\frac{1 + \delta'}{A+1} + 2 \delta' \right](1 + \delta) \\
          &= \ \delta^2 + (1 + \delta) + \delta' \left(1 + 2 (A+1) \right) \\
          &= \ \left(1 + \delta \right)^2.
        \end{align*}
        Likewise, if $C_{\ell}'(\epsilon) > 1 + \delta$ for all $\ell \in \{1,2, \ldots, N\}$, then $C_N'(\epsilon) \leq (1 + \delta)^2$.

        Summarizing, we have shown that as $\epsilon \to 0$ along $\cE_S$, it holds with probability $1 - O_{\epsilon}(\epsilon^{\beta(S)})$ that
        \begin{align*}
          \locLFPP\left(B_{4 \epsilon^{1 - \zeta_{+}(j_N)}}(u), B_{4 \epsilon^{1 - \zeta_{+}(j_N)}}(v) \right) \
          &\leq \ \left(1 + \delta \right)^2 D_h\left(u,v \right) \ \forall (u,v) \in \hyperlink{V}{V(R)}, \\
          D_h\left(B_{4 \epsilon^{1 - \zeta_{+}(j_N)}}(u), B_{4 \epsilon^{1 - \zeta_{+}(j_N)}}(v) \right) \
          &\leq \ \left(1 + \delta \right)^2 \locLFPP\left(u,v \right) \ \forall (u,v) \in \hyperlink{Vhat}{\hat{V}_{\epsilon}(R)}.
        \end{align*}
        Repeat this for all finitely many $S \subset \{1,2,\ldots, 3N\}$ with $\# S \geq N$ and $\inf \cE_S = 0$, then let $\beta$ be the smallest of the $\beta(S)$ while letting $\zeta$ be the largest of the $\zeta_{+}(j_N)$ to get \eqref{eq:LipschitzConstant1LocalDh} and \eqref{eq:LipschitzConstant1DhLocal} with $(1 + \delta)^2$ in place of $1 + \delta$.
      \end{proof}

  \section{Proof of Theorem \ref{thm:AlmostSureConvergence}}
    \label{section:ProofOfMainTheorem}
    The goal of this section is to deduce Theorem \ref{thm:AlmostSureConvergence} from Proposition \ref{prop:LipschitzConstant1}.
    The strategy of the proof is roughly the following.
    By Borel-Cantelli applied to the sequence $(n^{-a})_{n=1}^{\infty}$ with $a$ large enough, \eqref{eq:LipschitzConstant1LocalDh} and \eqref{eq:LipschitzConstant1DhLocal} will almost surely hold for $\epsilon = n^{-a}$ for all $n$ sufficiently large.
    This is enough to show that for all $(u,v) \in \hyperlink{V}{V(R)} \cap \hyperlink{Vhat}{\hat{V}_{\epsilon}(R)}$,
    \begin{align}
      \left|\locLFPP[n^{-a}](u,v) - D_h(u,v) \right| \
      &\leq \ \delta \left( D_h(u,v) \vee \locLFPP[n^{-a}](u,v) \right) + o_{n}(1),
      \label{eq:UniformConvergenceIntuition}
    \end{align}
    where the $o_{n}(1)$ is the error from removing the $4 \epsilon^{1 - \zeta}$ balls in \eqref{eq:LipschitzConstant1LocalDh} and \eqref{eq:LipschitzConstant1DhLocal}.
    This error converges to $0$ in probability as $n \to \infty$, but we will argue that it actually converges to $0$ almost surely.
    We would like to now send $\delta \to 0$ and $R \to \infty$ to get almost sure convergence along the sequence $(n^{-a})_{n=1}^{\infty}$, then use a continuity argument to extend the convergence to the continuum index $\epsilon$.
    However, $a$ depends on $\delta$ (because the exponent $\beta$ in Proposition \ref{prop:LipschitzConstant1} depends on $\delta$), so we actually need to do the continuity argument first to show \eqref{eq:UniformConvergenceIntuition} holds along the continuum index $\epsilon$, then send $\delta \to 0$ and $R \to \infty$.

    Before making this reasoning precise, we will need a few more lemmas.
    The first two are the continuity estimates needed to pass from the discrete index $n^{-a}$ to the continuum.

    \begin{lemma}
      \label{lemma:ScalingConstantsContinuity}
      Fix $a > 0$.
      For each $\epsilon \in (0,1)$, let $n = n(\epsilon)$ be the unique integer with $\epsilon \in \hlint{(n+1)^{-a}}{n^{-a}}$.
      Then
      \begin{align*}
        \lim_{\epsilon \to 0} \frac{\fa_{\epsilon}}{\fa_{n(\epsilon)^{-a}}} \
        &= \ 1.
      \end{align*}

      \begin{proof}
        By \cite[Corollary 1.11]{ExistenceAndUniqueness}, $\epsilon \mapsto \fa_{\epsilon}$ is regularly varying with exponent $1 - \xi Q$.
        So $\lim_{\epsilon \to 0} \frac{\fa_{C \epsilon}}{\fa_{\epsilon}} = C^{1 - \xi Q}$ uniformly over all $C$ in the interval $[1/2, 2]$.
        Since $\frac{\epsilon}{n(\epsilon)^{-a}} \in [1/2, 2]$ for all $\epsilon$ sufficiently small,
        \begin{align*}
          \left|\frac{\fa_{\epsilon}}{\fa_{n(\epsilon)^{-a}}} - 1 \right| \
          &\leq \ \left|\frac{\fa_{\frac{\epsilon}{n(\epsilon)^{-a}} n(\epsilon)^{-a}}}{\fa_{n(\epsilon)^{-a}}} - \left(\frac{\epsilon}{n(\epsilon)^{-a}} \right)^{1 - \xi Q} \right| + \left|\left(\frac{\epsilon}{n(\epsilon)^{-a}} \right)^{1 - \xi Q} - 1 \right| \
          \to \ 0.
        \end{align*}
      \end{proof}
    \end{lemma}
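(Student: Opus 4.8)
The plan is to deduce Lemma \ref{lemma:ScalingConstantsContinuity} from the fact, established in \cite[Corollary 1.11]{ExistenceAndUniqueness}, that $\epsilon \mapsto \fa_\epsilon$ is regularly varying as $\epsilon \to 0$ with index $1 - \xi Q$; that is, for every fixed $C > 0$,
\[
  \lim_{\epsilon \to 0} \frac{\fa_{C \epsilon}}{\fa_\epsilon} \ = \ C^{1 - \xi Q}.
\]
The one nontrivial input I would invoke on top of this is that the convergence is in fact \emph{uniform} over $C$ in any compact subset of $(0,\infty)$, in particular over $C \in [1/2, 2]$. This is the uniform convergence theorem for regularly varying functions (applied, after the change of variable $\epsilon \mapsto 1/\epsilon$, to a function regularly varying at $\infty$); it applies here because $\epsilon \mapsto \fa_\epsilon$ is Borel measurable, being a quantile of $D_h^\epsilon$ restricted to a fixed square, which depends measurably on $\epsilon$ thanks to the joint continuity of $h_\epsilon^*$ in $(z,\epsilon)$.

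First I would record the elementary estimate on the ratio $C_\epsilon := \epsilon \, n(\epsilon)^a = \epsilon / n(\epsilon)^{-a}$. From $\epsilon \in \hlint{(n+1)^{-a}}{n^{-a}}$ with $n = n(\epsilon)$ we get $\epsilon\, n^a \le 1$ and $\epsilon\, n^a > (n/(n+1))^a$, so $C_\epsilon \in \bigl( (n(\epsilon)/(n(\epsilon)+1))^a, \, 1 \bigr]$. In particular $C_\epsilon \in [1/2, 2]$ for all $\epsilon$ sufficiently small, and $C_\epsilon \to 1$ as $\epsilon \to 0$.

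Then, since $\fa_\epsilon = \fa_{C_\epsilon \cdot n(\epsilon)^{-a}}$, the triangle inequality gives
\[
  \left| \frac{\fa_\epsilon}{\fa_{n(\epsilon)^{-a}}} - 1 \right|
  \ \le \
  \left| \frac{\fa_{C_\epsilon \cdot n(\epsilon)^{-a}}}{\fa_{n(\epsilon)^{-a}}} - C_\epsilon^{1 - \xi Q} \right|
  + \left| C_\epsilon^{1 - \xi Q} - 1 \right|.
\]
As $\epsilon \to 0$ we have $n(\epsilon)^{-a} \to 0$, so the first term on the right is bounded by $\sup_{C \in [1/2,2]} \bigl| \fa_{C \cdot n(\epsilon)^{-a}} / \fa_{n(\epsilon)^{-a}} - C^{1 - \xi Q} \bigr|$, which tends to $0$ by the uniform convergence noted above; the second term tends to $0$ because $C_\epsilon \to 1$ and $x \mapsto x^{1 - \xi Q}$ is continuous. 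This establishes $\fa_\epsilon / \fa_{n(\epsilon)^{-a}} \to 1$.

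The only delicate point is the uniformity: a priori the regular variation of $\fa$ controls $\fa_{C\epsilon}/\fa_\epsilon$ only for a \emph{fixed} ratio $C$, whereas here the ratio $C_\epsilon$ drifts (albeit toward $1$). Trading the fixed-ratio limit for one that is uniform on a neighbourhood of $1$ is exactly what the uniform convergence theorem provides, and the only hypothesis one needs to check to apply it — measurability of $\epsilon \mapsto \fa_\epsilon$ — is routine. Everything else in the argument is elementary.
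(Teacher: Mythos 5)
Your proposal is correct and takes essentially the same route as the paper: both deduce the claim from the regular variation of $\epsilon \mapsto \fa_{\epsilon}$ established in \cite[Corollary 1.11]{ExistenceAndUniqueness}, upgrade the pointwise limit $\fa_{C\epsilon}/\fa_{\epsilon} \to C^{1-\xi Q}$ to a limit uniform over $C \in [1/2,2]$, and finish with the triangle inequality. You are a bit more explicit than the paper in naming the uniform convergence theorem for regularly varying functions and in checking the measurability hypothesis needed to invoke it, but the structure of the argument is identical.
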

  
    \begin{lemma}
      \label{lemma:ContinuityHeatKernelMollification}
      Fix $a > 0$ and a compact set $K \subset \CC$.
      Almost surely,
      \begin{align*}
        \lim_{n \to \infty} \sup_{z \in K} \sup_{\epsilon, \delta \in [(n+1)^{-a}, n^{-a}]} \left|h_{\epsilon}^{*}(z) - h_{\delta}^{*}(z) \right| \vee \left|\hat{h}_{\epsilon}^{*}(z) - \hat{h}_{\delta}^{*}(z) \right| \
        &= \ 0.
      \end{align*}
  
      \begin{proof}
        It will suffice by Lemma \ref{lemma:PropertiesOfLocalizedFieldAndLFPP} to prove 
        \begin{align*}
          \lim_{n \to \infty} \sup_{z \in K} \sup_{\epsilon, \delta \in [(n+1)^{-a}, n^{-a}]} \left|h_{\epsilon}^{*}(z) - h_{\delta}^{*}(z) \right| \
          &= \ 0.
        \end{align*}
        Write $h_{\epsilon}^{*}(z)$ and $h_{\delta}^{*}(z)$ in polar coordinates, then use the Cauchy-Schwarz inequality to get
        \begin{align*}
          \left|h_{\epsilon}^{*}(z) - h_{\delta}^{*}(z) \right| \
          &\leq \ \int\limits_{0}^{\infty} r |h_r(z)| \left|\frac{2}{\epsilon^2} e^{-r^2/\epsilon^2} - \frac{2}{\delta^2} e^{-r^2/\delta^2} \right| \, \D r \\
          &= \ \frac{2}{\delta^2} \int\limits_{0}^{\infty} r |h_r(z)| e^{-r^2/\delta^2} \left|\frac{\delta^2}{\epsilon^2} \left(e^{-r^2/\delta^2} \right)^{\delta^2/\epsilon^2 - 1} - 1 \right| \, \D r \\
          &\leq \ \frac{2}{\delta^2} \left(\int_{0}^{\infty} r^2 |h_r(z)|^2 e^{-r^2/\delta^2} \, \D r \right)^{1/2} \\
          &\qquad \qquad \cdot \left(\int_{0}^{\infty} e^{-r^2/\delta^2} \left|\frac{\delta^2}{\epsilon^2} \left(e^{-r^2/\delta^2} \right)^{\delta^2/\epsilon^2 - 1} - 1 \right|^2 \, \D r \right)^{1/2}.
        \end{align*}
        For the first integral, use the bound $|h_r(z)| \leq C \max\{\log(1/r), \log r, 1\}$ for all $z \in K$ and all $r > 0$, where $C$ is some $K$-dependent random variable (see \cite[Lemma 2.2]{WeakLQGMetrics}), to get
        \begin{align*}
          \int\limits_{0}^{\infty} r^2 |h_r(z)|^2 e^{-r^2/\delta^2} \, \D r \
          &\leq \ C^2 \int\limits_{0}^{1} r^2(1 - \log r)^2 e^{-r^2/\delta^2} \, \D r + C^2 \int\limits_{1}^{\infty} r^2 (1 + \log r)^2 e^{-r^2/\delta^2} \, \D r \\
          &= \ C^2 \int\limits_{0}^{1/\delta} \delta^3 u^2 \left(1 - \log \delta - \log u \right)^2 e^{-u^2} \, \D u \\
          &\qquad \qquad + C^2 \int\limits_{1/\delta}^{\infty} \delta^3 u^2 (1 + \log \delta + \log u)^2 e^{-u^2} \, \D u \\
          &\leq \ C \delta^3 \left(\log \delta^{-1} \right)^2
        \end{align*}
        where we have absorbed constants in to $C$.
        For the second integral, change variables $r = \delta u$ to get
        \begin{align*}
          \int\limits_{0}^{\infty} e^{-r^2/\delta} \left|\frac{\delta^2}{\epsilon^2} \left(e^{-r^2/\delta^2} \right)^{\delta^2/\epsilon^2 - 1} - 1 \right|^2 \, \D r \
          &= \ \delta \int\limits_{0}^{\infty} \left(\frac{\delta^4}{\epsilon^4} e^{-u^2(2 \delta^2/\epsilon^2 - 1)} - 2 \frac{\delta^2}{\epsilon^2} e^{-u^2 \delta^2/\epsilon^2} + e^{-u^2} \right) \, \D u \\
          &= \ \frac{\sqrt{\pi}}{2} \delta \left(\frac{\delta^4}{\epsilon^4} \frac{1}{\sqrt{2 \delta^2/\epsilon^2 - 1}} - 2 \frac{\delta}{\epsilon} + 1 \right).
        \end{align*}
        Writing $s = \delta/\epsilon$, Taylor expanding around $s = 1$ shows that there is a constant $c > 0$ such that if, say, $s \in [0.9, 1.1]$,
        \begin{align*}
          \frac{\delta^4}{\epsilon^4} \frac{1}{\sqrt{2 \delta^2/\epsilon^2 - 1}} - 2 \frac{\delta}{\epsilon} + 1 \
          &= \ \frac{s^4}{\sqrt{2 s^2 - 1}} - 2 s + 1 \\
          &\leq \ \left(1 + 2 (s-1) + c (s - 1)^2 \right) - 2s + 1 \\
          &= \ c (s - 1)^2.
        \end{align*}
        Combining, we obtain
        \begin{align*}
          \left|h_{\epsilon}^{*}(z) - h_{\delta}^{*}(z) \right| \
          &\leq \ C \log(\delta^{-1}) \left(\frac{\delta}{\epsilon} - 1 \right)
        \end{align*}
        for another random variable $C$.
        When $\delta, \epsilon \in [(n+1)^{-a}, n^{-a}]$ with $\epsilon \leq \delta$, this becomes 
        \begin{align*}
          \left|h_{\epsilon}^{*}(z) - h_{\delta}^{*}(z) \right| \
          &\leq \ C a \log(n+1) \left(\left(\frac{n+1}{n} \right)^a - 1 \right),
        \end{align*}
        which converges to $0$ as $n \to \infty$.
      \end{proof}
    \end{lemma}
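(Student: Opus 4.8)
The plan is to establish a quantitative modulus of continuity in the mollification parameter for $h_\epsilon^*(z)$, uniform over $z$ in the compact set $K$, and then verify it decays fast enough to survive the slowly growing number of scales in each window $[(n+1)^{-a}, n^{-a}]$. First I would reduce to the statement for $h_\epsilon^*$ alone: fixing a bounded open $U \supset K$, part \ref{loc:UniformComparison} of Lemma \ref{lemma:PropertiesOfLocalizedFieldAndLFPP} gives $\sup_{z \in \overline{U}}|h_\epsilon^*(z) - \hat h_\epsilon^*(z)| \to 0$ almost surely as $\epsilon \to 0$, so the triangle inequality $|\hat h_\epsilon^*(z) - \hat h_\delta^*(z)| \leq |\hat h_\epsilon^*(z) - h_\epsilon^*(z)| + |h_\epsilon^*(z) - h_\delta^*(z)| + |h_\delta^*(z) - \hat h_\delta^*(z)|$ reduces the $\hat h$ part of the claim to the $h$ part.

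Next I would pass to polar coordinates about $z$ using the circle average process $h_r(z)$: since $p_{\epsilon^2/2}(z-w)$ depends only on $|z-w|$, integrating out the angle yields
\[
  h_\epsilon^*(z) \ = \ \int_0^\infty h_r(z)\,\frac{2r}{\epsilon^2}\,e^{-r^2/\epsilon^2}\,\D r,
\]
so $h_\epsilon^*(z) - h_\delta^*(z) = \int_0^\infty h_r(z)\big(\frac{2r}{\epsilon^2}e^{-r^2/\epsilon^2} - \frac{2r}{\delta^2}e^{-r^2/\delta^2}\big)\,\D r$. Rewriting the weight as $\frac{2r}{\delta^2}e^{-r^2/\delta^2}\big(\frac{\delta^2}{\epsilon^2}(e^{-r^2/\delta^2})^{\delta^2/\epsilon^2 - 1} - 1\big)$ and applying the Cauchy--Schwarz inequality against $e^{-r^2/\delta^2}\,\D r$ bounds $|h_\epsilon^*(z) - h_\delta^*(z)|$ by the product of $\frac{2}{\delta^2}\big(\int_0^\infty r^2|h_r(z)|^2 e^{-r^2/\delta^2}\,\D r\big)^{1/2}$ and $\big(\int_0^\infty e^{-r^2/\delta^2}|\frac{\delta^2}{\epsilon^2}(e^{-r^2/\delta^2})^{\delta^2/\epsilon^2-1} - 1|^2\,\D r\big)^{1/2}$.

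For the random factor I would invoke the a.s. logarithmic bound $|h_r(z)| \leq C\max\{\log(1/r),\log r,1\}$ for all $z \in K$ and $r > 0$, with $C$ a $K$-dependent random variable (e.g. \cite[Lemma 2.2]{WeakLQGMetrics}); after the substitution $r = \delta u$ this gives $\int_0^\infty r^2|h_r(z)|^2 e^{-r^2/\delta^2}\,\D r \leq C\delta^3(\log\delta^{-1})^2$ uniformly in $z$. The deterministic factor is a Gaussian integral equal to $\frac{\sqrt\pi}{2}\delta\big(\frac{s^4}{\sqrt{2s^2-1}} - 2s + 1\big)$ with $s \coloneqq \delta/\epsilon$, and a Taylor expansion at $s = 1$ shows it is $O(\delta(s-1)^2)$ there. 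Multiplying, the powers of $\delta$ cancel exactly ($\delta^{-2}\cdot\delta^{3/2}\cdot\delta^{1/2} = 1$) and one obtains $|h_\epsilon^*(z) - h_\delta^*(z)| \leq C\log(\delta^{-1})\,|\delta/\epsilon - 1|$ uniformly over $z \in K$, for a random constant $C$ independent of $\epsilon,\delta,z$. Finally, for $\epsilon,\delta \in [(n+1)^{-a}, n^{-a}]$ one has $\log(\delta^{-1}) \leq a\log(n+1)$ and $|\delta/\epsilon - 1| \leq ((n+1)/n)^a - 1 = O(n^{-1})$, so the bound is $\leq Ca\log(n+1)\big(((n+1)/n)^a - 1\big) \to 0$; taking suprema over $z$ and over $(\epsilon,\delta)$ completes the proof.

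The step I expect to be delicate is the power counting in the Cauchy--Schwarz estimate: a crude bound on the kernel difference yields only a fixed power of $\delta$, which cannot beat the $\log(\delta^{-1}) \sim \log n$ factor present on $[(n+1)^{-a}, n^{-a}]$, so it is essential to extract the small factor $|\delta/\epsilon - 1|$ from the second moment (via the $O(\delta(s-1)^2)$ behaviour of the deterministic integral near $s=1$) and to check that the $\delta^{-2}$ prefactor is exactly absorbed. The remaining computations --- the explicit Gaussian integrals and the Taylor expansion of $s \mapsto s^4/\sqrt{2s^2-1} - 2s + 1$ at $s=1$ --- are routine.
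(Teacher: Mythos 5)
Your proposal is correct and follows essentially the same route as the paper's proof: reduce to $h_\epsilon^*$ via part \ref{loc:UniformComparison} of Lemma \ref{lemma:PropertiesOfLocalizedFieldAndLFPP} and the triangle inequality, rewrite $h_\epsilon^*(z)$ as a weighted integral of circle averages, apply Cauchy--Schwarz, bound the random factor with the a.s.\ logarithmic envelope for $h_r(z)$ from \cite[Lemma 2.2]{WeakLQGMetrics}, and evaluate the deterministic Gaussian integral exactly, Taylor-expanding $s \mapsto s^4/\sqrt{2s^2-1} - 2s + 1$ at $s=1$ to extract the small factor $(s-1)^2$ that cancels the $\delta^{-2}$ prefactor. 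The resulting bound $C\log(\delta^{-1})\,|\delta/\epsilon - 1| = O(a\log(n+1)\cdot n^{-1})$ on $[(n+1)^{-a}, n^{-a}]$ is precisely what the paper obtains, and your observation about the power-counting delicacy is the same consideration the paper's argument is built around.
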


    The next lemma shows that the errors from replacing balls by points in \eqref{eq:LipschitzConstant1LocalDh} and \eqref{eq:LipschitzConstant1DhLocal} converges to $0$ almost surely.
    For our purposes, we will apply this with $1 - \zeta$ very close to $0$, in which case the trivial bound for the $\LFPP$- or $\locLFPP$-length of a line segment isn't sufficient.

    \begin{lemma}
      \label{lemma:LengthsOfSmallSegments}
      Fix $\zeta \in (0,1)$.
      There exists $a_{*} = a_{*}(\zeta) > 0$ such that for each $a \geq a_{*}$ and each compact $K \subset \CC$, almost surely
      \begin{align}
        \lim_{n \to \infty} \sup_{\substack{z,w \in K \\ |z - w| \leq 4 n^{-a(1 - \zeta)}}} \LFPP[n^{-a}](z,w) \vee \locLFPP[n^{-a}](z,w) \
        &= \ 0. \label{eq:LengthsOfSmallSegments}
      \end{align}
      
      \begin{proof}
        The idea of the proof is that if $1 - \zeta$ is very close to $1$, then \eqref{eq:LengthsOfSmallSegments} will hold by a trivial upper bound on the $\LFPP$-length or $\locLFPP$-length of a line segment of Euclidean length $\leq 4 n^{-a(1 - \zeta)}$.
        However, the claim of Lemma \ref{lemma:LengthsOfSmallSegments} is that this holds even when $1 - \zeta$ is close to $0$.
        To obtain the latter, we can use Proposition \ref{prop:UpToConstants} with some $\zeta'$ close to $0$ to upper bound the left-hand side of \eqref{eq:LengthsOfSmallSegments} by the $D_h$-length of a segment of Euclidean length $\leq 4 n^{-a(1 - \zeta)}$, plus the left-hand side of \eqref{eq:LengthsOfSmallSegments} with $\zeta'$ in place of $\zeta$.
        As long as $\zeta'$ is close enough to $0$ for the trivial estimate to hold, Lemma \ref{lemma:LengthsOfSmallSegments} will follow from the continuity of $D_h$.

        Fix $\eta \in (0,1)$ and $R > 0$.
        We will first prove that there is a random variable $C > 0$ such that almost surely, for all sufficiently small $\epsilon$ and all $z \in B_R(0)$,
        \begin{align}
          \left|h_{\epsilon}^{*}(z) \right| \vee \left|\hat{h}_{\epsilon}^{*}(z) \right| \
          &\leq \ (1 + \eta)(2 + \eta) \log \epsilon^{-1} + C.
          \label{eq:TruncatedHeatKernelMollificationBound}
        \end{align}
        Indeed, elementary estimates for the circle average process (see \cite[Lemma 2.3]{WeakLQGMetrics} and \cite[Lemma 3.1]{ThickPoints}) imply
        \begin{align*}
          \lim_{r \to \infty} \sup_{z \in B_R(0)} \frac{|h_r(z)|}{\log r} \
          &= \ 0, \\
          \limsup_{r \to 0} \sup_{z \in B_R(0)} \frac{|h_r(z)|}{\log(1/r)} \
          &\leq \ 2.
        \end{align*}
        Writing $Z_{\epsilon} \hat{h}_{\epsilon}^{*}(z)$ in polar coordinates, we get for all $z \in B_R(0)$,
        \begin{align*}
          \left|Z_{\epsilon} \hat{h}_{\epsilon}^{*}(z) \right| \
          &\leq \ \frac{2}{\epsilon^2} \int\limits_{0}^{\infty} r |h_r(z)| \psi_{\epsilon}(r) e^{-r^2/\epsilon^2} \, \D r \\
          &\leq \ \frac{2(2 + \eta)}{\epsilon^2} \int\limits_{0}^{r_1} r \log(1/r) e^{-r^2/\epsilon^2} \, \D r + \frac{2 C}{\epsilon^2} \int\limits_{r_1}^{r_2} r e^{-r^2/\epsilon^2} \, \D r + \frac{2}{\epsilon^2} \int\limits_{r_2}^{\infty} r \log r e^{-r^2/\epsilon^2} \, \D r \\
          &\leq \ (2 + \eta) \log(1/\epsilon) + C
        \end{align*}
        for random variables $0 < r_1 < 1 < r_2 < \infty$ and $C > 0$.
        A similar calculation shows $|h_{\epsilon}^{*}(z)| \leq (2 + \eta) \log(1/\epsilon) + C$.
        Since $Z_{\epsilon} \leq \int_{\CC} p_{\epsilon^2/2}(w) \, \D w = 1$ and 
        \begin{align*}
          1 - Z_{\epsilon} \
          &= \ \frac{2}{\epsilon^2} \int\limits_{0}^{\infty} r (1 - \psi_{\epsilon}(r)) e^{-r^2/\epsilon^2} \, \D r \
          \leq \ \frac{2}{\epsilon^2} \int\limits_{\frepsilon/2}^{\infty} r e^{-r^2/\epsilon^2} \, \D r \
          = \ e^{- [\log \epsilon^{-1}]^2/4},
        \end{align*}
        it follows that 
        \begin{align*}
          \left|\hat{h}_{\epsilon}^{*}(z) \right| \
          &\leq \ \frac{1}{1 - e^{-[\log \epsilon^{-1}]^{2}/4}} (2 + \eta) \log(1/\epsilon) + \frac{C}{1 - e^{-[\log \epsilon^{-1}]^{2}/4}}.
        \end{align*}
        Equation \eqref{eq:TruncatedHeatKernelMollificationBound} now follows for all $\epsilon$ small enough that $1 - e^{-[\log \epsilon^{-1}]^{2}/4} > (1 + \eta)^{-1}$.
  
        Now to prove \eqref{eq:LengthsOfSmallSegments}, it will suffice to prove that there exists $a_{*}$ such that for each $a \geq a_{*}$ and each $R > 0$, almost surely
        \begin{align*}
          \lim_{n \to \infty} \sup_{\substack{z,w \in B_R(0) \\ |z - w| \leq 4 n^{-a(1 - \zeta)}}} \LFPP[n^{-a}](z,w) \vee \locLFPP[n^{-a}](z,w) \
          &= \ 0.
        \end{align*}
        Equation \eqref{eq:TruncatedHeatKernelMollificationBound} and \cite[Theorem 1.11]{UpToConstants} imply there exist $b > 0$ and random $C > 0$ that for each $\epsilon$ sufficiently small and for all $z,w \in B_R(0)$,
        \begin{align*}
          \LFPP(z,w) \vee \locLFPP(z,w) \
          &\leq \ C |z - w| \epsilon^{-\xi(1 + \eta)(2 + \eta)} \epsilon^{\xi Q - 1} \left(\log \epsilon^{-1} \right)^b
        \end{align*}
        Choose $\eta, \zeta' \in (0,1)$ such that $\zeta' < \zeta$ and $\xi(Q - (1 + \eta)(2 + \eta)) - \zeta' > 0$ (here, we are using the fact that $Q > 2$), so then almost surely, 
        \begin{align*}
          \sup_{\substack{z,w \in B_R(0) \\ |z - w| \leq 4 \epsilon^{1 - \zeta'}}} \LFPP(z,w) \vee \locLFPP(z,w) \
          &\leq \ C \epsilon^{\xi (Q - (1 + \eta)(2 + \eta)) - \zeta'} \left(\log \epsilon^{-1} \right)^b \
          \to \ 0.
        \end{align*}
  
        Finally, note that
        \begin{align}
          &\sup_{\substack{z,w \in B_R(0) \\ |z - w| \leq 4 \epsilon^{1 - \zeta}}} \LFPP(z,w) \vee \locLFPP(z,w) \nonumber \\
          &\qquad \leq \ 2 \sup_{\substack{z,w \in B_{R + 1}(0) \\ |z - w| \leq 4 \epsilon^{1 - \zeta'}}} \LFPP(z,w) \vee \locLFPP(z,w) \label{eq:ErrorTermsGoTo0} \\
          &\qquad \qquad + \sup_{\substack{z,w \in B_R(0) \\ 4 \epsilon^{1 - \zeta'} < |z - w| \leq 4 \epsilon^{1 - \zeta}}} \LFPP\left(B_{\epsilon^{1 - \zeta'}}(z), B_{\epsilon^{1 - \zeta'}}(w) \right) \vee \locLFPP\left(B_{\epsilon^{1 - \zeta'}}(z), B_{\epsilon^{1 - \zeta'}}(w) \right). \nonumber
        \end{align}
        The first term on the right-hand side of \eqref{eq:ErrorTermsGoTo0} converges to $0$ almost surely by the preceding argument.
        For the second term, use Proposition \ref{prop:UpToConstants} to find $C_0, \beta > 0$ depending only on $\zeta'$ such that with probability $1 - O(\epsilon^{\beta})$ as $\epsilon \to 0$, 
        \begin{align*}
          \LFPP\left(B_{\epsilon^{1 - \zeta'}}(z), B_{\epsilon^{1 - \zeta'}}(w) \right) \
          &\leq \ C_0 D_h\left(z,w; B_{R+1}(0) \right) \ 
          \forall z,w \in B_{R+1}(0), \\
          \locLFPP\left(B_{\epsilon^{1 - \zeta'}}(z), B_{\epsilon^{1 - \zeta'}}(w) \right) \
          &\leq \ C_0 D_h\left(z,w; B_{R+1}(0) \right) \ 
          \forall z,w \in B_{R+1}(0).
        \end{align*}
        Choose $a_{*}$ large enough that $a_{*} \beta > 1$, so then if $a \geq a_{*}$, the Borel-Cantelli Lemma implies that almost surely, for all $n$ sufficiently large,
        \begin{align*}
          \locLFPP[n^{-a}]\left(B_{n^{-a(1 - \zeta')}}(z), B_{n^{-a(1 - \zeta')}}(w) \right) \
          &\leq \ C_0 D_h\left(z,w; B_{R+1}(0) \right) \ \forall z,w \in B_{R+1}(0), \\
          \LFPP[n^{-a}]\left(B_{n^{-a(1 - \zeta')}}(z), B_{n^{-a(1 - \zeta')}}(w) \right) \
          &\leq \ C_0 D_h\left(z,w; B_{R+1}(0) \right) \ \forall z,w \in B_{R+1}(0),
        \end{align*}
        and hence that the second term on the right-hand side of \eqref{eq:ErrorTermsGoTo0} with $\epsilon = n^{-a}$ is bounded above by
        \begin{align*}
          \sup_{\substack{z,w \in B_{R}(0) \\ |z - w| \leq 4 n^{-a(1 - \zeta)}}} C_0 D_h\left(z,w; B_{R+1}(0) \right).
        \end{align*}
        By uniform continuity of $D_h(\cdot, \cdot; B_{R+1}(0))$ on $\overline{B_{R}(0)} \times \overline{B_{R}(0)}$, this converges to $0$ almost surely as $n \to \infty$.
      \end{proof}
    \end{lemma}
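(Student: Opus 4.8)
The plan is to reduce the statement to a pointwise upper bound on the mollified fields $h_\epsilon^*$ and $\hat h_\epsilon^*$ together with the continuity of the limiting metric $D_h$, the point being that the naive estimate --- bound a straight segment using the pointwise field bound --- is only good enough when $1-\zeta$ is close to $1$, and one needs a further device to handle $\zeta$ close to $1$.

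First I would fix $R>0$ and a small $\eta>0$ and show that, almost surely, for all sufficiently small $\epsilon$ and all $z\in B_R(0)$,
\[
  \left|h_\epsilon^*(z)\right| \vee \left|\hat h_\epsilon^*(z)\right| \ \le \ (1+\eta)(2+\eta)\log\epsilon^{-1} + C
\]
for some random $C>0$. This follows by writing $h_\epsilon^*(z)$ and $Z_\epsilon\hat h_\epsilon^*(z)$ in polar coordinates around $z$ and splitting the radial integral at two random radii $0<r_1<1<r_2<\infty$: the near part is controlled using $\limsup_{r\to0}\sup_{z\in B_R(0)}|h_r(z)|/\log(1/r)\le 2$, the far part using $\lim_{r\to\infty}\sup_{z\in B_R(0)}|h_r(z)|/\log r=0$ (both from \cite{WeakLQGMetrics} and \cite{ThickPoints}), and the middle part is $O(1)$; for $\hat h_\epsilon^*$ one also uses the elementary bound $1-Z_\epsilon\le e^{-(\log\epsilon^{-1})^2/4}$ to absorb the normalization.

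Next, bounding the straight segment from $z$ to $w$ by this pointwise estimate and invoking the two-sided bound $\fa_\epsilon \ge C^{-1}\epsilon^{1-\xi Q}(\log\epsilon^{-1})^{-b}$ from Theorem \ref{thm:LFPPScalingConstants} (equivalently \cite[Theorem 1.11]{UpToConstants}) gives, almost surely, for small $\epsilon$ and $z,w\in B_R(0)$,
\[
  \LFPP(z,w) \vee \locLFPP(z,w) \ \le \ C\,|z-w|\,\epsilon^{\xi Q - 1 - \xi(1+\eta)(2+\eta)}\left(\log\epsilon^{-1}\right)^b.
\]
Since $Q>2$, I can choose $\eta$ small and $\zeta'\in(0,\zeta)$ small so that $\xi\big(Q-(1+\eta)(2+\eta)\big)-\zeta'>0$; then for $|z-w|\le 4\epsilon^{1-\zeta'}$ the right-hand side is $\le C\epsilon^{\xi(Q-(1+\eta)(2+\eta))-\zeta'}(\log\epsilon^{-1})^b\to0$ as $\epsilon\to0$. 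This already proves the claim with $\zeta'$ in place of $\zeta$, and in fact along the full continuum index, with no restriction on $a$.

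To get the general $\zeta$, I would decompose a near-optimal path from $z$ to $w$ (with $|z-w|\le 4\epsilon^{1-\zeta}$) into a segment from $z$ to $\partial B_{\epsilon^{1-\zeta'}}(z)$, a crossing from $B_{\epsilon^{1-\zeta'}}(z)$ to $B_{\epsilon^{1-\zeta'}}(w)$, and a segment from $\partial B_{\epsilon^{1-\zeta'}}(w)$ to $w$. The first and last pieces are Euclidean segments of length $\le\epsilon^{1-\zeta'}$ and are handled by the previous paragraph (with $B_{R+1}(0)$ in place of $B_R(0)$); the middle piece is the ball-to-ball distance $\LFPP(B_{\epsilon^{1-\zeta'}}(z),B_{\epsilon^{1-\zeta'}}(w))$ (and the $\locLFPP$ analogue), which by Proposition \ref{prop:UpToConstants} applied with exponent $\zeta'$ is at most $C_0\,D_h(z,w;B_{R+1}(0))$ on an event of probability $1-O_\epsilon(\epsilon^\beta)$. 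Choosing $a_*=a_*(\zeta)$ with $a_*\beta>1$, Borel--Cantelli gives that for $a\ge a_*$ this ball-to-ball bound holds along $\epsilon=n^{-a}$ for all large $n$, so the middle term is at most $\sup_{z,w\in B_R(0),\,|z-w|\le 4n^{-a(1-\zeta)}}C_0\,D_h(z,w;B_{R+1}(0))$, which tends to $0$ by uniform continuity of $D_h(\cdot,\cdot;B_{R+1}(0))$ on $\overline{B_R(0)}\times\overline{B_R(0)}$. Taking $a_*$ to be the maximum of the values obtained for $B_R(0)$ over a countable exhaustion of $\CC$ finishes the proof.

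The main obstacle is precisely the gap between what the naive pointwise-field estimate gives and what is claimed: the exponent $\xi\big(Q-(1+\eta)(2+\eta)\big)-(1-\zeta)$ one would need positive can be negative when $\zeta$ is close to $1$, since $\xi(Q-4)$ may be arbitrarily small, so the straight-segment bound fails outright in that regime. The resolution --- inserting intermediate balls of radius $\epsilon^{1-\zeta'}$ with $\zeta'$ small and using Proposition \ref{prop:UpToConstants} to replace the ball-to-ball LFPP distances by the continuous limiting metric $D_h$ --- is the crux, and it is this step that forces passage to the polynomial subsequence $\epsilon=n^{-a}$ with $a$ large (via Borel--Cantelli on the $O_\epsilon(\epsilon^\beta)$ exceptional probability).
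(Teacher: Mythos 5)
Your proposal is correct and follows essentially the same route as the paper's proof: the same pointwise bound on $h_\epsilon^{*}$ and $\hat h_\epsilon^{*}$ via circle-average estimates, the same straight-segment estimate using the $\fa_\epsilon$ bound with a small auxiliary exponent $\zeta'$ (exploiting $Q>2$), and the same reduction of the remaining ball-to-ball distances to $D_h$ via Proposition \ref{prop:UpToConstants}, Borel--Cantelli along $\epsilon=n^{-a}$ with $a\beta>1$, and uniform continuity of $D_h$. The only cosmetic difference is that the paper cites \cite[Theorem 1.11]{UpToConstants} directly for the $\fa_\epsilon$ bound; your appeal to Theorem \ref{thm:LFPPScalingConstants} is not circular (its proof does not use this lemma), and your final remark about taking a maximum over an exhaustion is unnecessary since $\beta$, and hence $a_*$, depends only on $\zeta'$ and not on $R$.
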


    For our final lemma, it is an easy to see that for a fixed compact set $K \subset \RR$, $P\{K^2 \subset \bigcup_{R>0} \hyperlink{Vhat}{\hat{V}_{\epsilon}(R)}\} = 1$ for all $\epsilon$.
    When we send $R \to \infty$ in the proof of Theorem \ref{thm:AlmostSureConvergence}, we would like for this event to not depend on $\epsilon$.
    In analogy with the definitions of $\hyperlink{V}{V(R)}$ and $\hyperlink{Vhat}{\hat{V}_{\epsilon}(R)}$, define
    \begin{align*}
      \hypertarget{Veps}{V_{\epsilon}(R)} \
      &\coloneqq \ \left\{(z,w) \in \CC^2 : \exists D_h^{\epsilon}\text{-geodesic from } z \text{ to } w \text{ contained in } B_R(0) \right\}.
    \end{align*}

    \begin{lemma}
      \label{lemma:NoInternalMetrics}
      Fix a compact set $K \subset \CC$.
      Almost surely, there exists $R_0 > 0$ and $\epsilon_0 \in (0,1)$ such that for every $R \geq R_0$ and every $\epsilon \in (0, \epsilon_0)$, $K^2 \subset \hyperlink{V}{V(R)} \cap \hyperlink{Veps}{V_{\epsilon}(R)} \cap \hyperlink{Vhat}{\hat{V}_{\epsilon}(R)}$.

      \begin{proof}
        The result for $\hyperlink{V}{V(R)}$ holds by \cite[Lemma 3.8]{WeakLQGMetrics}, so we just need to prove the result for $\hyperlink{Veps}{V_{\epsilon}(R)}$ and $\hyperlink{Vhat}{\hat{V}_{\epsilon}(R)}$.
        The idea is to use Borel-Cantelli to show that almost surely, \eqref{eq:UpToConstantsLocDh}, \eqref{eq:UpToConstantsDhLoc}, \eqref{eq:UpToConstantsLFPPDh}, and \eqref{eq:UpToConstantsDhLFPP} hold with $\epsilon = n^{-a}$ for all $n$ large enough, then use a continuity estimate to show that the same holds for all $\epsilon$ sufficiently small.
        Then the result follows from the fact that $K^2 \subset \hyperlink{V}{V(R)}$ for all $R$ sufficiently large.

        Fix $p, \zeta \in (0,1)$.
        Choose $\beta, C_0 > 0$ according to Proposition \ref{prop:UpToConstants}.
        Choose $R > 1$ such that $\overline{B_1(K)} \subset B_{R-1}(0)$, and the event $A$ that 
        \begin{align}
          \sup_{u,v \in K} D_h(u,v) \
          &< \ C_0^{-2} D_h\left(\overline{B_1(K)}, \partial B_{R-1}(0) \right) - C_0^{-1}
          \label{eq:NoInternalMetricsEventA}
        \end{align}
        occurs with probability at least $p$.
        Choose $a > 0$ according to Lemma \ref{lemma:LengthsOfSmallSegments} such that $a \beta > 1$.
        Let $H_{\epsilon}$ be the event that \eqref{eq:UpToConstantsLocDh}, \eqref{eq:UpToConstantsDhLoc}, \eqref{eq:UpToConstantsLFPPDh}, and \eqref{eq:UpToConstantsDhLFPP} all hold with $B_{R+1}(0)$ in place of $U$.
        Then on $A \cap \bigcup_{k=1}^{\infty} \bigcap_{n=k}^{\infty} H_{n^{-a}}$, for all $n$ sufficiently large,
        \begin{align*}
          \sup_{u,v \in K} \locLFPP[n^{-a}](u,v)
          &\leq \sup_{u,v \in K} \locLFPP[n^{-a}]\left(B_{4n^{-a(1 - \zeta)}}(u), B_{4n^{-a(1 - \zeta)}}(v) \right) \\
          &\qquad \qquad + 2 \sup_{u \in K} \sup_{v \in \partial B_{4n^{-a(1 - \zeta)}}(u)} \locLFPP[n^{-a}](u, v) \\
          &\leq C_0 \sup_{u,v \in K} D_h\left(u,v \right) + 2 \sup_{u \in K} \sup_{v \in \partial B_{4 n^{-a(1 - \zeta)}}(u)} \locLFPP[n^{-a}](u,v) & (\text{by \eqref{eq:UpToConstantsLocDh}}) \\
          &\leq C_0^{-1} D_h\left(\overline{B_1(K)}, \partial B_{R-1}(0) \right) & (\text{by \eqref{eq:NoInternalMetricsEventA}}) \\
          &\qquad \qquad - 1 + 2 \sup_{u \in K} \sup_{v \in \partial B_{4 n^{-a(1 - \zeta)}}(u)} \locLFPP[n^{-a}](u,v) \\
          &\leq C_0^{-1} D_h\left(B_{n^{-a(1 - \zeta)}}(K), B_{n^{-a(1 - \zeta)}}(\partial B_R(0)) \right) \\
          &\qquad \qquad - 1 + 2 \sup_{u \in K} \sup_{v \in \partial B_{4 n^{-a(1 - \zeta)}}(u)} \locLFPP[n^{-a}](u,v) \\
          &\leq \locLFPP[n^{-a}]\left(K, \partial B_R(0) \right) & (\text{by \eqref{eq:UpToConstantsDhLoc}}) \\
          &\qquad \qquad - 1 + 2 \sup_{u \in K} \sup_{v \in \partial B_{4 n^{-a(1 - \zeta)}}(u)} \locLFPP[n^{-a}](u,v),
        \end{align*}
        and likewise with $D_h^{\epsilon}$ in place of $\hat{D}_h^{\epsilon}$.
        By Lemma \ref{lemma:LengthsOfSmallSegments}, the event $A'$ that the last term converges to $0$ occurs with probability $1$.
        So on $A \cap A' \cap \bigcup_{k=1}^{\infty} \bigcap_{n=k}^{\infty} H_{n^{-a}}$, for all $n$ sufficiently large,
        \begin{align*}
          \sup_{u,v \in K} \locLFPP[n^{-a}](u,v) \
          &< \ \locLFPP[n^{-a}]\left(K, \partial B_R(0) \right) - \frac{1}{2}, 
        \end{align*}
        and likewise with $D_h^{\epsilon}$ in place of $\hat{D}_h^{\epsilon}$.
        In particular, $K^2 \subset V_{n^{-a}}(R) \cap \hyperlink{Vhat}{\hat{V}_{n^{-a}}(R)}$, so by Lemma \ref{lemma:ContinuityHeatKernelMollification}, almost surely on $A \cap A' \cap \bigcup_{k=1}^{\infty} \bigcap_{n=k}^{\infty} H_{n^{-a}}$, for all $n$ sufficiently large and all $\epsilon \in [(n+1)^{-a}, n^{-a}]$,
        \begin{align*}
          \sup_{u,v \in K} \locLFPP(u,v) \
          &\leq \ e^{o_{\epsilon}(1)} \sup_{u,v \in K} \locLFPP[n^{-a}](u,v), \\
          \locLFPP\left(K, \partial B_R(0) \right) \
          &\geq \ e^{-o_{\epsilon}(1)} \locLFPP[n^{-a}](K, \partial B_R(0)) 
        \end{align*}
        for some $R$-dependent random error $o_{\epsilon}(1)$ tending to $0$ almost surely as $\epsilon \to 0$.
        The same is true with $D_h^{\epsilon}$ in place of $\hat{D}_h^{\epsilon}$.
        It follows that $K^2 \subset \hyperlink{Veps}{V_{\epsilon}(R)} \cap \hyperlink{Vhat}{\hat{V}_{\epsilon}(R)}$ for all $\epsilon$ sufficiently small.
        Sending $p \to 1$ proves the claim.
      \end{proof}
    \end{lemma}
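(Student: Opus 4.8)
The plan is to reduce the assertions about $\hyperlink{Veps}{V_{\epsilon}(R)}$ and $\hyperlink{Vhat}{\hat{V}_{\epsilon}(R)}$ to the known fact that $K^2 \subset \hyperlink{V}{V(R)}$ for all $R$ sufficiently large (this is \cite[Lemma 3.8]{WeakLQGMetrics}), the bridge being the approximate Lipschitz equivalence of Proposition \ref{prop:UpToConstants}. The key observation is that a pair $(z,w)$ lies in $\hyperlink{Veps}{V_{\epsilon}(R)}$ as soon as $\LFPP(z,w) < \LFPP(K, \partial B_R(0))$ (multiplying $D_h^{\epsilon}$ by the positive constant $\fa_{\epsilon}^{-1}$ does not change its geodesics), since then no $D_h^{\epsilon}$-geodesic between $z$ and $w$ can reach $\partial B_R(0)$; the same reasoning applies to $\hat{D}_h^{\epsilon}$. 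So it suffices to prove that, almost surely, for all $R$ large and all $\epsilon$ small, $\sup_{u,v\in K}\locLFPP(u,v) < \locLFPP(K, \partial B_R(0))$, and likewise with $\LFPP$ in place of $\locLFPP$.

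First I would fix $p \in (0,1)$ and, using that $D_h(\overline{B_1(K)}, \partial B_{R-1}(0)) \to \infty$ almost surely as $R \to \infty$, choose $R$ large enough that with probability at least $p$ one has $\sup_{u,v\in K} D_h(u,v) < C_0^{-2} D_h(\overline{B_1(K)}, \partial B_{R-1}(0)) - C_0^{-1}$, where $C_0$ is the constant of Proposition \ref{prop:UpToConstants} (for some fixed small $\zeta$). On this event I would chain the inequalities \eqref{eq:UpToConstantsLocDh}--\eqref{eq:UpToConstantsDhLFPP} applied with $U = B_{R+1}(0)$: \eqref{eq:UpToConstantsLocDh} bounds $\locLFPP$ between small balls around $u$ and $v$ by $C_0 \sup_{u,v\in K}D_h(u,v)$, while \eqref{eq:UpToConstantsDhLoc} bounds $D_h$ between the corresponding balls around $K$ and $\partial B_R(0)$ below by $C_0^{-1}\locLFPP(K,\partial B_R(0))$ (using $B_{\epsilon^{1-\zeta}}(K)\subset\overline{B_1(K)}$ for small $\epsilon$). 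Inserting the choice of $R$ yields $\sup_{u,v\in K}\locLFPP(u,v) \le \locLFPP(K,\partial B_R(0)) - 1 + (\mathrm{err})$, where $(\mathrm{err})$ is a sum of $\locLFPP$-lengths of line segments of Euclidean length $\le 4\epsilon^{1-\zeta}$ attached to points of $K$. By Lemma \ref{lemma:LengthsOfSmallSegments}, for the exponent $a$ large enough the terms $(\mathrm{err})$ with $\epsilon = n^{-a}$ tend to $0$ almost surely, so the $-1$ eventually dominates; taking in addition $a\beta > 1$ and applying Borel--Cantelli, the Proposition \ref{prop:UpToConstants} event holds for all large $n$ almost surely. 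Hence $K^2 \subset V_{n^{-a}}(R) \cap \hyperlink{Vhat}{\hat{V}_{n^{-a}}(R)}$ for all large $n$, on an event of probability $\ge p$.

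To upgrade from the sequence $n^{-a}$ to the continuum index, I would interpolate using the continuity estimates: Lemma \ref{lemma:ContinuityHeatKernelMollification} gives $\sup_{z\in K}|\hat h^{*}_{\epsilon}(z) - \hat h^{*}_{n^{-a}}(z)| \to 0$ (and the same for $h^{*}_{\epsilon}$) uniformly for $\epsilon\in[(n+1)^{-a}, n^{-a}]$, while Lemma \ref{lemma:ScalingConstantsContinuity} gives $\fa_{\epsilon}/\fa_{n^{-a}} \to 1$; via Weyl scaling these combine to $\locLFPP = e^{o_{\epsilon}(1)}\locLFPP[n^{-a}]$ uniformly over distances in $K$ and to $\partial B_R(0)$, with the error tending to $0$ almost surely, so the strict inequality survives for all small $\epsilon$. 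Combining with $K^2 \subset \hyperlink{V}{V(R)}$ for $R$ large, the claim holds on this probability-$\ge p$ event, and letting $p \uparrow 1$ concludes. I expect the main difficulty to be controlling the ball-to-point error terms $(\mathrm{err})$ when $1-\zeta$ is close to $0$: the naive bound on the $\LFPP$- or $\locLFPP$-length of a short segment is far too weak, which is precisely why Lemma \ref{lemma:LengthsOfSmallSegments}, with its delicate threshold $a_{*}$, is needed. A secondary point, and the reason to work with $D_h^{\epsilon}$-geodesics (the sets $V_{\epsilon}(R)$) rather than internal metrics throughout, is to make the final event on which $K^2 \subset \hyperlink{Veps}{V_{\epsilon}(R)} \cap \hyperlink{Vhat}{\hat{V}_{\epsilon}(R)}$ independent of $\epsilon$, so that one may safely send $R \to \infty$ afterwards.
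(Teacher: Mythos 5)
Your proposal is correct and follows essentially the same route as the paper's proof: reduce to $K^2 \subset \hyperlink{V}{V(R)}$, chain the four inequalities of Proposition \ref{prop:UpToConstants} on the event \eqref{eq:NoInternalMetricsEventA} of probability $\geq p$, absorb the ball-to-point errors via Lemma \ref{lemma:LengthsOfSmallSegments}, apply Borel--Cantelli along $n^{-a}$ with $a\beta > 1$, interpolate to the continuum index via Lemma \ref{lemma:ContinuityHeatKernelMollification}, and send $p \to 1$. You also correctly identified the two genuinely subtle points (the need for Lemma \ref{lemma:LengthsOfSmallSegments} when $1-\zeta$ is small, and working with geodesic containment to make the final event $\epsilon$-free before sending $R \to \infty$).
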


    \begin{proof}[Proof of Theorem \ref{thm:AlmostSureConvergence}]
      Fix $\delta \in (0,1)$, choose $\zeta$ and $\beta$ according to Proposition \ref{prop:LipschitzConstant1}, and let $H_{\epsilon}$ be the event that \eqref{eq:LipschitzConstant1LocalDh} and \eqref{eq:LipschitzConstant1DhLocal} hold.
      Choose $a > a_{*} \vee 1/\beta$, where $a_{*}(\zeta)$ is as in Lemma \ref{lemma:LengthsOfSmallSegments}.

      Now fix a compact set $K \subset \CC$ and a parameter $R > 0$ which will eventually be sent to $\infty$.
      First note that on $\bigcup_{k=1}^{\infty} \bigcap_{n=k}^{\infty} H_{n^{-a}}$, 
      \begin{align*}
        \limsup_{n \to \infty} \sup_{(u,v) \in K^2 \cap \hyperlink{V}{V(R)}} \locLFPP[n^{-a}](u,v)
        &\leq 2 \limsup_{n \to \infty} \sup_{z \in K} \sup_{\substack{w \in \CC \\ |w - z| \leq 4 n^{-a(1 - \zeta)}}} \locLFPP[n^{-a}](z,w) \\
        &\qquad \qquad + (1 + \delta) \sup_{u,v \in K} D_h(u,v).
      \end{align*}
      The first term on the right-hand side is $0$ by Lemma \ref{lemma:LengthsOfSmallSegments}, so 
      \begin{align}
        \limsup_{n \to \infty} \sup_{(u,v) \in K^2 \cap \hyperlink{V}{V(R)}} \locLFPP[n^{-a}](u,v) \
        &\leq \ (1 + \delta) \sup_{u,v \in K} D_h(u,v) \
        < \ \infty \text{ a.s.}
        \label{eq:LimsupAlongDiscreteIndexFinite}
      \end{align}
      Now for each $\epsilon \in (0,1)$, write $n = n(\epsilon)$ for the integer with $\epsilon \in \hlint{(n+1)^{-a}}{n^{-a}}$.
      Lemma \ref{lemma:ContinuityHeatKernelMollification} implies that for all $u,v \in K$ with $u \neq v$ and $(u,v) \in \hyperlink{Vhat}{\hat{V}_{\epsilon}(R)} \cap \hyperlink{Vhat}{\hat{V}_{n^{-a}}(R)}$,
      \begin{align}
        \frac{\fa_{\epsilon}}{\fa_{n^{-a}}} e^{-o_{\epsilon}(1)} \
        &\leq \ \frac{\locLFPP(u,v)}{\locLFPP[n^{-a}](u,v)} \
        \leq \ \frac{\fa_{\epsilon}}{\fa_{n^{-a}}} e^{o_{\epsilon}(1)},
        \label{eq:ContinuityLocalizedLFPP}
      \end{align}
      where the $o_{\epsilon}(1)$ converges to $0$ almost surely as $\epsilon \to 0$.
      
      Now assume $\bigcup_{k=1}^{\infty} \bigcap_{m=k}^{\infty} H_{m^{-a}}$ occurs, so 
      \begin{align*}
        &\limsup_{\epsilon \to 0} \sup_{(u,v) \in K^2 \cap \hyperlink{V}{V(R)} \cap \hyperlink{Vhat}{\hat{V}_{\epsilon}(R)} \cap \hyperlink{Vhat}{\hat{V}_{n^{-a}}(R)}} \left[\locLFPP(u,v) - D_h(u,v) \right] \\
        &\qquad \qquad \leq \ \limsup_{\epsilon \to 0} \sup_{(u,v) \in K^2 \cap \hyperlink{V}{V(R)} \cap \hyperlink{Vhat}{\hat{V}_{\epsilon}(R)} \cap \hyperlink{Vhat}{\hat{V}_{n^{-a}}(R)}} \left[\locLFPP(u,v) - \locLFPP[n^{-a}](u,v) \right] \\
        &\qquad \qquad \qquad \qquad + 2 \limsup_{\epsilon \to 0} \sup_{z \in K} \sup_{\substack{w \in \CC \\ |w - z| \leq  \in 4 n^{-a(1 - \zeta)}}} \locLFPP[n^{-a}](u,v) + \delta \sup_{u,v \in K} D_h(u,v).
      \end{align*}
      The second term on the right-hand side is $0$ by Lemma \ref{lemma:LengthsOfSmallSegments}.
      To see that the first is $0$ almost surely, combine \eqref{eq:LimsupAlongDiscreteIndexFinite}, \eqref{eq:ContinuityLocalizedLFPP}, and Lemma \ref{lemma:ScalingConstantsContinuity} to see that
      \begin{align*}
        &\limsup_{\epsilon \to 0} \sup_{(u,v) \in K^2 \cap \hyperlink{V}{V(R)} \cap \hyperlink{Vhat}{\hat{V}_{\epsilon}(R)} \cap \hyperlink{Vhat}{\hat{V}_{n^{-a}}(R)}} \left[\locLFPP(u,v) - \locLFPP[n^{-a}](u,v) \right] \\
        &\qquad \leq \ \lim_{\epsilon \to 0} \sup_{\substack{(u,v) \in K^2 \cap \hyperlink{Vhat}{\hat{V}_{\epsilon}(R)} \cap \hyperlink{Vhat}{\hat{V}_{n^{-a}}(R)} \\ u \neq v}} \left|\frac{\locLFPP(u,v)}{\locLFPP[n^{-a}](u,v)} - 1 \right| (1 + \delta) \sup_{u,v \in K} D_h(u,v) \
        = \ 0.
      \end{align*}
      Thus, we get almost surely on $\bigcup_{k=1}^{\infty} \bigcap_{m=k}^{\infty} H_{m^{-a}}$,
      \begin{align*}
        \limsup_{\epsilon \to 0} \sup_{(u,v) \in K^2 \cap \hyperlink{V}{V(R)} \cap \hyperlink{Vhat}{\hat{V}_{\epsilon}(R)} \cap \hyperlink{Vhat}{\hat{V}_{n^{-a}}(R)}} \left[\locLFPP(u,v) - D_h(u,v) \right] \
        &\leq \ \delta \sup_{u,v \in K} D_h(u,v).
      \end{align*}
      By Lemma \ref{lemma:NoInternalMetrics}, we can send $R \to \infty$ and then $\delta \to 0$ to get 
      \begin{align*}
        \limsup_{\epsilon \to 0} \sup_{u,v \in K} \left[\locLFPP(u,v) - D_h(u,v) \right] \
        &\leq \ 0 \text{ a.s.}
      \end{align*}
      The lower bound that
      \begin{align*}
        \liminf_{\epsilon \to 0} \inf_{u,v \in K} \left[\locLFPP(u,v) - D_h(u,v) \right] \
        &\geq \ 0 \text{ a.s.}
      \end{align*}
      is done analogously.

      We have now shown that $\locLFPP \to D_h$ almost surely, and it remains to prove that $\LFPP \to D_h$ almost surely.
      For this, choose an increasing collection of connected bounded open sets $(U_k)_{k=1}^{\infty}$ such that $\overline{U}_k \subset U_{k+1}$ for all $k$ and $\CC = \bigcup_{k=1}^{\infty} U_k$.
      By Lemma \ref{lemma:PropertiesOfLocalizedFieldAndLFPP}, the event
      \begin{align*}
        A \
        &\coloneqq \ \bigcap_{k=1}^{\infty} \left\{\lim_{\epsilon \to 0} \frac{\hat{D}_h^{\epsilon}(z,w; U_k)}{D_h^{\epsilon}(z,w;U_k)} = 1 \text{ uniformly over all } z,w \in U_k \text{ with } z \neq w \right\}
      \end{align*}
      occurs with probability $1$.
      Now fix a compact set $K \subset \CC$, so
      \begin{align*}
        &\limsup_{\epsilon \to 0} \sup_{u,v \in K} \left|\LFPP(u,v) - D_h(u,v) \right| \\
        &\qquad \qquad \leq \ \limsup_{\epsilon \to 0} \sup_{\substack{u,v \in K \\ u \neq v}} \left|\frac{D_h^{\epsilon}(u,v)}{\hat{D}_h^{\epsilon}(u,v)} - 1 \right| \limsup_{\epsilon \to 0} \sup_{u,v \in K} \locLFPP(u,v) \\
        &\qquad \qquad \qquad \qquad + \limsup_{\epsilon \to 0} \sup_{u,v \in K} \left|\locLFPP(u,v) - D_h(u,v)\right|.
      \end{align*}
      The second term is $0$ almost surely, while $\limsup_{\epsilon \to 0} \sup_{u,v \in K} \locLFPP(u,v) = \sup_{u,v \in K} D_h(u,v)$ is almost surely finite.
      By Lemma \ref{lemma:NoInternalMetrics}, almost surely there exists $R > 0$ and $\epsilon_0 > 0$ such that $K^2 \subset \hyperlink{Veps}{V_{\epsilon}(R)} \cap \hyperlink{Vhat}{\hat{V}_{\epsilon}(R)}$ for all $\epsilon \in (0, \epsilon_0)$. 
      It follows that with probability one,
      \begin{align*}
        \limsup_{\epsilon \to 0} \sup_{\substack{u,v \in K \\ u \neq v}} \left|\frac{D_h^{\epsilon}(u,v)}{\hat{D}_h^{\epsilon}(u,v)} - 1 \right| \
        &= \ 0.
      \end{align*}
    \end{proof}

    \subsection{LFPP Scaling Constants}
      We now turn to the proof of Theorem \ref{thm:LFPPScalingConstants}.

      \begin{proof}[Proof of Theorem \ref{thm:LFPPScalingConstants}]
        Fix $\delta \in (0,1)$ and we will prove the claim for $b \coloneqq \log(1 + \delta)$.
        Since $\epsilon \mapsto \fa_{\epsilon}$ is bounded above and below by positive $\xi$-dependent constants on any given compact subinterval of $(0, \infty)$, it will suffice to find $\tilde{\epsilon} \in (0,1)$ and $C = C(\xi, b) > 0$ such that \eqref{eq:LFPPScalingConstantsGoal} holds for all $\epsilon \in (0, \tilde{\epsilon})$.
        The idea of the proof is to choose $\tilde{\epsilon}$ to be $\epsilon_N$ from Lemma \ref{lemma:GoodRatios}, so then for each $\epsilon \in (0, \tilde{\epsilon})$, there are many values of $r$ for which 
        \begin{align}
          \left(1 + \delta \right)^{-1} r^{\xi Q - 1} \
          &\leq \ \frac{\fa_{\epsilon/r}}{\fa_{\epsilon}} \
          \leq \ \left(1 + \delta \right) r^{\xi Q - 1}.
          \label{eq:LFPPScalingConstantsStrategy}
        \end{align}
        Starting with an arbitrary $\epsilon_0 \in (0, \tilde{\epsilon})$, we will choose $r_1$. such that \eqref{eq:LFPPScalingConstantsStrategy} holds with $\epsilon = \epsilon_0$ and $r = r_1$.
        Then if $\epsilon_1 \coloneqq \epsilon_0/r_1$ is still smaller than $\tilde{\epsilon}$, we will choose $r_2$ so that \eqref{eq:LFPPScalingConstantsStrategy} holds with $\epsilon = \epsilon_1$ and $r = r_2$.
        Repeating in this manner, then multiplying the inequalities \eqref{eq:LFPPScalingConstantsStrategy} with $\epsilon = \epsilon_k$ and $r = r_{k+1}$ together and rearranging will yield the theorem.

        For $n \geq 1$, let $\zeta_{+}(n) \coloneqq 1 - \frac{1}{3^{n+1}}$ and $\zeta(n) \coloneqq 1 - \frac{1}{3^{n}}$ so that $1 - \zeta_{+}(n) < \g (1 - \zeta(n))$ for all $n \geq 1$.
        Let, say, $\zeta_{+}(0) \coloneqq \frac{1}{3}$.
        Choose $N(\delta)$ as in the statement of Lemma \ref{lemma:GoodRatios} with $\delta/(1 + \delta)$ in place of $\delta$.
        Applying Lemma \ref{lemma:GoodRatios} as in \eqref{eq:GoodScales}, there is some $\tilde{\epsilon} = \tilde{\epsilon}(N) \in (0,1)$ such that for each $\epsilon \in (0, \tilde{\epsilon})$, there is some $1 \leq n \leq 3N$ such that
        \begin{align*}
          \cR_{\epsilon}^{(n)} \
          &\coloneqq \ \left\{r \in (\epsilon^{1 - \zeta(n)}, \epsilon^{1 - \zeta_{+}(n)}) \cap \{8^j\}_{j \in \ZZ} : \frac{1}{1+ \delta} < \ratios < 1 + \delta \right\}
        \end{align*}
        has size $\geq \frac{1}{3} \#((\epsilon^{1 - \zeta(n)}, \epsilon^{1 - \zeta_{+}(n)}) \cap \{8^j\}_{j \in \ZZ})$.

        Fix $\epsilon_0 \in (0, \tilde{\epsilon})$ and inductively define $\epsilon_k$ for $k \geq 1$ as follows.
        If $\epsilon_{k-1} \geq \tilde{\epsilon}$, then let $\epsilon_k \coloneqq \epsilon_{k-1}$.
        If $\epsilon_{k-1} \in (0, \tilde{\epsilon})$, then choose $1 \leq n_k \leq 3N$ such that $\# \cR_{\epsilon_{k-1}}^{(n_k)} \geq \frac{1}{3} \#((\epsilon_{k-1}^{1 - \zeta(n_k)}, \epsilon_{k-1}^{1 - \zeta_{+}(n_k)}) \cap \{8^j\}_{j \in \ZZ})$, fix $r_k \in \cR_{\epsilon_{k-1}}^{(n_k)}$, and put $\epsilon_k \coloneqq \epsilon_{k-1}/r_k$.
        Let $K \coloneqq \inf\{k \geq 1 : \epsilon_k \geq \tilde{\epsilon}\}$, so then for all $1 \leq k \leq K$,
        \begin{align*}
          \left(1 + \delta \right)^{-1} r_k^{\xi Q - 1} \
          &\leq \ \frac{\fa_{\epsilon_k}}{\fa_{\epsilon_{k-1}}} \
          \leq \ \left(1 + \delta \right) r_k^{\xi Q - 1}.
        \end{align*}
        Multiply over all $1 \leq k \leq K$, then rearrange to get
        \begin{align}
          \fa_{\epsilon_K} \left(1 + \delta \right)^{-K} \left(\prod_{k=1}^{K} r_k \right)^{1 - \xi Q} \
          &\leq \ \fa_{\epsilon_0} \
          \leq \ \fa_{\epsilon_K} \left(1 + \delta \right)^K \left(\prod_{k=1}^{K} r_k \right)^{1 - \xi Q}.
          \label{eq:InitialLFPPScalingConstantsBound}
        \end{align}

        Now note that for $1 \leq k \leq K$,
        \begin{align*}
          \epsilon_k \
          &> \ \epsilon_0^{\zeta_{+}(n_1) \cdots \zeta_{+}(n_k)} \
          > \ \epsilon_0^{\zeta_{+}(3N)^k}.
        \end{align*}
        So 
        \begin{align*}
          K \
          &\leq \ \inf\left\{k \geq : \epsilon_0^{\zeta_{+}(3N)^{k}} \geq \tilde{\epsilon} \right\} \
          \leq \ \frac{\log \log \epsilon_0^{-1} - \log \log \tilde{\epsilon}^{-1}}{\log \zeta_{+}(3N)^{-1}} + 1.
        \end{align*}
        Therefore,
        \begin{align}
          \left(1 + \delta \right)^K \
          &\leq \ \exp\left\{\log(1 + \delta) \left(\frac{\log \log \epsilon_0^{-1} - \log \log \tilde{\epsilon}^{-1}}{\log \zeta_{+}(3N)^{-1}} + 1 \right)\right\} \
          \leq \ C \left(\log \epsilon_0^{-1} \right)^{\frac{\log(1 + \delta)}{\log \zeta_{+}(3N)^{-1}}}, \label{eq:1PlusDeltaUpperBound}
        \end{align}
        where
        \begin{align*}
          C \
          &\coloneqq \ \exp\left\{-\frac{\log(1 + \delta) \log \log \tilde{\epsilon}^{-1}}{\log \zeta_{+}(3N)^{-1}} + \log(1 + \delta) \right\}
        \end{align*}
        is a $\delta$-dependent constant.
        Furthermore, since
        \begin{align*}
          \epsilon_K \
          &= \ \frac{\epsilon_{K-1}}{r_K} \
          = \ \frac{\epsilon_{K-2}}{r_{K-1} r_K} \
          = \ \cdots \
          = \ \frac{\epsilon_0}{r_1 r_2 \cdots r_K},
        \end{align*}
        and also
        \begin{align}
          \tilde{\epsilon} \
          &\leq \ \epsilon_K \
          \leq \ \epsilon_{K-1}^{\zeta(n_{K-1})} \
          < \ \tilde{\epsilon}^{\zeta(n_{K-1})} \
          < \ 1,
          \label{eq:KthStepBound}
        \end{align}
        it follows that
        \begin{align}
          \left(\prod_{k=1}^{K} r_k \right)^{1 - \xi Q} \
          &= \ \left(\frac{\epsilon_0}{\epsilon_K} \right)^{1 - \xi Q} \
          \in \ \left[\epsilon_0^{1 - \xi Q} , \tilde{\epsilon}^{\xi Q - 1} \epsilon_0^{1 - \xi Q} \right].
          \label{eq:ProductOfRadii}
        \end{align}
        Combining \eqref{eq:InitialLFPPScalingConstantsBound}, \eqref{eq:1PlusDeltaUpperBound}, and \eqref{eq:ProductOfRadii}, we arrive at
        \begin{align*}
          \fa_{\epsilon_K} C^{-1} \left(\log \epsilon_0^{-1} \right)^{-\frac{\log(1 + \delta)}{\log \zeta_{+}(3N)^{-1}}} \epsilon_0^{1 - \xi Q} \
          &\leq \ \fa_{\epsilon_0} \
          \leq \ \fa_{\epsilon_K} C \left(\log \epsilon_0^{-1} \right)^{\frac{\log(1 + \delta)}{\log \zeta_{+}(3N)^{-1}}} \epsilon_0^{1 - \xi Q} \tilde{\epsilon}^{\xi Q - 1}.
        \end{align*}
        Note that $\frac{\log(1 + \delta)}{\log \zeta_{+}(3N)^{-1}} \leq b$.
        Since $\tilde{\epsilon}$ depends only on $\delta = e^b - 1$, $\epsilon \mapsto \fa_{\epsilon}$ is bounded above and below by positive $(\xi, b)$-dependent constants on $[\tilde{\epsilon},1]$.
        Since $\epsilon_K \in [\tilde{\epsilon}, 1]$ by \eqref{eq:KthStepBound}, we get
        \begin{align*}
          \tilde{C}^{-1} \left(\log \epsilon_0^{-1} \right)^{-b} \epsilon_0^{1 - \xi Q} \
          &\leq \ \fa_{\epsilon_0} \
          \leq \ \tilde{C} \left(\log \epsilon_0^{-1} \right)^b \epsilon_0^{1 - \xi Q}
        \end{align*}
        for some $(\xi,b)$-dependent constant $\tilde{C} > 0$.
      \end{proof}

    \subsection{LQG Scaling Formula}
      We will conclude by using the exact scaling formula \eqref{eq:LFPPScaling} to deduce Theorem \ref{thm:LQGScaling}.
  
      \begin{proof}[Proof of Theorem \ref{thm:LQGScaling}]
        By \eqref{eq:LFPPScaling},
        \begin{align}
          \LFPP(az+b, aw+b) \
          &= \ \frac{|a| \fa_{\epsilon}^{-1}}{|a|^{\xi Q} \fa_{\epsilon/|a|}^{-1}} \LFPP[\epsilon/|a|][h(a \cdot + b) + Q \log |a|](z,w).
          \label{eq:LFPPScalingRedux}
        \end{align}
        Now let $A$ be the almost sure event that $\LFPP \to D_h$ locally uniformly.
        Corollary 1.11 from \cite{ExistenceAndUniqueness} together with \eqref{eq:LFPPScalingRedux} implies that on $A$, $\LFPP[\epsilon/|a|][h(a \cdot + b) + Q \log |a|]$ also converges locally uniformly with limit $D_h(a z + b, aw + b)$ for \textit{every} $a \in \CC \setminus \{0\}$ and $b \in \CC$.
        So we can define $D_{h(a \cdot + b) + Q \log|a|}$ to equal $D_h(a \cdot + b, a \cdot + b)$ on $A$, and define it arbitrarily on $A^c$.
      \end{proof}

  \clearpage
  \nocite{*} 
  \printbibliography
\end{document}